\newtheorem{theorem}{Theorem}
\newtheorem*{theorem*}{Theorem}
\numberwithin{theorem}{section}
\newtheorem{corollary}[theorem]{Corollary}
\newtheorem{lemma}[theorem]{Lemma}
\newtheorem{proposition}[theorem]{Proposition}
\theoremstyle{definition}
\newtheorem{definition}[theorem]{Definition}
\newtheorem{remark}[theorem]{Remark}
\newcommand{\ord}{\textsf{Ord}}
\newcommand{\nat}{\textsf{Nat}}
\newcommand{\en}{\operatorname{en}}
\newcommand{\cl}{\operatorname{Cl}}
\newcommand{\cls}{\operatorname{cl}}
\newcommand{\tr}{\operatorname{Tr}}
\newcommand{\supp}{\operatorname{supp}}
\newcommand{\rng}{\operatorname{rng}}
\title[Patterns of resemblance and Bachmann-Howard fixed points]{Patterns of resemblance and\\ Bachmann-Howard fixed points}
\author{Anton Freund}
\begin{document}

\begin{abstract}
Timothy Carlson's patterns of resemblance employ the notion of $\Sigma_1$-elementarity to describe large computable ordinals. It has been conjectured that a relativization of these patterns to dilators leads to an equivalence with $\Pi^1_1$-comprehension (Question~27 of A.~Montalb\'an's ``Open questions in reverse mathematics", Bull.~Symb.~Log.~17(3)2011, 431-454). In the present paper we prove this conjecture. The crucial direction of the equivalence (towards $\Pi^1_1$-comprehension) is reduced to a previous result of the author, which is concerned with relativizations of the Bachmann-Howard ordinal.
\end{abstract}

\subjclass[2010]{03B30, 03D60, 03E10, 03F15}
\keywords{Patterns of resemblance, $\Sigma_1$-elementarity, $\Pi^1_1$-comprehension, Admissible sets, Bachmann-Howard ordinal, Dilators}

\maketitle

\section{Introduction}

In their simplest form, T.~Carlson's patterns of resemblance are defined as follows (cf.~\cite[Section~10]{carlson-resemblance}): Consider the language $\mathcal L=\{\leq,\leq_1\}$ with two binary relation symbols. We only interpret this language in structures that have a set of ordinal numbers as universe, and $\leq$ is always interpreted as the usual order between \mbox{ordinals}. Let us agree that each ordinal is identified with its set of predecessors. We now determine the interpretation of $\leq_1$ by the recursive clause
\begin{equation*}
\alpha\leq_1\beta\quad:\Leftrightarrow\quad\text{$\alpha$ is a $\Sigma_1$-elementary $\mathcal L$-substructure of~$\beta$}.
\end{equation*}
The right side is equivalent to the conjunction of $\alpha\leq\beta$ and the following: For all finite $X\subseteq\alpha$ and $Y\subseteq\beta\backslash\alpha$, there is a $\widetilde Y\subseteq\alpha$ and a bijection $f:X\cup Y\to X\cup\widetilde Y$ that fixes~$X$ and satisfies $f(\gamma)\leq f(\gamma')\Leftrightarrow\gamma\leq\gamma'$ and $f(\gamma)\leq_1f(\gamma')\Leftrightarrow\gamma\leq_1\gamma'$ for all $\gamma,\gamma'\in X\cup Y$. Note that the given condition does only depend on the restriction of~$\leq_1$ to~$\beta\times\beta$, which can thus be defined by recursion on~$\beta$.

Patterns of resemblance are attractive due to their connections with several different areas: Carlson first used them to show that epistemic arithmetic is consistent with the statement ``I know that I am a Turing machine" (known as Reinhardt's strong mechanistic thesis, see~\cite{carlson-reinhardt}). They also offer a new approach to the large computable ordinals considered in ordinal analysis, including a conjectured characterization of the proof-theoretic ordinal of full second order arithmetics (see~\cite[Section~13]{carlson-resemblance}). This viewpoint has been clarified and expanded in subsequent work of G.~Wilken (see~\cite{wilken-bachmann-howard,wilken-assignments,wilken-skolem-hull,wilken-order2} and the joint paper~\cite{wilken-carlson-normal-form} with Carlson). Furthermore, Carlson has pointed out similarities with the core models studied in set theory, and expressed the hope that proof-theoretic and set-theoretic approaches ``will find common ground someday" (see again~\cite[Section~13]{carlson-resemblance}). The present paper establishes connections with weak set theories and reverse mathematics (a research program developed by H.~Friedman~\cite{friedman-icm-rm} and S.~Simpson, cf.~his textbook~\cite{simpson09}).

For the version of patterns that we have presented above, the smallest ordinal~$\alpha$ such that $\alpha\leq_1\beta$ holds for all $\beta\geq\alpha$ is equal to $\varepsilon_0=\min\{\gamma\,|\,\omega^\gamma=\gamma\}$, the proof-theoretic ordinal of Peano arithmetic (see~\cite[Section~10]{carlson-resemblance} for a proof and~\cite[Chapter~2]{takeuti-proof-theory} for general background). The ordinals that arise become much larger if we enrich the language: Let $\leq_1^+$ be defined just as $\leq_1$, but with $\mathcal L=\{\leq,\leq_1\}$ replaced by $\mathcal L_+=\{0,+\leq,\leq_1^+\}$, for a constant~$0$ and a ternary relation symbol~$+$ that is interpreted as the graph of ordinal addition. Now the minimal~$\alpha$ with $\alpha\leq_1^+\beta$ for all $\beta\geq\alpha$ is equal to the proof-theoretic ordinal of $\Pi^1_1\textsf{-CA}_0$ (as announced in~\cite{carlson-resemblance} and proved in~\cite{wilken-skolem-hull}), which dwarfs~$\varepsilon_0$. Carlson himself has suggested (see~\cite[Section~1]{carlson-resemblance} and also~\cite[Section~4.5]{montalban-open-problems}) to consider much more general extensions of the language that arise from dilators. This will be done in the present paper.

In the following we present constructions that are needed to explain our main result. The discussion will remain on a somewhat informal level, with full details deferred to Section~\ref{sect:relativize-patterns} below. Let us consider the category of ordinals (still identified with their sets of predecessors) and strictly increasing functions between them. Dilators, as defined by J.-Y.~Girard~\cite{girard-pi2}, are functors from ordinals to ordinals that preserve direct limits and pullbacks. These conditions ensure that each dilator~$D$ is determined by a set, namely, by its restriction to the subcategory of finite ordinals. More specifically, each ordinal $\gamma<D(\alpha)$ has a unique representation of the form $(\sigma;\gamma_0,\dots,\gamma_{n-1};\alpha)_D$ with $\gamma_0<\dots<\gamma_{n-1}<\alpha$, where $\sigma\in D(n)$ can be seen as a constructor symbol that takes $\gamma_0,\dots,\gamma_{n-1}$ and $\alpha$ as arguments (but not all $\sigma\in D(n)$ are allowed as constructors, cf.~Section~\ref{sect:relativize-patterns}). In general, the representation depends on~$\alpha$: for ordinals $\alpha<\beta$, the same $\gamma<D(\alpha)\leq D(\beta)$ can have entirely different representations with respect to~$\alpha$ and to~$\beta$. The reason is that the inclusion $\iota:\alpha\hookrightarrow\beta$ may induce a function $D(\iota):D(\alpha)\to D(\beta)$ that is not an inclusion itself (i.\,e.,~the range of~$D(\iota)$ need not be an initial segment of~$D(\beta)$). On the other hand, the representations are independent of~$\alpha$ when $D$ preserves inclusions. Dilators with this property are called flowers by Girard. We will work with a slightly more restrictive notion called normal dilator (previously studied in~\cite{freund-rathjen_derivatives}), which blends Girard's flowers with P.~Aczel's normal functors~\cite{aczel-phd,aczel-normal-functors}. If~$D$ is a normal dilator, then $\alpha\mapsto D(\alpha)$ is a normal function in the usual sense, i.\,e., it is strictly increasing and we have $D(\lambda)=\sup_{\alpha<\lambda}D(\alpha)$ whenever $\lambda$ is a limit ordinal. More importantly, the aforementioned representations become independent of the last component: assuming that~$D$ is normal, we write
\begin{equation}\label{eq:representations}
\gamma\simeq(\sigma;\gamma_0,\dots,\gamma_{n-1})_D
\end{equation}
if $\gamma$ has representation $(\sigma;\gamma_0,\dots,\gamma_{n-1};\alpha)_D$ for some (equivalently: for every) ordinal~$\alpha$ with $\gamma<D(\alpha)$. Now consider~(\ref{eq:representations}) as an $(n+1)$-ary relation in~$\gamma_0,\dots,\gamma_{n-1}$ and $\gamma$. Let~$\mathcal L_D$ be the language that contains a relation symbol for each such relation (i.\,e.,~for each constructor symbol associated with~$D$), as well as two binary relation symbols $\leq$ and~$\leq_1^D$. The interpretation of $\leq_1^D$ is defined as the interpretation of~$\leq_1$ above, but with $\mathcal L_D$ at the place of~$\mathcal L$. We now state our main result:

\begin{theorem}\label{thm:pattern-admissibles}
The following are equivalent over~$\textsf{ATR}_0^{\textsf{set}}$:
\begin{enumerate}[label=(\roman*)]
\item for any normal dilator~$D$, there is an ordinal~$\Omega$ with $\Omega\leq_1^D D(\Omega+1)$,
\item every set is contained in an admissible set.
\end{enumerate}
\end{theorem}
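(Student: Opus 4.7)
\emph{Strategy.} I would establish the two implications separately. For (ii)~$\Rightarrow$~(i) a direct construction in admissible sets suffices, while (i)~$\Rightarrow$~(ii) is to be reduced to the author's previous result identifying the existence of Bachmann-Howard fixed points for all normal dilators with principle~(ii) over $\mathsf{ATR}_0^{\mathsf{set}}$.

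\emph{The direction (ii)~$\Rightarrow$~(i).} Given a normal dilator~$D$, I would locate a code for $D$ inside an admissible set~$\mathbb{A}$ and set $\Omega := \mathbb{A}\cap\ord$. Normality combined with the closure of $\mathbb{A}$ under the $\Sigma$-recursive operation $\alpha\mapsto D(\alpha)$ yields $D(\Omega)=\sup_{\alpha<\Omega}D(\alpha)=\Omega$. To verify the elementarity condition, fix finite $X\subseteq\Omega$ and $Y\subseteq D(\Omega+1)\setminus\Omega$. Each $\gamma\in Y$ has a normal form $(\sigma;\gamma_0,\dots,\gamma_{n-1})_D$ with $\gamma_{n-1}=\Omega$, since otherwise $\gamma<D(\Omega)=\Omega$. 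One then builds $\widetilde Y$ by uniformly substituting $\Omega$ by a single ordinal $\widetilde\Omega<\Omega$ chosen to reflect the finitely many order- and $\leq_1^D$-facts relating the parameters of $X\cup Y$; the existence of such $\widetilde\Omega$ follows from $\Sigma$-reflection in~$\mathbb{A}$.

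\emph{The direction (i)~$\Rightarrow$~(ii), and the main obstacle.} Given a normal dilator~$D$, I would construct an auxiliary normal dilator $D^\bullet$ whose constructors extend those of $D$ with additional symbols forcing the intended collapse. Applying hypothesis~(i) to $D^\bullet$ delivers an ordinal $\Omega$ with $\Omega\leq_1^{D^\bullet} D^\bullet(\Omega+1)$; the elementarity witnesses---the isomorphic copies $\widetilde Y\subseteq\Omega$ of finite $Y\subseteq D^\bullet(\Omega+1)\setminus\Omega$---are to be assembled into a Bachmann-Howard collapsing function $\psi:D(\Omega)\to\Omega$, where $\psi(\gamma)$ is the canonical copy below $\Omega$ of the term $\gamma$ over its support in $\Omega$. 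Verifying the Bachmann-Howard axioms for $\psi$ and invoking the author's previous theorem then completes the reduction. The main obstacle is the design of $D^\bullet$ together with the proof that the collapse extracted from the elementarity data is well-defined (independent of the auxiliary sets $X,Y$), coherent across all finite subsets of $D(\Omega)$, and injective to the extent required; at the same time $D^\bullet$ must itself be a normal dilator, so that hypothesis~(i) can legitimately be applied to it. Balancing these competing requirements is where the bulk of the technical work will lie.
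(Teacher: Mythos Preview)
Your plan for (i)~$\Rightarrow$~(ii) contains a genuine conceptual error. You write that the reduction target is ``the author's previous result identifying the existence of Bachmann-Howard fixed points for all \emph{normal} dilators with principle~(ii)'', and accordingly you begin with a normal dilator~$D$. But Theorem~\ref{thm:from-freund-equivalence} concerns \emph{arbitrary} dilators, and its strength comes precisely from the non-normal ones: for a normal dilator the function $\alpha\mapsto D(\alpha)$ is normal, so any limit fixed point~$\lambda=D(\lambda)$ carries a trivial Bachmann-Howard collapse $\vartheta(\gamma)=D(\gamma+1)$ (this is Remark~3.1 in the paper). Hence showing that every normal dilator has a Bachmann-Howard fixed point yields nothing close to~(ii). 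The paper's argument starts instead from an arbitrary dilator~$D$ and manufactures a normal dilator $\Sigma D$ with $\Sigma D(\alpha)=\sum_{\gamma<\alpha}(1+D(\gamma))$; the point of this construction is exactly to bridge the gap between hypothesis~(i), which only speaks about normal dilators, and Theorem~\ref{thm:from-freund-equivalence}, which needs the full class. Your auxiliary~$D^\bullet$ may well have been intended to play a similar role, but as stated your starting hypothesis is too weak to reach~(ii).

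For (ii)~$\Rightarrow$~(i), your outline has the right shape (set $\Omega=\mathbb A\cap\ord$, use $D(\Omega)=\Omega$, replace the top coordinate~$\Omega$ by some $\widetilde\Omega<\Omega$), but the appeal to a single $\Sigma$-reflection step underestimates the difficulty. The sets~$Y$ may contain several ordinals $\alpha<\beta$ in $D(\Omega+1)\setminus\Omega$ with $\alpha\leq_1^D\beta$, and these internal $\leq_1^D$-facts must be preserved by the reflection---yet $\leq_1^D$ is defined by a recursion that reaches above~$\Omega$, so it is not available as a predicate inside~$\mathbb A$. The paper handles this by an induction on~$\gamma$ from $\Omega$ to $D(\Omega+1)$, showing that each class $C_D(\gamma)=\{\delta\,|\,\delta\geq\gamma^*\text{ and }\delta\leq_1^D\gamma[\delta]\}$ meets~$\Omega$ in a $\Delta$-definable club; the successor step uses that limit points of clubs form clubs, and the limit step uses a diagonal-intersection argument, both carried out via $\Sigma$-collection in~$\mathbb A$. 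A direct one-shot reflection as you suggest would need to be substantially fleshed out to cover these cases.
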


The theory $\textsf{ATR}_0^{\textsf{set}}$ is a set-theoretic version of $\textsf{ATR}_0$ (one of the central systems from reverse mathematics), over which it is conservative (due to Simpson~\cite{simpson82,simpson09}). We declare that $\textsf{ATR}_0^{\textsf{set}}$ contains the axiom of countability (as in~\cite[\mbox{Section~VII.3}]{simpson09}, while this axiom is marked as ``optional" in~\cite{simpson82}). Concerning statement~(ii), we recall that admissible sets are defined as transitive models of Kripke-Platek set theory (see~\cite{barwise-admissible}). Over~$\textsf{ATR}_0^{\textsf{set}}$, statement~(ii) is equivalent to $\Pi^1_1$-comprehension, another important principle of reverse mathematics (by~\cite[Section~7]{jaeger-admissibles} in conjunction with~\cite[Section~1.4]{freund-thesis}). This shows that our base theory is weak enough to make the equivalence informative (since $\Pi^1_1$-comprehension is known to be unprovable in~$\textsf{ATR}_0$). It also reveals that Theorem~\ref{thm:pattern-admissibles} answers Question~27 from A.~Montalb\'an's list~\cite{montalban-open-problems}, which asks for an equivalence between statement~(i) and $\Pi^1_1$-comprehension.

The motivation for our choice of base theory will become fully transparent in Section~\ref{sect:relativize-patterns}. For now, we just stress that $\textsf{ATR}_0^{\textsf{set}}$ is a set theory. This allows us to work with the ``semantic" definition of~$\leq_1^D$ that was given above. Patterns of resemblance can also be approached in a more ``syntactic" way (cf.~\cite{wilken-assignments,wilken-carlson-normal-form}). Based on such an approach, one may be able to prove a variant of Theorem~\ref{thm:pattern-admissibles} with a much weaker base theory in the language of second order arithmetic. This would certainly be of interest. For the present paper, we have decided that the elegance of the semantic definition is more important than an optimal base theory.

To prove one direction of Theorem~\ref{thm:pattern-admissibles}, we will show that~(i) holds when~$\Omega$ is the height of a suitable admissible set, as provided by~(ii). Details for this direction are given in Section~\ref{sect:admissible-to-elementarity}. The converse (and probably more surprising) direction from~(i) to~(ii) relies on a previous result of the author~\cite{freund-equivalence,freund-categorical,freund-computable}. For this result it is crucial to consider dilators that are not normal. If $D$ is such a dilator, there may not be any fixed point~$\alpha=D(\alpha)$. The best we can hope for is an ``almost" order preserving function $\vartheta:D(\alpha)\to\alpha$. To make this precise, we recall that each $\gamma<D(\alpha)$ has a unique representation $(\sigma;\gamma_0,\dots,\gamma_{n-1};\alpha)_D$ with $\gamma_0<\dots<\gamma_{n-1}<\alpha$. We will write $\supp_\alpha(\gamma)=\{\gamma_0,\dots,\gamma_{n-1}\}$. Now a function $\vartheta:D(\alpha)\to\alpha$ is called a Bachmann-Howard collapse if the following holds for all~$\gamma,\delta<D(\alpha)$:
\begin{itemize}[label={--}]
\item if we have $\gamma<\delta$ and $\supp_\alpha(\gamma)\subseteq\vartheta(\delta)$, then we have $\vartheta(\gamma)<\vartheta(\delta)$,
\item we have $\supp_\alpha(\gamma)\subseteq\vartheta(\gamma)$.
\end{itemize}
If such a collapse exists, then $\alpha$ is called a Bachmann-Howard fixed point of~$D$. This notion can be seen as a relativization of the Bachmann-Howard ordinal, which plays an important role in ordinal analysis (see e.\,g.~\cite{jaeger-kripke-platek,rathjen-weiermann-kruskal,FRW-kruskal}). In~\cite{freund-equivalence} it has been shown that statement~(ii) from Theorem~\ref{thm:pattern-admissibles} is equivalent to the assertion that every dilator has a Bachmann-Howard fixed point. We will show that this assertion follows from statement~(i), so that the latter implies~(ii). Let us sketch the argument, which will be worked out in Section~\ref{sect:elementarity-to-Bachmann-Howard}: The first step is to transform~$D$ into a normal dilator~$\Sigma D$, which can be characterized by $\Sigma D(\alpha):=\Sigma_{\gamma<\alpha}D(\gamma)$. Invoking statement~(i), we now fix an ordinal number $\Omega\leq_1^{\Sigma D}\Sigma D(\Omega+1)$. We get an embedding $\xi:D(\Omega)\to\Sigma D(\Omega+1)$ by setting $\xi(\gamma):=\Sigma D(\Omega)+\gamma$. It is not hard to see that $\Omega\leq_1^{\Sigma D}\Sigma D(\Omega+1)$ yields $\Omega\leq_1^{\Sigma D}\xi(\gamma)$, for an arbitrary~$\gamma<D(\Omega)$. Assume that we have $\xi(\gamma)\simeq(\sigma;\gamma_0,\dots,\gamma_{n-1})_{\Sigma D}$. We will see that $\Sigma D(\Omega)\leq\xi(\gamma)<\Sigma D(\Omega+1)$ entails $n>0$ and $\gamma_{n-1}=\Omega$. Hence $\eta=\Omega$ witnesses that the $\Sigma_1$-formula
\begin{equation*}
\exists\eta\,[(\gamma_0<\eta\land\ldots\land\gamma_{n-2}<\eta)\land\eta\leq_1^{\Sigma D}(\sigma;\gamma_0,\dots,\gamma_{n-2},\eta)_{\Sigma D}]
\end{equation*}
holds in $\Sigma D(\Omega+1)$. Due to $\Omega\leq_1^{\Sigma D}\Sigma D(\Omega+1)$, the same formula must hold in~$\Omega$. Still assuming $\xi(\gamma)\simeq(\sigma;\gamma_0,\dots,\gamma_{n-1})_{\Sigma D}$, this allows us to set
\begin{equation*}
\vartheta(\gamma):=\min\{\eta<\Omega\,|\,\{\gamma_0,\dots,\gamma_{n-2}\}\subseteq\eta\text{ and }\eta\leq_1^{\Sigma D}(\sigma;\gamma_0,\dots,\gamma_{n-2},\eta)_{\Sigma D}\}.
\end{equation*}
We will see that this defines a Bachmann-Howard collapse~$\vartheta:D(\Omega)\to\Omega$, as needed to derive~(ii) via the result from~\cite{freund-equivalence}. The construction of Bachmann-Howard fixed points via $\Sigma_1$-elementarity is interesting in its own right: it sheds further light on the collapsing construction that is so central to ordinal analysis.

\section{Normal dilators and patterns of resemblance}\label{sect:relativize-patterns}

In this section we recall the notion of (normal) dilator and its formalization in weak set theories. We then show how patterns of resemblance can be relativized to a normal dilator, making the discussion from the introduction precise.

Unless otherwise noted, the base theory for all definitions and results is~$\textsf{ATR}_0^{\operatorname{set}}$. Let us point out that Simpson gives two somewhat different but equivalent axiomatizations in~\cite{simpson82} and~\cite[Section~VII.3]{simpson09} (see the comparison in~\cite[Section~1.4]{freund-thesis}). For our purpose, the following facts will be central: First, $\textsf{ATR}_0^{\textsf{set}}$ includes axiom beta, which states that any well-founded relation can be collapsed to set-membership. In our context, this will ensure that the values of dilators exist as actual ordinals (which arise by collapsing certain term representation systems, cf.~Definition~\ref{def:dil-ordinals}). Secondly, the theory $\mathsf{ATR}_0^{\textsf{set}}$ shows that all primitive recursive set functions (in the sense of R.~Jensen and C.~Karp~\cite{jensen-karp}) are total. This will, in particular, allow us to define the restriction of~$\leq_1^D$ to $\beta\times\beta$ by recursion on the ordinal~$\beta$. Finally, as agreed in the introduction, we work with the version of $\mathsf{ATR}_0^{\textsf{set}}$ that includes the axiom of countability. This axiom asserts that any set admits an injection into the finite ordinals (equivalently, a surjection in the converse direction, provided we are concerned with a non-empty set). It ensures that statement~(ii) in Theorem~\ref{thm:pattern-admissibles} is equivalent to $\Pi^1_1$-comprehension over the natural numbers. Even more importantly, it is used in the prove of a previous result~\cite{freund-equivalence}, to which Theorem~\ref{thm:pattern-admissibles} will be reduced. While the countability assumption may be unusual in the context of set theory, it is very natural from the viewpoint of reverse mathematics.

Let us write $\ord$ for the category of ordinals (identified with their sets of predecessors) and strictly increasing functions between them. By $\nat$ we denote the full subcategory of finite ordinals (natural numbers). We will write $[\cdot]^{<\omega}$ for the finite subset functor on the category of sets, with
\begin{align*}
[X]^{<\omega}&:=\text{the set of finite subsets of~$X$},\\
[f]^{<\omega}(a)&:=\{f(x)\,|\,x\in X\}\quad\text{(for $f:X\to Y$ and $a\in[X]^{<\omega}$)}.
\end{align*}
We will also apply~$[\cdot]^{<\omega}$ to ordinals, omitting the forgetul functor to the category of sets. Conversely, we often assume that sets of ordinals are equipped with the usual order. For $n\in\nat$, the set $[n]^{<\omega}$ is, of course, the full powerset of $n=\{0,\dots,n-1\}$. The following definition does not quite coincide with the ones in~\cite[Section~4.4]{girard-pi2} and~\cite[Section~2]{freund-computable}. However, the resulting notion of dilator will be equivalent (cf.~the proof of Theorem~\ref{thm:from-freund-equivalence}). Note that $\rng(g)$ denotes the range (image) of~$g$.

\begin{definition}\label{def:pre-dilator}
A pre-dilator consists of a functor $D:\nat\to\ord$ and a natural transformation $\supp:D\Rightarrow[\cdot]^{<\omega}$, such that
\begin{equation}\tag{``support condition"}
\supp_n(\sigma)\subseteq\rng(f)\quad\Rightarrow\quad\sigma\in\rng(D(f))
\end{equation}
holds for any morphism $f:m\to n$ in $\nat$ and any element $\sigma\in D(n)$.
\end{definition}

In~\cite[Definition~2.1]{freund-computable}, the support condition was only required for the unique morphism $f_\sigma:m_\sigma\to n$ with range $\supp_n(\sigma)$, where $m_\sigma$ is determined as the cardinality of that set. This does not make a difference, as any morphism $f:m\to n$ with $\supp_n(\sigma)\subseteq\rng(f)$ allows for a factorization $f_\sigma=f\circ g$ with $g:m_\sigma\to m$. Also note that the converse of the support condition follows from naturality, namely
\begin{equation*}
\supp_n(D(f)(\sigma_0))=[f]^{<\omega}(\supp_m(\sigma_0))\subseteq\rng(f).
\end{equation*}
In second-order set theory we would be able to define class-sized dilators as functors from ordinals to ordinals that preserve pullbacks and direct limits. The last two conditions are equivalent to the existence of (necessarily unique) supports as in Definition~\ref{def:pre-dilator} (see~\cite[Remark~2.2.2]{freund-thesis}). Hence the restriction of a class-sized dilator to~$\nat$ is a pre-dilator in the sense of Definition~\ref{def:pre-dilator}. To get back the class-sized dilator from its restriction, it suffices to extend the latter by direct limits (see~\cite[\mbox{Proposition~2.1}]{freund-computable}). Indeed, any pre-dilator in the sense of Definition~\ref{def:pre-dilator} can be extended in this way. The extension will automatically preserve pullbacks and direct limits, but in general it will be a functor into linear orders rather than ordinals. On the other hand, the requirement that all values are well-founded (and hence isomorphic to unique ordinals) can be expressed in the usual first-order language of set theory. This shows that the second-order viewpoint is not required after all. To describe the extension of pre-dilators in our base theory, we will use the following.

\begin{definition}
The trace of a pre-dilator~$D=(D,\supp)$ is given by
\begin{equation*}
\tr(D)=\{(\sigma,n)\,|\,n\in\nat\text{ and }\sigma\in D(n)\text{ with }\supp_n(\sigma)=n\}.
\end{equation*}
\end{definition}

Note that $\nat$ is a small category equivalent to the large (but locally small) category of all finite linear orders. To make the equivalence explicit, we write $|a|$ and $\en_a:|a|\to a$ for the cardinality and the strictly increasing enumeration of a finite order~$a$. If $f:a\to b$ is a strictly increasing function between finite orders, we write $|f|:|a|\to|b|$ for the unique morphism in~$\nat$ that satisfies $\en_b\circ|f|=f\circ\en_a$. Given an order~$X$ and a suborder~$Y$, we will write $\iota_X^Y:X\hookrightarrow Y$ for the inclusion. The following coincides with~\cite[Definition~2.2]{freund-computable}.

\begin{definition}\label{def:dil-extend}
Given a pre-dilator~$D$ and an ordinal~$\alpha$, we define $\overline D(\alpha)$ as the set of all expressions $(\sigma;\gamma_0,\dots,\gamma_{n-1};\alpha)_D$ for an element $(\sigma,n)\in\tr(D)$ and ordinals $\gamma_0<\dots<\gamma_{n-1}<\alpha$. To get a binary relation $<_{\overline D(\alpha)}$ on $\overline D(\alpha)$, we declare that
\begin{equation*}
(\sigma;\gamma_0,\dots,\gamma_{m-1};\alpha)_D<_{\overline D(\alpha)}(\tau;\delta_0,\dots,\delta_{n-1};\alpha)_D
\end{equation*}
is equivalent to $D(|\iota_c^{c\cup d}|)(\sigma)<_{D(|c\cup d|)}D(|\iota_d^{c\cup d}|)(\tau)$ with $c=\{\gamma_0,\dots,\gamma_{m-1}\}$ and $d=\{\delta_0,\dots,\delta_{n-1}\}$ (where $c\cup d$ carries the usual order between ordinals). For a morphism $f:\alpha\to\beta$ of~$\ord$, we set
\begin{equation*}
\overline D(f)\left((\sigma;\gamma_0,\dots,\gamma_{n-1};\alpha)_D\right):=(\sigma;f(\gamma_0),\dots,f(\gamma_{n-1});\beta)_D
\end{equation*}
to define a function $\overline D(f):\overline D(\alpha)\to\overline D(\beta)$.
\end{definition}

One can verify that $\overline D$ is a functor from ordinals to linear orders (with strictly increasing functions as morphisms, cf.~\cite[Lemma~2.2]{freund-computable}). Also,
\begin{equation*}
\{\delta_0,\dots,\delta_{n-1}\}\subseteq\rng(f)\quad\Rightarrow\quad(\sigma;\delta_0,\dots,\delta_{n-1};\beta)_D\in\rng(\overline D(f))
\end{equation*}
holds for any morphism $f:\alpha\to\beta$ and any element $(\sigma;\delta_0,\dots,\delta_{n-1};\beta)_D$ of~$\overline D(\beta)$. Hence $\{\delta_0,\dots,\delta_{n-1}\}$ can be seen as the support of the given element.

\begin{definition}\label{def:dilator}
A pre-dilator~$D$ is called a dilator if the linear order~$\overline D(\alpha)$ is well founded for every ordinal~$\alpha$.
\end{definition}

The following construction is possible because our base theory~$\textsf{ATR}_0^{\textsf{set}}$ includes axiom beta, which allows us to collapse well orders to ordinals. Let us point out that the action of a dilator~$D$ on objects and morphisms of~$\nat$ is already defined. The following does not lead to any conflict, since~\cite[Lemma~2.6]{freund-rathjen_derivatives} provides a family of isomorphisms $D(n)\cong\overline D(n)$ that is natural in~$n\in\nat$ and respects supports.

\begin{definition}\label{def:dil-ordinals}
Assume that~$D$ is a dilator. For each ordinal number $\alpha$, let $D(\alpha)$ and $\eta_\alpha:D(\alpha)\to\overline D(\alpha)$ be unique such that $D(\alpha)$ is an ordinal and $\eta_\alpha$ is an order isomorphism. Given a morphism $f:\alpha\to\beta$ in~$\ord$, we define $D(f):D(\alpha)\to D(\beta)$ as the unique function with $\eta_\beta\circ D(f)=\overline D(f)\circ\eta_\alpha$. By
\begin{equation*}
\supp_\alpha(\gamma):=\{\gamma_0,\dots,\gamma_{n-1}\}\quad\text{for $\eta_\alpha(\gamma)=(\sigma;\gamma_0,\dots,\gamma_{n-1};\alpha)_D$}
\end{equation*}
we define a family of functions~$\supp_\alpha:D(\alpha)\to[\alpha]^{<\omega}$.
\end{definition}

We note that $D:\ord\to\ord$ is a functor equivalent to~$\overline D$, and in fact a class-sized dilator in the sense discussed above. From a foundational viewpoint there is, nevertheless, an important difference: the map $\alpha\mapsto\overline D(\alpha)$ is a primitive recursive set function while $\alpha\mapsto D(\alpha)$, in general, is not (since primitive recursive set functions cannot collapse arbitrary well orders, cf.~\cite[Remark~2.3.7]{freund-computable}). We have mentioned that the present definition of pre-dilators is slightly different from the one in~\cite{freund-computable}. Let us verify that the following crucial result remains valid:

\begin{theorem}[{\cite{freund-equivalence,freund-computable}}]\label{thm:from-freund-equivalence}
The following are equivalent over $\textsf{ATR}_0^{\textsf{set}}$:
\begin{enumerate}[label=(\roman*)]
\item for any dilator~$D$ there is an~$\alpha\in\ord$ and a function $\vartheta:D(\alpha)\to\alpha$ such~that
\begin{enumerate}[label=(\alph*)]
\item if $\gamma<\delta<D(\alpha)$ and $\supp_\alpha(\gamma)\subseteq\vartheta(\delta)$, then $\vartheta(\gamma)<\vartheta(\delta)$,
\item we have $\supp_\alpha(\gamma)\subseteq\vartheta(\gamma)$ for all $\gamma<D(\alpha)$,
\end{enumerate}
\item every set is contained in an admissible set.
\end{enumerate}
\end{theorem}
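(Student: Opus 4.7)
The plan is to deduce the theorem directly from the analogous equivalence already established in \cite{freund-equivalence,freund-computable}. The only subtlety is that those papers work with a slightly different definition of pre-dilator, and it must be checked that the notion of dilator implicit in the present Definitions~\ref{def:pre-dilator}--\ref{def:dilator} agrees with theirs; once that is done, the theorem is a mere restatement of the earlier one.

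The discrepancy concerns the scope of the support condition. In \cite{freund-computable} it is imposed only for the canonical enumeration $f_\sigma:m_\sigma\to n$ of $\supp_n(\sigma)$, whereas Definition~\ref{def:pre-dilator} demands it for every morphism $f:m\to n$ with $\supp_n(\sigma)\subseteq\rng(f)$. My first step is to observe that these conditions are equivalent. One direction is immediate (take $f=f_\sigma$). For the other, given such an~$f$, the inclusion $\supp_n(\sigma)=\rng(f_\sigma)\subseteq\rng(f)$ yields a unique strictly increasing $g:m_\sigma\to m$ with $f\circ g=f_\sigma$; then $\sigma\in\rng(D(f_\sigma))=\rng(D(f)\circ D(g))\subseteq\rng(D(f))$, as required.

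Next I would check that the passage to the extension $\overline D$ and to the ordinal-valued functor $D:\ord\to\ord$ (Definitions~\ref{def:dil-extend} and~\ref{def:dil-ordinals}) coincides with the construction used in \cite{freund-computable}. This is immediate, since both constructions depend only on the restriction of~$D$ to $\nat$ and on its trace, neither of which is affected by the reformulation of the support condition. With the definitions thus aligned, the equivalence of (i) and (ii) over~$\textsf{ATR}_0^{\textsf{set}}$ is exactly the result proved in \cite{freund-equivalence}, and nothing further is required.

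I do not anticipate any real obstacle: the equivalence of the two support conditions is a routine categorical observation, and all the serious work has already been carried out in the cited papers. The only point that demands vigilance is that the assumptions on the base theory --- in particular axiom beta and the axiom of countability, as recalled at the start of Section~\ref{sect:relativize-patterns} --- really are what the proof in \cite{freund-equivalence} uses, which they are.
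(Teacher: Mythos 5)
Your proposal correctly identifies and resolves one of the discrepancies --- the scope of the support condition --- but that discrepancy is already dealt with in the paper's discussion immediately after Definition~\ref{def:pre-dilator} and is not the substantive issue. The paper's proof of this theorem is concerned with a different gap between the notions, which your argument passes over: in the cited papers, a (set-sized) dilator has values that are arbitrary well orders, not ordinals, whereas the present Definitions~\ref{def:pre-dilator}, \ref{def:dilator} and~\ref{def:dil-ordinals} require the values $D(\alpha)$ to literally be ordinals. Over a weak base theory these notions are genuinely different: without axiom beta one cannot in general collapse a well order to an ordinal, so there may be set-sized dilators with no isomorphic copy whose values are ordinals. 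The paper explicitly invokes axiom beta (available in $\textsf{ATR}_0^{\textsf{set}}$) to transform any set-sized dilator into an isomorphic dilator in the present sense, and then cites \cite[Proposition~2.2 and Lemma~2.5]{freund-computable} for the fact that the existence of a Bachmann-Howard collapse transfers across isomorphisms of (set-sized) dilators.

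Your claim that the passage to the ordinal-valued functor ``coincides with the construction used in \cite{freund-computable}'' and is ``immediate, since both constructions depend only on the restriction of~$D$ to $\nat$ and on its trace'' conflates two different things: the data of a dilator is indeed determined by its trace, but the statement (i) quantifies over concrete objects $D$ whose definitions differ, so one must still explain why the universally quantified statement transfers. That requires (a) that every set-sized dilator is isomorphic to a dilator in the present sense (this is where axiom beta enters) and (b) that the property in (i) is invariant under isomorphism (for which the cited lemmas are needed). You should add these two points; without them the reduction to \cite{freund-equivalence} is incomplete.
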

A function~$\vartheta$ as in statement~(i) is called a Bachmann-Howard collapse. If such a function exists, then~$\alpha$ is called a Bachmann-Howard fixed point of~$D$.
\begin{proof}
If we were to replace ``dilator" (in the sense of Definitions~\ref{def:pre-dilator} and~\ref{def:dilator} above) by ``set-sized dilator" (in the sense of~\cite[Definitions~2.1 and~2.3]{freund-computable}), then the result would hold by~\cite[Theorem~9.7]{freund-equivalence} and~\cite[Proposition~2.2]{freund-computable}. The only difference between the two notions is that the values of a set-sized dilator may be arbitrary well orders rather than ordinals. This makes the notion more general in the absence of axiom beta. Since the latter is included in our base theory~$\mathsf{ATR}_0^{\textsf{set}}$, the difference vanishes. More precisely, axiom beta allows us to transform a given set-sized dilator into an isomorphic dilator in the sense of the present paper. Also, if statement~(i) holds for a dilator~$D$, then it holds for any (set-sized) dilator that is isomorphic to~$D$, as verified in the proofs of~\cite[Proposition~2.2 and Lemma~2.5]{freund-computable}.
\end{proof}

For $\alpha<\beta$, the inclusion $\iota_\alpha^\beta:\alpha\hookrightarrow\beta$ induces a morphism $D(\iota_\alpha^\beta):D(\alpha)\to D(\beta)$. In general, the latter will not coincide with the inclusion $D(\alpha)\hookrightarrow D(\beta)$ (since the range of $\overline D(\iota_\alpha^\beta):\overline D(\alpha)\to\overline D(\beta)$ need not be an initial segment of~$\overline D(\beta)$, even though $\overline D(\iota_\alpha^\beta)$ is an inclusion by construction). When $D(\iota_\alpha^\beta)$ is not the inclusion, there is no obvious syntactical relation between the expressions $\eta_\alpha(\gamma)\in\overline D(\alpha)$ and $\eta_\beta(\gamma)\in\overline D(\beta)$ that represent the same ordinal~$\gamma<D(\alpha)\leq D(\beta)$. This turns out to be inconvenient in the context of patterns of resemblance. We will see that the issue vanishes for dilators with the following additional structure (cf.~\cite{freund-rathjen_derivatives}).

\begin{definition}
A normal (pre-)dilator consists of a (pre-)dilator~$(D,\supp)$ and a natural transformation $\mu:I\Rightarrow D$ (for the inclusion functor $I:\nat\to\ord$) with
\begin{equation*}\tag{``normality condition"}
\sigma<\mu_n(k)\quad\Leftrightarrow\quad\supp_n(\sigma)\subseteq k=\{0,\dots,k-1\}
\end{equation*}
for any $k<n\in\nat$ and $\sigma\in D(n)$.
\end{definition}

The reader may notice that all values $\mu_n(k)$ are determined by $\mu_1(0)$ (apply naturality to $\iota:1\to n$ with $\iota(0)=k$). Nevertheless, one needs to require the normality condition for all~$n\in\nat$. By combining the latter with the support condition, one learns that $D(\iota_k^n):D(k)\to D(n)$ has range~$\{\sigma\in D(n)\,|\,\sigma<\mu_n(k)\}$, which is an initial segment. We now extend this property from~$\nat$ to~$\ord$. First note that we have $\supp_1(\mu_1(0))=\{0\}=1$, since $\supp_1(\mu_1(0))=\emptyset=0$ would lead to $\mu_1(0)<\mu_1(0)$, by the normality condition. This yields $(\mu_1(0),1)\in\tr(D)$, as needed to justify the following. Concerning the last sentence of the definition, we point out that $\mu_\alpha$ is already defined for~$\alpha=n\in\nat$. No conflict arises, since the isomorphism $D(n)\cong\overline D(n)$ from~\cite[Lemma~2.6]{freund-rathjen_derivatives} sends $\mu_k(n)$ to $(\mu_1(0);k;n)_D$.

\begin{definition}\label{def:normal-extend}
Consider a normal pre-dilator~$D=(D,\mu)$. For each ordinal~$\alpha$ we define a function $\overline\mu_\alpha:\alpha\to\overline D(\alpha)$ by
\begin{equation*}
\overline\mu_\alpha(\gamma):=(\mu_1(0);\gamma;\alpha)_D.
\end{equation*}
If~$D$ is a dilator, we also define $\mu_\alpha:\alpha\to D(\alpha)$ by stipulating $\eta_\alpha\circ\mu_\alpha=\overline\mu_\alpha$.
\end{definition}

In~\cite[Proposition~2.13]{freund-rathjen_derivatives} it has been shown that $\overline\mu:I\Rightarrow\overline D$ is a natural transformation (for the inclusion~$I$ of ordinals into linear orders), and that we have
\begin{equation}\label{eq:normality-extended-overline}
(\sigma;\gamma_0,\dots,\gamma_{n-1};\alpha)_D<_{\overline D(\alpha)}\overline\mu_\alpha(\delta)\quad\Leftrightarrow\quad \gamma_i<_X \delta\text{ for all }i<n
\end{equation}
for all ordinals~$\delta<\alpha$ and any element $(\sigma;\gamma_0,\dots,\gamma_{n-1};\alpha)_D$ of $\overline D(\alpha)$. In the case where $D$ is a dilator, it follows that $\mu:I\to D$ is a natural transformation (now for the identity~$I:\ord\to\ord$) with
\begin{equation}\label{eq:normality-extended}
\gamma<\mu_\alpha(\delta)\quad\Leftrightarrow\quad\supp_\alpha(\gamma)\subseteq\delta
\end{equation}
for all $\gamma<D(\alpha)$ and $\delta<\alpha$. If $\alpha$ is a limit, this entails that $\{\mu_\alpha(\delta)\,|\,\delta<\alpha\}$ is cofinal in~$D(\alpha)$ (since the supports are finite). To conclude that $\alpha\mapsto D(\alpha)$ is a normal function in the usual sense (i.\,e., strictly increasing and continuous at limit ordinals), it is now enough to show that~$\alpha<\beta$ implies $D(\alpha)=\mu_\beta(\alpha)<D(\beta)$. Indeed, the support and normality conditions entail that the morphism $D(\iota_\alpha^\beta):D(\alpha)\to D(\beta)$ that arises from the inclusion $\iota_\alpha^\beta:\alpha\hookrightarrow\beta$ has range
\begin{equation*}
\rng(D(\iota_\alpha^\beta))=\{\gamma\in D(\beta)\,|\,\gamma<\mu_\beta(\alpha)\}.
\end{equation*}
Induction on~$\gamma<D(\alpha)$ yields $D(\iota_\alpha^\beta)(\gamma)=\gamma$ and hence $D(\alpha)=\mu_\beta(\alpha)$, as desired. For our purpose, the following consequence of normality is particularly important:

\begin{lemma}\label{lem:normal-representation-independent}
If $D$ is a normal dilator, then we have
\begin{equation*}
\eta_\alpha(\gamma)=(\sigma;\gamma_0,\dots,\gamma_{n-1};\alpha)_D\quad\Leftrightarrow\quad\eta_\beta(\gamma)=(\sigma;\gamma_0,\dots,\gamma_{n-1};\beta)_D
\end{equation*}
for arbitrary ordinals~$\alpha<\beta$ and any $\gamma<D(\alpha)$.
\end{lemma}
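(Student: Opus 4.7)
The plan is to leverage the fact, established in the paragraph immediately preceding the lemma, that for a normal dilator~$D$ and ordinals $\alpha<\beta$ the induced map $D(\iota_\alpha^\beta):D(\alpha)\to D(\beta)$ is literally the inclusion, i.e.~$D(\iota_\alpha^\beta)(\gamma)=\gamma$ for every $\gamma<D(\alpha)$. Once this is in hand, the lemma reduces to unwinding the naturality square relating $\eta_\alpha$, $\eta_\beta$ and $\overline D(\iota_\alpha^\beta)$ from Definition~\ref{def:dil-ordinals}, together with the explicit description of $\overline D$ on morphisms from Definition~\ref{def:dil-extend}.

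For the implication from left to right, I would start from $\gamma<D(\alpha)$ with $\eta_\alpha(\gamma)=(\sigma;\gamma_0,\dots,\gamma_{n-1};\alpha)_D$. Using $D(\iota_\alpha^\beta)(\gamma)=\gamma$ and the defining equation $\eta_\beta\circ D(\iota_\alpha^\beta)=\overline D(\iota_\alpha^\beta)\circ\eta_\alpha$, I compute
\begin{equation*}
\eta_\beta(\gamma)=\overline D(\iota_\alpha^\beta)\bigl((\sigma;\gamma_0,\dots,\gamma_{n-1};\alpha)_D\bigr)=(\sigma;\gamma_0,\dots,\gamma_{n-1};\beta)_D,
\end{equation*}
where the second equality is a direct application of the clause in Definition~\ref{def:dil-extend} defining $\overline D$ on morphisms, with $f=\iota_\alpha^\beta$ acting as the identity on the ordinals $\gamma_0,\dots,\gamma_{n-1}<\alpha$.

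For the converse, assume $\eta_\beta(\gamma)=(\sigma;\gamma_0,\dots,\gamma_{n-1};\beta)_D$ with $\gamma<D(\alpha)$ (note that the $\gamma_i<\alpha$ are automatic, since they form the support of $\gamma$, and the supports of elements in the range of $D(\iota_\alpha^\beta)$ are contained in $\alpha$ by naturality of~$\supp$). Let $\eta_\alpha(\gamma)=(\tau;\delta_0,\dots,\delta_{m-1};\alpha)_D$ be whatever representation $\eta_\alpha$ provides. The already-proved forward direction gives $\eta_\beta(\gamma)=(\tau;\delta_0,\dots,\delta_{m-1};\beta)_D$, and the uniqueness of representations built into Definition~\ref{def:dil-extend} forces $\sigma=\tau$, $n=m$ and $\gamma_i=\delta_i$.

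I do not anticipate any real obstacle: the lemma is essentially bookkeeping, exploiting the fact that $D(\iota_\alpha^\beta)$ is an inclusion in the strict sense. If any subtlety arises it is the need to justify that the expression $(\sigma;\gamma_0,\dots,\gamma_{n-1};\beta)_D$ produced by applying $\overline D(\iota_\alpha^\beta)$ really is a bona fide element of $\overline D(\beta)$ (i.e.~that $(\sigma,n)$ still lies in $\tr(D)$ and that the tuple of ordinals is strictly increasing below~$\beta$), but both requirements are inherited from the assumed representation in $\overline D(\alpha)$, since $\iota_\alpha^\beta$ is strictly increasing and does not touch the trace index.
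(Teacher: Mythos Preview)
Your proposal is correct and follows essentially the same route as the paper: both use the naturality square $\eta_\beta\circ D(\iota_\alpha^\beta)=\overline D(\iota_\alpha^\beta)\circ\eta_\alpha$ together with the previously established fact $D(\iota_\alpha^\beta)(\gamma)=\gamma$ to obtain the forward direction, and then derive the converse from the fact that $\eta_\beta(\gamma)$ can have only one value. Your treatment of the converse is slightly more explicit than the paper's one-line remark, but the underlying idea is identical.
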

\begin{proof}
In view of Definition~\ref{def:dil-ordinals} we have $\eta_\beta\circ D(\iota_\alpha^\beta)=\overline D(\iota_\alpha^\beta)\circ\eta_\alpha$, where $\iota_\alpha^\beta$ is the inclusion. Above we have seen that $D(\iota_\alpha^\beta)(\gamma)=\gamma$ holds for~$\gamma<D(\alpha)$. Assuming the left part of the equivalence in the lemma, we thus get
\begin{align*}
\eta_\beta(\gamma)=\eta_\beta\circ D(\iota_\alpha^\beta)(\gamma)&=\overline D(\iota_\alpha^\beta)\circ\eta_\alpha(\gamma)=\overline D(\iota_\alpha^\beta)((\sigma;\gamma_0,\dots,\gamma_{n-1};\alpha)_D)=\\
{}&=(\sigma;\iota_\alpha^\beta(\gamma_0),\dots,\iota_\alpha^\beta(\gamma_{n-1});\beta)_D=(\sigma;\gamma_0,\dots,\gamma_{n-1};\beta)_D,
\end{align*}
where the penultimate equality relies on Definition~\ref{def:dil-extend}. The converse implication follows immediately, as its failure would lead to two different values for~$\eta_\beta(\gamma)$.
\end{proof}

Motivated by the lemma, we introduce the following terminology.

\begin{definition}\label{def:representation}
Consider a normal dilator~$D$. If $\eta_\alpha(\gamma)=(\sigma;\gamma_0,\dots,\gamma_{n-1};\alpha)_D$ holds for some (or equivalently every) ordinal~$\alpha$ with $\gamma<D(\alpha)$, then we write
\begin{equation*}
\gamma\simeq(\sigma;\gamma_0,\dots,\gamma_{n-1})_D
\end{equation*}
and call the right side the representation of~$\gamma$.
\end{definition}

Speaking of \emph{the} representation is justified in view of the following.

\begin{proposition}\label{prop:representations-exist-unique}
Assume that~$D$ is a normal dilator. Then any ordinal~$\gamma$ has a unique representation
\begin{equation*}
\gamma\simeq(\sigma;\gamma_0,\dots,\gamma_{n-1})_D,
\end{equation*}
with $(\sigma,n)\in\tr(D)$ and $\gamma_0<\dots<\gamma_{n-1}$. In terms of this representation, we have
\begin{equation*}
\gamma<D(\alpha)\quad\Leftrightarrow\quad n=0\text{ or }\gamma_{n-1}<\alpha
\end{equation*}
for any ordinal~$\alpha$.
\end{proposition}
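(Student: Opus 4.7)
My plan is to base the proof on Lemma~\ref{lem:normal-representation-independent}, which says that a representation of~$\gamma$, once available for one $\alpha$ with $\gamma<D(\alpha)$, is available for every such $\alpha$ with the same components. With that lemma in hand, the proposition reduces to facts about the explicit construction of~$\overline D(\alpha)$ in Definition~\ref{def:dil-extend} together with the order isomorphism $\eta_\alpha\colon D(\alpha)\to\overline D(\alpha)$.

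For existence, I first use the fact, established in the discussion following~\eqref{eq:normality-extended}, that $\alpha\mapsto D(\alpha)$ is a normal function; in particular $D(\gamma+1)>\gamma$, so some $\alpha$ with $\gamma<D(\alpha)$ exists. Fixing such~$\alpha$, the element $\eta_\alpha(\gamma)\in\overline D(\alpha)$ has, by Definition~\ref{def:dil-extend}, the form $(\sigma;\gamma_0,\dots,\gamma_{n-1};\alpha)_D$ for some $(\sigma,n)\in\tr(D)$ and ordinals $\gamma_0<\dots<\gamma_{n-1}<\alpha$, and Lemma~\ref{lem:normal-representation-independent} then yields $\gamma\simeq(\sigma;\gamma_0,\dots,\gamma_{n-1})_D$. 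For uniqueness, suppose $\gamma$ admits two representations $(\sigma;\gamma_0,\dots,\gamma_{n-1})_D$ and $(\tau;\delta_0,\dots,\delta_{m-1})_D$. I pick $\alpha$ large enough that $\gamma<D(\alpha)$ and all listed parameters lie below~$\alpha$; applying Lemma~\ref{lem:normal-representation-independent} to each representation makes both unfolded expressions equal $\eta_\alpha(\gamma)$, and the formal description in Definition~\ref{def:dil-extend} renders elements of $\overline D(\alpha)$ uniquely determined by~$\sigma$ together with their ordered list of parameters, forcing the tuples to agree.

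For the characterization of $\gamma<D(\alpha)$, the forward direction is immediate: if $\gamma<D(\alpha)$ and $\gamma\simeq(\sigma;\gamma_0,\dots,\gamma_{n-1})_D$, then $\eta_\alpha(\gamma)=(\sigma;\gamma_0,\dots,\gamma_{n-1};\alpha)_D$ must lie in $\overline D(\alpha)$, which by Definition~\ref{def:dil-extend} forces $\gamma_{n-1}<\alpha$ whenever $n>0$ and imposes no constraint when $n=0$. Conversely, if $n=0$ or $\gamma_{n-1}<\alpha$, then $(\sigma;\gamma_0,\dots,\gamma_{n-1};\alpha)_D$ is a legitimate element of~$\overline D(\alpha)$, so setting $\gamma^\ast:=\eta_\alpha^{-1}((\sigma;\gamma_0,\dots,\gamma_{n-1};\alpha)_D)$ gives $\gamma^\ast<D(\alpha)$ and $\gamma^\ast\simeq(\sigma;\gamma_0,\dots,\gamma_{n-1})_D$, whence uniqueness yields $\gamma^\ast=\gamma$ and hence $\gamma<D(\alpha)$. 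The main obstacle is really just careful bookkeeping with Lemma~\ref{lem:normal-representation-independent}, which is invoked once forward (to pass from the concrete $\eta_\alpha$-data to an $\alpha$-independent representation) and once in reverse (to recover an ordinal from a formal tuple and identify it with~$\gamma$); beyond this lemma and Definition~\ref{def:dil-extend} no further ingredients are required.
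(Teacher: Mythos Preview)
Your argument is correct and follows the paper's proof for existence, uniqueness, and the forward direction of the equivalence (modulo harmless variations: you bring both competing representations to a common large~$\alpha$, whereas the paper transports one representation to the witness of the other; and your citation of Lemma~\ref{lem:normal-representation-independent} in the existence step is superfluous, since Definition~\ref{def:representation} already yields $\gamma\simeq(\sigma;\gamma_0,\dots,\gamma_{n-1})_D$ directly from $\eta_\alpha(\gamma)=(\sigma;\gamma_0,\dots,\gamma_{n-1};\alpha)_D$).

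The genuinely different step is the converse direction of the equivalence. The paper argues via the normality structure: it picks $\beta>\alpha$ with $\gamma<D(\beta)$, reads off $\supp_\beta(\gamma)=\{\gamma_0,\dots,\gamma_{n-1}\}\subseteq\alpha$, and invokes equation~\eqref{eq:normality-extended} together with $\mu_\beta(\alpha)=D(\alpha)$ to get $\gamma<D(\alpha)$. You instead bypass~$\mu$ entirely: you observe that the hypothesis makes $(\sigma;\gamma_0,\dots,\gamma_{n-1};\alpha)_D$ a legitimate element of~$\overline D(\alpha)$, pull it back along~$\eta_\alpha$ to some $\gamma^\ast<D(\alpha)$, and then use the uniqueness you have just established to identify $\gamma^\ast=\gamma$. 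Your route is slightly more self-contained (it does not unpack the normality condition a second time), while the paper's route makes the role of normality more visible. Both are valid.
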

\begin{proof}
We have already seen that $\alpha\mapsto D(\alpha)$ is strictly increasing and hence unbounded. To prove existence, pick an ordinal~$\alpha$ with $\gamma<D(\alpha)$. The value $\eta_\alpha(\gamma)$ is an element of~$\overline D(\alpha)$ and hence of the form $(\sigma;\gamma_0,\dots,\gamma_{n-1};\alpha)_D$ with $(\sigma,n)\in\tr(D)$ and $\gamma_0<\dots<\gamma_{n-1}<\alpha$. It follows that~$\gamma$ has representation as in the proposition. To show uniqueness, consider a competitor $\gamma\simeq(\tau;\gamma'_0,\dots,\gamma'_{m-1})_D$ for the representation of~$\gamma$. Then $\eta_\beta(\gamma)=(\tau;\gamma'_0,\dots,\gamma'_{m-1};\beta)_D$ holds for some~$\beta$ with $\gamma<D(\beta)$. By Lemma~\ref{lem:normal-representation-independent} we get
\begin{equation*}
(\tau;\gamma'_0,\dots,\gamma'_{m-1};\beta)_D=(\sigma;\gamma_0,\dots,\gamma_{n-1};\beta)_D,
\end{equation*}
so that the two representations of~$\gamma$ coincide after all. The direction ``$\Rightarrow$" of the equivalence in the proposition was shown in our proof of existence. For the converse implication, write $\eta_\beta(\gamma)=(\sigma;\gamma_0,\dots,\gamma_{n-1};\beta)_D$ with $\gamma<D(\beta)$ and $\alpha<\beta$. If the right side of the desired equivalence holds, then we have
\begin{equation*}
\supp_\beta(\gamma)=\{\gamma_0,\dots,\gamma_{n-1}\}\subseteq\alpha.
\end{equation*}
We now get $\gamma<\mu_\beta(\alpha)=D(\alpha)$ by~(\ref{eq:normality-extended}) from above.
\end{proof}

The definition of $\leq_1^D$ from the introduction can now be made official. Given a normal dilator~$D$, let $\mathcal L_D$ be the language that consists of two binary relation symbols~$\leq$ and $\leq_1^D$, as well as an $(n+1)$-ary relation symbol $\gamma\simeq(\sigma;\gamma_0,\dots,\gamma_{n-1})_D$ for each element $(\sigma,n)\in\tr(D)$ (considered as a relation in~$\gamma$ and $\gamma_0,\dots,\gamma_{n-1}$). We will only interpret $\mathcal L_D$ in structures that have a set of ordinals as universe. The symbol $\leq$ is always interpreted as the usual inequality between ordinals. Note that we do not need a symbol for equality, as $\alpha=\beta$ is equivalent to the conjunction of~$\alpha\leq\beta$ and $\beta\leq\alpha$. The relation symbols $\gamma\simeq(\sigma;\gamma_0,\dots,\gamma_{n-1})_D$ are always interpreted according to Definition~\ref{def:representation}. In particular, the given relation can only hold when we have~$\gamma_0<\dots<\gamma_{n-1}$. As usual, a bijection $f:X\to Y$ between $\mathcal L_D$-structures is an $\mathcal L_D$-isomorphism if we have
\begin{equation*}
\gamma\simeq(\sigma;\gamma_0,\dots,\gamma_{n-1})_D\quad\Leftrightarrow\quad f(\gamma)\simeq(\sigma;f(\gamma_0),\dots,f(\gamma_{n-1}))_D
\end{equation*}
for all $(\sigma,n)\in\tr(D)$ and $\gamma,\gamma_0,\dots,\gamma_{n-1}\in X$, and if analogous equivalences hold with respect to~$\leq$ and $\leq_1^D$. Concerning the interpretation of~$\leq_1^D$, we adopt the following as our official definition. In Proposition~\ref{prop:leq_1^D-Sigma_1-elem} below, we will show that it coincides with the more familiar formulation in terms of~$\Sigma_1$-elementarity. Note that this is not entirely obvious, as our language can be infinite (depending on~$D$).

\begin{definition}\label{def:leq_1^D}
Invoking recursion on~$\beta$, we declare that $\alpha\leq_1^D\beta$ is equivalent to the conjunction of~$\alpha\leq\beta$ and the following: for all finite sets $X\subseteq\alpha$ and $Y\subseteq\beta\backslash\alpha$, there is a finite $\widetilde Y\subseteq\alpha$ and an $\mathcal L_D$-isomorphism $f:X\cup Y\to X\cup\widetilde Y$ that fixes~$X$.
\end{definition}

Let us briefly explain how our base theory $\textsf{ATR}_0^{\textsf{set}}$ accommodates this definition: The idea is to define $\leq_1^D\!\restriction\!(\beta\times\beta)$ by primitive recursion (cf.~\cite{jensen-karp,simpson82}). This is obstructed by the fact that $\delta\mapsto D(\delta)$ may not be primitive recursive, as we have noted above. To resolve this issue, we restrict attention to ordinals~$\beta$ below a fixed (but of course arbitrary) bound~$\delta$. As we have seen in the context of Definition~\ref{def:dil-ordinals}, axiom beta allows us to consider the isomorphism $\eta_\delta:D(\delta)\to\overline D(\delta)$. Using the latter as a parameter, a primitive recursive set function can decide
\begin{equation*}
\gamma\simeq(\sigma;\gamma_0,\dots,\gamma_{n-1})_D
\end{equation*}
for all $\gamma$ and $\gamma_0,\dots,\gamma_{n-1}$ below~$\delta\leq D(\delta)$. Concerning the definition of~$\leq_1^D$, it follows that the recursion step for~$\beta<\delta$ is primitive recursive (still with $\eta_\delta$ as parameter). This shows that $\leq_1^D$ can be defined as a binary relation on the class of ordinals, and that each of the restrictions $\leq_1^D\!\restriction\!(\beta\times\beta)$ exists as a set. In the proof of Proposition~\ref{prop:clubs-Delta} we will see that the class function $(D,\beta)\mapsto{\leq_1^D\!\restriction\!(\beta\times\beta)}$ is $\Sigma$-definable and total in admissible sets.

As the final result of this section, we show that the given definition of~$\leq_1^D$ is equivalent to one in terms of $\Sigma_1$-elementarity. To be precise, we state that our \mbox{$\Sigma_1$-}formulas may begin with a string of existential quantifiers, followed by a quantifier-free formula. Each set of ordinals gives rise to a canonical $\mathcal L_D$-structure, in which the relation symbols are interpreted as specified above. The relation~$\vDash$ of satisfaction in an~$\mathcal L_D$-structure is readily formalized in terms of primitive recursive set functions (cf.~e.\,g.~\cite[Section~1.3]{freund-thesis}). In fact, the equivalence with~(ii) below is not needed for any of the technical arguments in this paper (and the proof of (i)$\Leftrightarrow$(iii) stands on its own). So if the reader is happy with Definition~\ref{def:leq_1^D} as characterization of~$\leq_1^D$, they may avoid the notion of satisfaction altogether. The equivalence with~(iii) shows that certain equivalences in the definition of \mbox{$\mathcal L_D$-}isomorphism can be reduced to implications (provided we quantify over all finite substructures).

\begin{proposition}\label{prop:leq_1^D-Sigma_1-elem}
The following are equivalent for each normal dilator~$D$ and all ordinal numbers $\alpha\leq\beta$:
\begin{enumerate}[label=(\roman*)]
\item we have $\alpha\leq_1^D\beta$ (according to Definition~\ref{def:leq_1^D}),
\item the $\mathcal L_D$-structure~$\alpha$ is a $\Sigma_1$-elementary substructure of~$\beta$, i.\,e., we have
\begin{equation*}
\beta\vDash\varphi(\alpha_1,\dots,\alpha_n)\quad\Rightarrow\quad\alpha\vDash\varphi(\alpha_1,\dots,\alpha_n)
\end{equation*}
for any $\Sigma_1$-formula $\varphi$ of $\mathcal L_D$ and all parameters $\alpha_1,\dots,\alpha_n<\alpha$,
\item for all finite sets $X\subseteq\alpha$ and $Y\subseteq\beta\backslash\alpha$, there is a finite $\widetilde Y\subseteq\alpha$ and an $\{\leq,\leq_1^D\}$-isomorphism $f:X\cup Y\to X\cup\widetilde Y$ that fixes~$X$, such that we have
\begin{equation*}
\gamma\simeq(\sigma;\gamma_0,\dots,\gamma_{n-1})_D\quad\Rightarrow\quad f(\gamma)\simeq(\sigma;f(\gamma_0),\dots,f(\gamma_{n-1}))_D
\end{equation*}
for all $(\sigma,n)\in\tr(D)$ and $\gamma,\gamma_0,\dots,\gamma_{n-1}\in X\cup Y$.
\end{enumerate}
\end{proposition}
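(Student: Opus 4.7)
The plan is to establish the cycle (i) $\Rightarrow$ (ii) $\Rightarrow$ (iii) $\Rightarrow$ (i). The implication (i) $\Rightarrow$ (iii) is trivially available as a shortcut, since a full $\mathcal{L}_D$-isomorphism satisfies all the conditions required in (iii); combined with (iii) $\Rightarrow$ (i), this gives an independent proof of (i) $\Leftrightarrow$ (iii), matching the author's remark that this equivalence stands on its own.

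For (iii) $\Rightarrow$ (i), the most delicate step, I would form $Z = X \cup Y$ and close it under the canonical support operation $\gamma \mapsto \{\delta_0, \ldots, \delta_{k-1}\}$ read off the representation $\gamma \simeq (\tau; \delta_0, \ldots, \delta_{k-1})_D$; by Lemma~\ref{lem:normal-representation-independent} this support is independent of the ambient ordinal. A well-founded recursion on ordinals shows the resulting closure $Z^*$ is finite, since any support element strictly smaller than its parent triggers a recursion on a strictly smaller ordinal, while a support element equal to its parent adds nothing new. Splitting $Z^*$ via $X^* = Z^* \cap \alpha$ and $Y^* = Z^* \cap (\beta \setminus \alpha)$, I apply (iii) to obtain an $\{\leq, \leq_1^D\}$-isomorphism $f^* : X^* \cup Y^* \to X^* \cup \widetilde{Y^*}$ fixing $X^*$ with the one-sided implication for representations. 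The key is that $f^*$ is actually a full $\mathcal{L}_D$-isomorphism: if $f^*(\gamma) \simeq (\sigma; f^*(\gamma_0), \ldots, f^*(\gamma_{n-1}))_D$ with $\gamma, \gamma_i \in X^* \cup Y^*$, then $\gamma$'s canonical representation $\gamma \simeq (\tau; \delta_0, \ldots, \delta_{k-1})_D$ has all $\delta_j$ in $X^* \cup Y^*$ by closure, so (iii) gives $f^*(\gamma) \simeq (\tau; f^*(\delta_0), \ldots, f^*(\delta_{k-1}))_D$; uniqueness of the canonical representation of $f^*(\gamma)$ (Proposition~\ref{prop:representations-exist-unique}) then forces $\tau = \sigma$, $k = n$, and $\gamma_i = \delta_i$. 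Restricting $f^*$ to $X \cup Y$ and setting $\widetilde{Y} = f^*(Y)$ yields the $\mathcal{L}_D$-isomorphism required by (i).

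For (i) $\Rightarrow$ (ii), given a $\Sigma_1$-formula $\exists \vec y\, \psi(\vec x, \vec y)$ with quantifier-free $\psi$ satisfied in $\beta$ at parameters $\vec\alpha < \alpha$ with witnesses $\vec\beta$, I would apply (i) to $X = \{\vec\alpha\} \cup (\vec\beta \cap \alpha)$ and $Y = \vec\beta \setminus \alpha$ and observe that the resulting $\mathcal{L}_D$-isomorphism preserves quantifier-free formulas in both directions, giving witnesses in $\alpha$. For (ii) $\Rightarrow$ (iii), given $X = \{\alpha_1 < \cdots < \alpha_k\}$ and $Y = \{\beta_1 < \cdots < \beta_m\}$, I would form the finite quantifier-free diagram $\Phi(\vec x, \vec y)$ containing every $\leq$- and $\leq_1^D$-literal (positive or negated) true on $(\vec\alpha, \vec\beta)$ in $\beta$, together with the (at most $k+m$ many) positive representation literals $c \simeq (\sigma; c_1, \ldots, c_n)_D$ with all of $c, c_1, \ldots, c_n$ in $X \cup Y$; finiteness of the positive representation literals is granted by uniqueness of canonical representations (Proposition~\ref{prop:representations-exist-unique}). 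The $\Sigma_1$-formula $\exists\vec y\, \Phi(\vec\alpha, \vec y)$ is witnessed in $\beta$ by $\vec\beta$, so (ii) yields witnesses $\vec{\tilde\beta} \in \alpha$, and the map $f$ fixing $X$ with $f(\beta_i) = \tilde\beta_i$ is an $\{\leq, \leq_1^D\}$-isomorphism; the $\Rightarrow$-direction for representation relations follows because any rep relation firing on $X \cup Y$ must, by uniqueness, be the canonical representation recorded in $\Phi$.

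The main obstacle I expect is the (iii) $\Rightarrow$ (i) step: both establishing finiteness of the support closure and, more subtly, using uniqueness of canonical representations to promote the one-sided implication from (iii) to the two-sided preservation required for an $\mathcal{L}_D$-isomorphism. The remaining steps are routine manipulations with $\Sigma_1$-formulas once the correct finite diagram has been identified.
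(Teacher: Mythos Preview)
Your proposal is correct and follows essentially the same cycle (i)$\Rightarrow$(ii)$\Rightarrow$(iii)$\Rightarrow$(i) as the paper, including the key idea of closing $X\cup Y$ under supports before applying~(iii) and then using uniqueness of representations to upgrade the one-sided implication to a full $\mathcal L_D$-isomorphism. One small point to make explicit in the (iii)$\Rightarrow$(i) step: uniqueness of the representation of $f^*(\gamma)$ only gives $f^*(\gamma_i)=f^*(\delta_i)$, and you then need injectivity of $f^*$ (which you have, since it is an $\{\leq,\leq_1^D\}$-isomorphism) to conclude $\gamma_i=\delta_i$.
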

\begin{proof}
Concerning~(i)$\Rightarrow$(ii), the premise of the implication in~(ii) entails that we have $Z\vDash\varphi(\alpha_1,\dots,\alpha_n)$ for some finite subset $Z\supseteq\{\alpha_1,\dots,\alpha_n\}$ of~$\beta$, since $\varphi$ is existential and the language contains no function symbols. Write $Z=X\cup Y$ with $X\subseteq\alpha$ and $Y\subseteq\beta\backslash\alpha$. From~(i) we get a finite set $\widetilde Y\subseteq\alpha$ and an \mbox{$\mathcal L_D$-isomorphism} $f:X\cup Y\to X\cup\widetilde Y$ that fixes~$X$. This yields $X\cup\widetilde Y\vDash\varphi(f(\alpha_1),\dots,f(\alpha_n))$ and then $\alpha\vDash\varphi(\alpha_1,\dots,\alpha_n)$, as $f$ fixes~$X=Z\cap\alpha\supseteq\{\alpha_1,\dots,\alpha_n\}$ and as $\Sigma_1$-formulas are preserved upwards. To show that~(ii) implies~(iii), we consider arbitrary sets $X=\{\alpha_1,\dots,\alpha_m\}\subseteq\alpha$ and $Y=\{\beta_1,\dots,\beta_n\}\subseteq\beta\backslash\alpha$. The restrictions of~$\leq$ and $\leq_1^D$ to $X\cup Y$ can be fully described by a quantifier-free $\mathcal L_D$-formula $\theta_0$, in the sense that $\theta_0(\gamma_1,\dots,\gamma_m,\delta_1,\dots,\delta_n)$ holds precisely when $f(\alpha_i)=\gamma_i$ and $f(\beta_j)=\delta_j$ determines a $\{\leq,\leq_1^D\}$-isomorphism~$f:X\cup Y\to\rng(f)$. Furthermore, there are only finitely many true statements
\begin{equation*}
\zeta\simeq(\sigma;\zeta_0,\dots,\zeta_{k-1})_D
\end{equation*}
with $\{\zeta\}\cup\{\zeta_0,\dots,\zeta_{k-1}\}\subseteq X\cup Y$, by the uniqueness part of Proposition~\ref{prop:representations-exist-unique}. It is straightforward to specify a quantifier-free $\mathcal L_D$-formula $\theta_1$ that guarantees these statements, in the sense that we have $\theta_1(\gamma_1,\dots,\gamma_m,\delta_1,\dots,\delta_n)$ precisely when the implications in~(iii) hold for $f:X\cup Y\to\rng(f)$ with $f(\alpha_i)=\gamma_i$ and $f(\beta_j)=\delta_j$. Now let $\varphi(\alpha_1,\dots,\alpha_m)$ be the $\Sigma_1$-formula
\begin{equation*}
\exists y_1,\dots,y_n[\theta_0(\alpha_1,\dots,\alpha_m,y_1,\dots,y_n)\land\theta_1(\alpha_1,\dots,\alpha_m,y_1,\dots,y_n)].
\end{equation*}
The assignment $y_i:=\beta_i$ witnesses $\beta\vDash\varphi(\alpha_1,\dots,\alpha_m)$. Invoking~(ii), we learn that there are ordinals~$\delta_1,\dots,\delta_n<\alpha$ such that we have
\begin{equation*}
\theta_0(\alpha_1,\dots,\alpha_m,\delta_1,\dots,\delta_n)\land\theta_1(\alpha_1,\dots,\alpha_m,\delta_1,\dots,\delta_n).
\end{equation*}
Set $\widetilde Y:=\{\delta_1,\dots,\delta_n\}$, and define $f:X\cup Y\to X\cup\widetilde Y$ by $f(\alpha_i):=\alpha_i$ and~$f(\beta_j):=\delta_j$. By construction, $f$ is an $\{\leq,\leq_1^D\}$-isomorphism, fixes the set $X=\{\alpha_1,\dots,\alpha_m\}$, and validates the implications in~(iii). Finally, we show that~(iii) implies~(i). For the duration of this argument, let us say that a set $Z\subseteq\beta$ is closed if we have
\begin{equation*}
Z\ni\gamma\simeq(\sigma;\gamma_0,\dots,\gamma_{n-1})_D\quad\Rightarrow\quad\{\gamma_0,\dots,\gamma_{n-1}\}\subseteq Z,
\end{equation*}
for all $\gamma<\beta$. If $X\cup Y$ is closed, then the implications in~(iii) will automatically upgrade to equivalences. Indeed, assume that we have
\begin{equation*}
f(\gamma)\simeq(\sigma;f(\gamma_0),\dots,f(\gamma_{n-1}))_D
\end{equation*}
with~$\gamma\in X\cup Y$. By the existence part of Proposition~\ref{prop:representations-exist-unique}, we obtain a representation $\gamma\simeq(\tau;\gamma'_0,\dots,\gamma'_{m-1})_D$. As~$X\cup Y$ is closed, we get $\{\gamma'_0,\dots,\gamma'_{m-1}\}\subseteq X\cup Y$. Hence one of the implications in~(iii) yields
\begin{equation*}
f(\gamma)\simeq(\tau;f(\gamma'_0),\dots,f(\gamma'_{m-1}))_D.
\end{equation*}
Invoking the uniqueness part of Proposition~\ref{prop:representations-exist-unique}, we can now conclude that we have $\sigma=\tau$ and $f(\gamma_i)=f(\gamma'_i)$ for all~$i<m=n$. Given that $f$ is an $\{\leq,\leq_1^D\}$-isomorphism and hence injective, we obtain $\gamma_i=\gamma'_i$ and thus
\begin{equation*}
\gamma\simeq(\tau;\gamma'_0,\dots,\gamma'_{m-1})_D=(\sigma;\gamma_0,\dots,\gamma_{n-1})_D,
\end{equation*}
as needed for the converse of our implication from~(iii). As preparation for the final part of the argument, we show that any finite set $Z\subseteq\beta$ is contained in a finite closed set~$\cl(Z)\subseteq\beta$. In view of $\gamma<D(\gamma+1)$ we get
\begin{equation*}
\gamma\simeq(\sigma;\gamma_0,\dots,\gamma_{n-1})_D\quad\Rightarrow\quad\{\gamma_0,\dots,\gamma_{n-1}\}\subseteq\gamma+1,
\end{equation*}
using the equivalence from Proposition~\ref{prop:representations-exist-unique}. By recursion on~$\gamma<\beta$ we now define
\begin{equation*}
\cls(\gamma):=\{\gamma\}\cup\bigcup\{\cls(\gamma_i)\,|\,i<n\text{ and }\gamma_i<\gamma\}\quad\text{for }\gamma\simeq(\sigma;\gamma_0,\dots,\gamma_{n-1})_D.
\end{equation*}
This amounts to a primitive recursion with $\eta_\beta:D(\beta)\to\overline D(\beta)$ as parameter, where the latter allows us to compute the representations of ordinals $\gamma<\beta\leq D(\beta)$ (cf.~the paragraph after Definition~\ref{def:leq_1^D}). A straightforward induction on~$\gamma$ shows that $\cls(\gamma)\subseteq\gamma+1$ is finite and closed. The desired closure of a finite set $Z\subseteq\beta$ can thus be given by $\cl(Z):=\bigcup\{\cls(\gamma)\,|\,\gamma\in Z\}\subseteq\beta$. We now have all ingredients to deduce~(i) from~(iii). In view of Definition~\ref{def:leq_1^D}, we consider finite sets~$X\subseteq\alpha$ and~$Y\subseteq\beta\backslash\alpha$. Write $\cl(X\cup Y)=X'\cup Y'$ with $X\subseteq X'\subseteq\alpha$ and $Y\subseteq Y'\subseteq\beta\backslash\alpha$. Let us consider~$\widetilde Y'\subseteq\alpha$ and $f:X'\cup Y'\to X'\cup\widetilde Y'$ as provided by~(iii). Since~$X'\cup Y'$ is closed, the implications in~(iii) upgrade to equivalences, as we have seen above. Thus~$f$ is an $\mathcal L_D$-isomorphism, and so is its restriction $f':X\cup Y\to\rng(f')$. As~$f$ fixes~$X'$, its restriction~$f'$ fixes $X\subseteq X'$ and has range $X\cup\widetilde Y$ for some $\widetilde Y\subseteq\alpha$. In view of Definition~\ref{def:leq_1^D} this yields~$\alpha\leq_1^D\beta$, as asserted by~(i).
\end{proof}

\section{From $\Sigma_1$-elementarity to Bachmann-Howard fixed points}\label{sect:elementarity-to-Bachmann-Howard}

Recall that a function $\vartheta:D(\alpha)\to\alpha$ is a Bachmann-Howard collapse of a given dilator~$D$ if conditions~(a) and~(b) from Theorem~\ref{thm:from-freund-equivalence} are satisfied. In this section we show how to transform~$D$ into a normal dilator~$\Sigma D$, such that a Bachmann-Howard collapse of~$D$ can be constructed if we have~$\alpha\leq_1^{\Sigma D}\Sigma D(\alpha+1)$.

The restriction to normal dilators is important for our approach to patterns of resemblance, because normality is required for Lemma~\ref{lem:normal-representation-independent}. On the other hand, the notion of Bachmann-Howard collapse is most interesting for dilators that are not normal, as the following remark shows. This explains why the aforementioned transformation of~$D$ into~$\Sigma D$ is necessary.

\begin{remark}
It is rather straightforward to construct a Bachmann-Howard fixed point of a normal dilator~$D$. In the previous section we have seen that~$\alpha\mapsto D(\alpha)$ is a normal function in the usual sense. We may thus pick a limit ordinal~\mbox{$\lambda=D(\lambda)$}. Let us define $\vartheta:D(\lambda)\to\lambda$ by setting~$\vartheta(\gamma):=D(\gamma+1)<D(\lambda)=\lambda$ for~\mbox{$\gamma<\lambda=D(\lambda)$}. Condition~(a) from Theorem~\ref{thm:from-freund-equivalence} is immediate since~$\vartheta$ is fully order preserving. Invoking equation~(\ref{eq:normality-extended}) and the discussion that follows it, we can also conclude that $\gamma<D(\gamma+1)=\mu_\lambda(\gamma+1)$ entails $\supp_\lambda(\gamma)\subseteq\gamma+1\leq\vartheta(\gamma)$, as required for~(b). To explain the significance of this observation, we recall that the existence of Bachmann-Howard fixed points for arbitrary dilators is equivalent to \mbox{$\Pi^1_1$-comprehension} (by Theorem~\ref{thm:from-freund-equivalence} above, proved in~\cite{freund-thesis,freund-equivalence,freund-categorical,freund-computable}). On the other hand, the fact that each normal function has a limit fixed point follows from (a~small amount of) $\Pi^1_1$-transfinite induction (by~\cite{freund-rathjen_derivatives,freund-ordinal-exponentiation,freund-single-fixed-point}). Since the latter is much weaker than $\Pi^1_1$-comprehension, this shows that the strength of Bachmann-Howard fixed points can only be exhausted by dilators that are not normal.
\end{remark}

Our dilators are defined as functors on natural numbers (cf.~Definition~\ref{def:pre-dilator}), even though they can be extended to arbitrary ordinals (via Definitions~\ref{def:dil-extend} and~\ref{def:dil-ordinals}). With this in mind, we define the normal variant of a given dilator as follows.

\begin{definition}\label{def:Sigma-D}
Let $D:\nat\to\ord$ be a pre-dilator. For $n\in\nat$ we define
\begin{equation*}
\Sigma D(n):=\Sigma_{k<n}1+D(k)=(1+D(0))+\dots+(1+D(n-1)),
\end{equation*}
where the right side refers to the usual operations of ordinal arithmetic. Each $\alpha<\Sigma D(n)$ can be uniquely written as $\alpha=\Sigma D(k)+\beta$ with $k<n$ and $\beta<1+D(k)$. Given a morphism $f:m\to n$ of~$\nat$, we define $\Sigma D(f):\Sigma D(m)\to\Sigma D(n)$ by
\begin{equation*}
\Sigma D(f)(\alpha):=\begin{cases}
\Sigma D(f(k)) & \text{if $\alpha=\Sigma D(k)$ with $k<m$},\\
\Sigma D(f(k))+1+D(f\!\restriction\!k)(\beta)\hspace*{-1ex} & \text{if $\alpha=\Sigma D(k)+1+\beta$ with $\beta<D(k)$},
\end{cases}
\end{equation*}
where $f\!\restriction\!k:k\to f(k)$ is the restriction of~$f$. Let us write $\supp^D:D\Rightarrow[\cdot]^{<\omega}$ for the natural transformation that comes with the pre-dilator~$D$. We put
\begin{equation*}
\supp^{\Sigma D}_n(\alpha):=\begin{cases}
\{k\} & \text{if $\alpha=\Sigma D(k)$ with $k<n$},\\
\{k\}\cup\supp^D_k(\beta) & \text{if $\alpha=\Sigma D(k)+1+\beta$ with $\beta<D(k)$},
\end{cases}
\end{equation*}
to define a family of functions $\supp^{\Sigma D}_n:\Sigma D(n)\to[n]^{<\omega}$. Finally, we construct functions $\mu_n:n\to\Sigma D(n)$ by setting $\mu_n(k):=\Sigma D(k)$ for $k<n$.
\end{definition}

It is straightforward to verify that $\Sigma D$ is a normal pre-dilator (cf.~the proof of \cite[Proposition~3.7]{freund-rathjen_derivatives}). To decide whether it is a dilator, we need to consider the orders~$\overline{\Sigma D}(\alpha)$ for infinite ordinals~$\alpha$. This is done as part of the following result, which will be fundamental for our construction of Bachmann-Howard fixed points.

\begin{proposition}\label{prop:D-into-SigmaD}
Assume that $D$ is a dilator. Then $\Sigma D$ is a normal dilator. For each ordinal~$\alpha$, we have an order isomorphism
\begin{equation*}
\xi_\alpha:D(\alpha)\to\{\delta\in\ord\,|\,\Sigma D(\alpha)<\delta<\Sigma D(\alpha+1)\},
\end{equation*}
such that $\supp^{\Sigma D}_{\alpha+1}(\xi_\alpha(\gamma))=\{\alpha\}\cup\supp^D_\alpha(\gamma)$ holds for all~$\gamma<D(\alpha)$.
\end{proposition}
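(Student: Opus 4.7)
The plan is to prove the proposition in four stages: verify first that~$\Sigma D$ is a normal pre-dilator; then give a combinatorial description of the order~$\overline{\Sigma D}(\alpha)$ as a lexicographic sum of copies of~$1+\overline D(\gamma)$ indexed by~$\gamma<\alpha$; use this to deduce well-foundedness; and finally read off the isomorphisms~$\xi_\alpha$.

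The first stage is mostly bookkeeping. I would verify functoriality of~$\Sigma D$, naturality of~$\supp^{\Sigma D}$, and the support condition by case analysis on whether an element of~$\Sigma D(n)$ has the form $\Sigma D(k)$ or $\Sigma D(k)+1+\beta$, using the corresponding facts for~$D$. The normality condition $\sigma<\Sigma D(k)\Leftrightarrow\supp^{\Sigma D}_n(\sigma)\subseteq k$ is immediate from the decomposition $\Sigma D(n)=\Sigma D(k)+(1+D(k))+\dots+(1+D(n-1))$ together with the definition of~$\supp^{\Sigma D}$.

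The main content is stage two. For any $\gamma<\alpha$ and $(\sigma,n)\in\tr(D)$ one checks $(\Sigma D(n)+1+\sigma,n+1)\in\tr(\Sigma D)$, so there is an injection
\begin{equation*}
\bar\xi^\alpha_\gamma:\overline D(\gamma)\to\overline{\Sigma D}(\alpha),\quad (\sigma;\gamma_0,\dots,\gamma_{n-1};\gamma)_D\mapsto(\Sigma D(n)+1+\sigma;\gamma_0,\dots,\gamma_{n-1},\gamma;\alpha)_{\Sigma D}.
\end{equation*}
Unwinding Definitions~\ref{def:dil-extend} and~\ref{def:Sigma-D} I would establish: every element of~$\overline{\Sigma D}(\alpha)$ is either a marker $(0;\gamma;\alpha)_{\Sigma D}$ for some $\gamma<\alpha$ or lies in the range of a unique~$\bar\xi^\alpha_\gamma$; and for two elements $x,y$ with $\max\supp(x)=\gamma$ and $\max\supp(y)=\gamma'$, the prescribed lifting to~$\Sigma D$ of the common support produces values sitting in the ``blocks'' at positions equal to the ranks of~$\gamma,\gamma'$ in the common support, so that the order is determined first by comparing $\gamma$ with~$\gamma'$ and, within a block at~$\gamma$, by pulling back to~$\overline D(\gamma)$ through~$\bar\xi^\alpha_\gamma$. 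This identifies~$\overline{\Sigma D}(\alpha)$ with the lexicographic sum $\sum_{\gamma<\alpha}(1+\overline D(\gamma))$, with each leading~$1$ represented by the corresponding marker.

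Well-foundedness then follows quickly: any infinite descent $x_0>x_1>\dots$ in~$\overline{\Sigma D}(\alpha)$ gives rise to a weakly decreasing sequence $\tau(x_i):=\max\supp(x_i)$ in~$\alpha$ which is eventually constant, and its tail is trapped in a single block $\{(0;\gamma;\alpha)_{\Sigma D}\}\cup\rng(\bar\xi^\alpha_\gamma)$ where it would yield an infinite descent in~$\overline D(\gamma)$, contradicting that~$D$ is a dilator. Hence~$\Sigma D$ is a (normal) dilator and Definition~\ref{def:dil-ordinals} supplies $\eta^{\Sigma D}_\delta:\Sigma D(\delta)\to\overline{\Sigma D}(\delta)$. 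I then set $\xi_\alpha:=(\eta^{\Sigma D}_{\alpha+1})^{-1}\circ\bar\xi^{\alpha+1}_\alpha\circ\eta^D_\alpha$. The range description follows because $(0;\alpha;\alpha+1)_{\Sigma D}=\bar\mu^{\Sigma D}_{\alpha+1}(\alpha)$ represents the ordinal~$\Sigma D(\alpha)\in\Sigma D(\alpha+1)$, while~$\bar\xi^{\alpha+1}_\alpha$ captures exactly the elements of~$\overline{\Sigma D}(\alpha+1)$ lying strictly above this marker; the support identity $\supp^{\Sigma D}_{\alpha+1}(\xi_\alpha(\gamma))=\{\alpha\}\cup\supp^D_\alpha(\gamma)$ is visible from the formula defining~$\bar\xi^{\alpha+1}_\alpha$. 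The main obstacle throughout is the combinatorial verification in stage two: the interaction between the lifting operation of Definition~\ref{def:dil-extend} and the block structure of~$\Sigma D$ has to be checked carefully in order to justify the lexicographic-sum picture, and this is where the essential calculation lives.
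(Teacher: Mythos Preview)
Your proposal is correct and follows essentially the same route as the paper. Both arguments hinge on the same embedding formula $(\sigma;\gamma_0,\dots,\gamma_{n-1})\mapsto(\Sigma D(n)+1+\sigma;\gamma_0,\dots,\gamma_{n-1},\gamma)$, verify that it is order-preserving by unwinding Definition~\ref{def:dil-extend}, identify its range as the set of elements above the marker~$\overline\mu(\gamma)$, and deduce well-foundedness by trapping a descending sequence in a single block. The only difference is one of packaging: you state the global isomorphism $\overline{\Sigma D}(\alpha)\cong\sum_{\gamma<\alpha}(1+\overline D(\gamma))$ up front and derive everything from it, whereas the paper works directly with the single block at position~$\alpha$ inside~$\overline{\Sigma D}(\alpha+1)$ and uses normality (via the initial-segment property of~$\overline{\Sigma D}(\iota_{\alpha+1}^\beta)$) to reduce a descending sequence in~$\overline{\Sigma D}(\beta)$ to that case; the calculations involved are the same.
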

\begin{proof}
We will first construct functions $\overline\xi_\alpha:\overline D(\alpha)\to\overline{\Sigma D}(\alpha+1)$ with analogous properties. The point is that these can be defined even when~$\overline{\Sigma D}(\alpha+1)$ is not well founded, while~$\Sigma D(\alpha+1)$ is undefined in that case (cf.~Definitions~\ref{def:dil-extend} and~\ref{def:dil-ordinals}). Using properties of~$\overline\xi_\alpha$, we will be able to confirm that $\overline{\Sigma D}(\alpha+1)$ is well founded after all. Based on this fact, it will be easy to see that $\overline\xi_\alpha$ induces~$\xi_\alpha$ as in the proposition. To describe the construction of~$\overline\xi_\alpha$, we recall that elements of $\overline D(\alpha)$ have the form $(\sigma;\alpha_0,\dots,\alpha_{n-1};\alpha)_D$ with $\alpha_0<\dots<\alpha_{n-1}<\alpha$ and $(\sigma,n)\in\tr(D)$. The last condition is the conjunction of~$\sigma\in D(n)$ and $\supp^D_n(\sigma)=n$. We can conclude $\Sigma D(n)+1+\sigma\in\Sigma D(n+1)$ and
\begin{equation*}
\supp^{\Sigma D}_{n+1}(\Sigma D(n)+1+\sigma)=\{n\}\cup\supp^D_n(\sigma)=\{0,\dots,n\}=n+1,
\end{equation*}
which amounts to $(\Sigma D(n)+1+\sigma,n+1)\in\tr(\Sigma D)$. This allows us to set
\begin{equation*}
\overline\xi_\alpha\left((\sigma;\alpha_0,\dots,\alpha_{n-1};\alpha)_D\right):=(\Sigma D(n)+1+\sigma;\alpha_0,\dots,\alpha_{n-1},\alpha;\alpha+1)_{\Sigma D}.
\end{equation*}
It is straightforward but somewhat tedious to show that $\overline\xi_\alpha$ is strictly increasing: Consider an inequality
\begin{equation*}
(\sigma;\alpha_0,\dots,\alpha_{m-1};\alpha)_D<_{\overline D(\alpha)}(\tau;\beta_0,\dots,\beta_{n-1};\alpha)_D.
\end{equation*}
In view of Definition~\ref{def:dil-extend} and the paragraph that precedes it, we have
\begin{equation*}
D(|\iota_a^{a\cup b}|)(\sigma)<_{D(|a\cup b|)}D(|\iota_b^{a\cup b}|)(\tau)
\end{equation*}
with $a=\{\alpha_0,\dots,\alpha_{m-1}\}$ and $b=\{\beta_0,\dots,\beta_{n-1}\}$. Set $c:=a\cup\{\alpha\}$ and $d:=b\cup\{\alpha\}$. We then have $|c|=|a|+1$, and the strictly increasing enumeration $\en_c:|c|\to c$ can be characterized by
\begin{equation*}
\en_c(i)=\begin{cases}
\en_a(i) & \text{if $i<|a|$},\\
\alpha & \text{if $i=|a|$}.
\end{cases}
\end{equation*}
An analogous description is available for~$\en_{c\cup d}:|c\cup d|=|a\cup b|+1\to c\cup d$. Now consider the function~$h:|c|\to|c\cup d|$ with
\begin{equation*}
h(i):=\begin{cases}
|\iota_a^{a\cup b}|(i) & \text{if $i<|a|$},\\
|a\cup b| & \text{if $i=|a|$}.
\end{cases}
\end{equation*}
For $i<|a|$ we can compute
\begin{equation*}
\en_{c\cup d}\circ h(i)=\en_{c\cup d}\circ\underbrace{|\iota_a^{a\cup b}|(i)}_{<|a\cup b|}=\en_{a\cup b}\circ|\iota_a^{a\cup b}|(i)=\iota_a^{a\cup b}\circ\en_a(i)=\iota_c^{c\cup d}\circ\en_c(i).
\end{equation*}
The equation between the outermost expressions does also hold for~$i=|a|$ (where both are equal to~$\alpha$). It follows that $h$ is equal to $|\iota_c^{c\cup d}|$, which is uniquely characterized by the equation that we have established. In other words, we have
\begin{equation*}
|\iota_c^{c\cup d}|(|a|)=|a\cup b|\quad\text{and}\quad|\iota_c^{c\cup d}|\!\restriction\!|a|=|\iota_a^{a\cup b}|:|a|\to|a\cup b|.
\end{equation*}
The analogous facts hold for $|\iota_c^{c\cup d}|$ (with $|a|$ and $\iota_a^{a\cup b}$ replaced by~$|b|$ and $\iota_b^{a\cup b}$). By Definition~\ref{def:Sigma-D} (with $m=|a|$ and $n=|b|$) and the inequality from above we get
\begin{multline*}
\Sigma D(|\iota_c^{c\cup d}|)(\Sigma D(m)+1+\sigma)=\Sigma D(|a\cup b|)+1+D(|\iota_a^{a\cup b}|)(\sigma)<\\
<\Sigma D(|a\cup b|)+1+D(|\iota_b^{a\cup b}|)(\tau)=\Sigma D(|\iota_d^{c\cup d}|)(\Sigma D(n)+1+\tau).
\end{multline*}
In view of Definition~\ref{def:dil-extend} this yields
\begin{equation*}
\overline\xi_\alpha\left((\sigma;\alpha_0,\dots,\alpha_{m-1};\alpha)_D\right)<_{\overline{\Sigma D}(\alpha+1)}\overline\xi_\alpha\left((\tau;\beta_0,\dots,\beta_{n-1};\alpha)_D\right),
\end{equation*}
as desired. In particular, $\overline\xi_\alpha$ is injective. According to Definition~\ref{def:normal-extend}, the natural transformation~$\mu:I\Rightarrow\Sigma D$ induces functions~$\overline\mu_\gamma:\gamma\to\overline{\Sigma D}(\gamma)$. We now show
\begin{equation*}
\rng(\xi_\alpha)=\{\rho\in\overline{\Sigma D}(\alpha+1)\,|\,\overline\mu_{\alpha+1}(\alpha)<_{\overline{\Sigma D}(\alpha+1)}\rho\}.
\end{equation*}
For~$\subseteq$ we use equation~(\ref{eq:normality-extended-overline}) to get~$\overline\mu_{\alpha+1}(\alpha)\leq_{\overline{\Sigma D}(\alpha+1)}\xi_\alpha(\pi)$ for any~$\pi\in\overline D(\alpha)$ (the point is that values of~$\xi_\alpha$ have the form $(\,\cdot\,;\,\cdot\,,\ldots,\,\cdot\,,\alpha;\alpha+1)_{\Sigma D}$ with component~$\alpha$). It remains to show $\overline\mu_{\alpha+1}(\alpha)\neq\xi_\alpha(\pi)$. This holds because we have
\begin{equation*}
\overline\mu_{\alpha+1}(\alpha)=(\mu_1(0);\alpha;\alpha+1)_{\Sigma D}=(\Sigma D(0);\alpha;\alpha+1)_{\Sigma D},
\end{equation*}
while values of~$\xi_\alpha$ have first entries of the form~$\Sigma D(n)+1+\sigma\neq\Sigma D(0)$. To establish the implication~$\supseteq$ of the equality above, we consider an arbitrary element
\begin{equation*}
\rho=(\rho_0;\alpha_0,\dots,\alpha_{k-1};\alpha+1)_{\Sigma D}\in\overline{\Sigma D}(\alpha+1).
\end{equation*}
Assuming $\overline\mu_{\alpha+1}(\alpha)<_{\overline{\Sigma D}(\alpha+1)}\rho$, we can once again invoke equation~(\ref{eq:normality-extended-overline}) to get $k>0$ and $\alpha_{k-1}=\alpha$. Let us observe that $\rho_0$ cannot be of the form $\Sigma D(n)$, since this would yield $(\Sigma D(n),k)\in\tr(\Sigma D)$, hence $k=\supp^{\Sigma D}_k(\Sigma D(n))=\{n\}$, then $n=0$ and $k=1$, and finally $\rho=(\Sigma D(0);\alpha;\alpha+1)_D=\overline\mu_{\alpha+1}(\alpha)$. This means that we must have $\rho_0=\Sigma D(n)+1+\rho_1$ for some $n<k$ and $\rho_1\in D(n)$. We again get
\begin{equation*}
k=\supp^{\Sigma D}_k(\Sigma D(n)+1+\rho_1)=\{n\}\cup\supp^D_n(\rho_1),
\end{equation*}
which entails $k=n+1$ and $\supp^D_n(\rho_1)=n$, so that we have $(\rho_1,n)\in\tr(D)$. Due to the latter, we may consider
\begin{equation*}
\rho':=(\rho_1;\alpha_0,\dots,\alpha_{n-1};\alpha)_D\in\overline D(\alpha).
\end{equation*}
By construction we have $\rho=\xi_\alpha(\rho')\in\rng(\xi_\alpha)$, as desired. Assuming that $D$ is a dilator, we can now show that the same holds for~$\Sigma D$. Towards a contradiction we assume that $f:\mathbb N\to\overline{\Sigma D}(\beta)$ is strictly decreasing, for some $\beta\in\ord$. Note that we must have~$\beta>0$ (as $\overline{\Sigma D}(0)=\Sigma D(0)=0$), and that $\overline\mu_\beta(0)=(\Sigma D(0);0;\beta)_{\Sigma D}$ is the smallest element of~$\overline{\Sigma D(\beta)}$ (using Definition~\ref{def:dil-extend}). We may thus consider the minimal~$\alpha<\beta$ such that $\overline\mu_\beta(\alpha)\leq_{\overline{\Sigma D}(\beta)}f(n)$ holds for all~$n\in\mathbb N$. Since~$f$ is strictly increasing, we must have
\begin{equation*}
\overline\mu_\beta(\alpha)<_{\overline{\Sigma D}(\beta)}f(n)<_{\overline{\Sigma D}(\beta)}\overline\mu_\beta(\alpha+1)
\end{equation*}
for all sufficiently large~$n$ (where the second inequality is dropped in case~$\beta=\alpha+1$). Possibly after shifting~$f$, we may assume that these inequalities hold for all~$n\in\mathbb N$. If $\iota_{\alpha+1}^\beta:\alpha+1\hookrightarrow\beta$ is the inclusion, $\overline{\Sigma D}(\iota_{\alpha+1}^\beta):\overline{\Sigma D}(\alpha+1)\to\overline{\Sigma D}(\beta)$ has range
\begin{equation*}
\rng(\overline{\Sigma D}(\iota_{\alpha+1}^\beta))=\{\rho\in\overline{\Sigma D}(\beta)\,|\,\rho<_{\overline{\Sigma D}(\beta)}\overline\mu_\beta(\alpha+1)\},
\end{equation*}
as in the discussion that follows Definition~\ref{def:normal-extend}. We thus get a strictly decreasing function $g:\mathbb N\to\overline{\Sigma D}(\alpha+1)$ with $\overline{\Sigma D}(\iota_{\alpha+1}^\beta)\circ g=f$. In view of
\begin{multline*}
\overline{\Sigma D}(\iota_{\alpha+1}^\beta)(\overline\mu_{\alpha+1}(\alpha))=\overline{\Sigma D}(\iota_{\alpha+1}^\beta)((\Sigma D(0);\alpha;\alpha+1)_{\Sigma D})=\\
=(\Sigma D(0);\iota_{\alpha+1}^\beta(\alpha);\beta)_{\Sigma D}=(\Sigma D(0);\alpha;\beta)_{\Sigma D}=\overline\mu_\beta(\alpha)
\end{multline*}
we have $\overline\mu_{\alpha+1}(\alpha)<_{\overline{\Sigma D}(\alpha+1)}g(n)$ for all~$n\in\mathbb N$. Now the above allows us to specify a strictly increasing function $h:\mathbb N\to\overline D(\alpha)$ by stipulating $\xi_\alpha\circ h=g$. This contradicts the assumption that~$D$ is a dilator. Once we know that $D$ and $\Sigma D$ are dilators, we can consider the isomorphisms $\eta^D_\gamma:D(\gamma)\to\overline D(\gamma)$ and $\eta^{\Sigma D}_\gamma:\Sigma D(\gamma)\to\overline{\Sigma D}(\gamma)$ from Definition~\ref{def:dil-ordinals}, where $D(\gamma)$ and $\Sigma D(\gamma)$ are ordinals. Let $\xi_\alpha:D(\alpha)\to\Sigma D(\alpha+1)$ be determined by $\eta^{\Sigma D}_{\alpha+1}\circ\xi_\alpha=\overline\xi_\alpha\circ\eta^D_\alpha$. The claim that we have
\begin{equation*}
\rng(\xi_\alpha)=\{\delta\in\ord\,|\,\Sigma D(\alpha)<\delta<\Sigma D(\alpha+1)\}
\end{equation*}
can be derived from the corresponding result about $\overline\xi_\alpha$, using $\eta^{\Sigma D}_{\alpha+1}\circ\mu_{\alpha+1}=\overline\mu_{\alpha+1}$ (by Definition~\ref{def:normal-extend}) and $\mu_{\alpha+1}(\alpha)=\Sigma D(\alpha)$ (by the discussion after that definition). Finally, for~$\gamma<D(\alpha)$ with $\eta^D_\alpha(\gamma)=(\sigma;\alpha_0,\dots,\alpha_{n-1};\alpha)_D$ we have
\begin{equation*}
\eta^{\Sigma D}_\alpha(\xi_\alpha(\gamma))=\overline\xi_\alpha(\eta^D_\alpha(\gamma))=(\Sigma D(n)+1+\sigma;\alpha_0,\dots,\alpha_{n-1},\alpha;\alpha+1)_{\Sigma D}.
\end{equation*}
By Definition~\ref{def:dil-ordinals} we obtain
\begin{equation*}
\supp^{\Sigma D}_\alpha(\xi_\alpha(\gamma))=\{\alpha_0,\dots,\alpha_{n-1}\}\cup\{\alpha\}=\supp^D_\alpha(\gamma)\cup\{\alpha\},
\end{equation*}
as claimed in the proposition.
\end{proof}

Given a normal dilator~$E$, Definition~\ref{def:representation} and Proposition~\ref{prop:representations-exist-unique} provide unique representations $\gamma\simeq(\sigma;\gamma_0,\dots,\gamma_{n-1})_E$ of all ordinals. We point out that $\gamma\geq E(0)$ is equivalent to $n>0$, by the same proposition. In the present section we only consider $E=\Sigma D$, where $\Sigma D(0)=0\leq\gamma$ is automatic. The general case will be needed in the next section.

\begin{definition}\label{def:fund-seq}
Let $E$ be a normal dilator. Given an ordinal~$\gamma\geq E(0)$, we define
\begin{equation*}
\gamma^*:=\sup\{\gamma_i+1\,|\,i<n\}\quad\text{for }\gamma\simeq(\sigma;\gamma_0,\dots,\gamma_n)_E.
\end{equation*}
If we have $\delta\geq\gamma^*$, then we define $\gamma[\delta]\in\ord$ by stipulating
\begin{equation*}
\gamma[\delta]\simeq(\sigma;\gamma_0,\dots,\gamma_{n-1},\delta)_E,
\end{equation*}
where we still assume~$\gamma\simeq(\sigma;\gamma_0,\dots,\gamma_n)_E$.
\end{definition}

Parts~(a) and~(d) of the following are used in our construction of a Bachmann-Howard collapse (cf.~Theorem~\ref{thm:patterns-to-collapse}). The other parts are needed for the next section.

\begin{lemma}\label{lem:fund-basic}
The following holds for any normal dilator~$E$ and all~$\beta,\gamma\geq E(0)$:
\begin{enumerate}[label=(\alph*)]
\item We have $E(\delta)\leq\gamma[\delta]<E(\delta+1)$ for any ordinal~$\delta\geq\gamma^*$.
\item From $E(\delta)\leq\gamma<E(\delta+1)$ we can infer $\delta\geq\gamma^*$ and $\gamma[\delta]=\gamma$.
\item For $\delta,\rho\geq\gamma^*$ we have $\gamma[\delta]^*=\gamma^*$ and $\gamma[\delta][\rho]=\gamma[\rho]$.
\item If we have $\delta\geq\max\{\beta^*,\gamma^*\}$ and $E(\rho)\leq\beta,\gamma<E(\rho+1)$ for some~$\rho$, then
\begin{equation*}
\beta<\gamma\quad\Leftrightarrow\quad\beta[\delta]<\gamma[\delta].
\end{equation*}
\item If we have $E(\rho)\leq\gamma<\gamma+1<E(\rho+1)$ with $\rho\geq\delta\geq\max\{\gamma^*,(\gamma+1)^*\}$, then we have $(\gamma+1)[\delta]=\gamma[\delta]+1$.
\item We have $E(\rho)^*=0$ and $E(\rho)[\delta]=E(\delta)$, for arbitrary ordinals~$\rho$ and $\delta$.
\item Assume that we have $E(\rho)<\gamma<E(\rho+1)$ with $\rho\geq\delta\geq\gamma^*$. If $\gamma$ and $\delta$ are limit ordinals, then so is~$\gamma[\delta]$.
\end{enumerate}
\end{lemma}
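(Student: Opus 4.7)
The proof will treat each part in order. Parts (a)--(c) are essentially bookkeeping: the representation of $\gamma[\delta]$ (from Definition~\ref{def:fund-seq}) has last component $\delta$, placing it in $[E(\delta),E(\delta+1))$ via Proposition~\ref{prop:representations-exist-unique} (this is~(a)); the converse direction of the same equivalence, together with uniqueness of representations, yields~(b); and (c) is immediate because substituting the last component preserves the other components and hence $\gamma^*$. Part~(f) is computed directly from $E(\rho)\simeq(\mu_1(0);\rho)_E$ (a representation already recorded after Definition~\ref{def:normal-extend}), which has an empty list of earlier components.

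The key technical step is part~(d). By Definition~\ref{def:dil-extend}, the inequality $\beta<\gamma$ in $\overline E(\rho+1)$ reduces to a comparison in $E(|c\cup d|)$ between the pulled-back values $E(|\iota_c^{c\cup d}|)(\sigma)$ and $E(|\iota_d^{c\cup d}|)(\tau)$, where $c,d$ are the full argument tuples of $\beta,\gamma$ (each ending in $\rho$). Under the assumption $\delta\geq\max(\beta^*,\gamma^*)$, the map sending $\rho\to\delta$ and fixing the other entries (which sit strictly below $\delta$) is an order-isomorphism $c\cup d\to c'\cup d'$ that preserves cardinalities and the relevant inclusions; hence $\beta<\gamma$ and $\beta[\delta]<\gamma[\delta]$ reduce to the same inequality in $E(|c\cup d|)=E(|c'\cup d'|)$. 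Part~(e) then follows cleanly from (a), (b) and (d): both $\gamma[\delta]$ and $(\gamma+1)[\delta]$ lie in $[E(\delta),E(\delta+1))$ and (d) forces $\gamma[\delta]<(\gamma+1)[\delta]$; any intermediate $\xi$ would, via (b), satisfy $\xi^*\leq\delta$, lift to $\xi[\rho]\in[E(\rho),E(\rho+1))$ through (a), and by (d) produce an ordinal strictly between $\gamma$ and $\gamma+1$.

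Part~(g) is where I expect the main difficulty. Suppose for contradiction that $\gamma[\delta]=\eta+1$. By (f) and (d) applied to the pair $E(\rho)<\gamma$, one has $E(\delta)<\gamma[\delta]$, so $\eta\geq E(\delta)$; part (b) then supplies $\eta\simeq(\upsilon;\eta_0,\dots,\eta_{l-1},\delta)_E$ with $\eta^*\leq\delta$, which lifts via (a) and (c) to $\eta[\rho]\in[E(\rho),E(\rho+1))$ with $\eta[\rho][\delta]=\eta$. Applying (d) at level $\delta$ gives $\eta[\rho]<\gamma$, and since $\gamma$ is a limit we strengthen this to $\eta[\rho]+1<\gamma$. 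The plan is then to apply (e) at level $\rho$ to the adjacent pair $(\eta[\rho],\eta[\rho]+1)$, which would produce $(\eta[\rho]+1)[\delta]=\eta+1=\gamma[\delta]$; combined with (d), this would force $\eta[\rho]+1=\gamma$, contradicting that $\gamma$ is a limit. The genuine obstacle is the side condition $(\eta[\rho]+1)^*\leq\delta$ required by (e): this does not follow formally from the earlier parts and must be verified by tracking representations through the successor operation. For this I intend to invoke the normality condition~(\ref{eq:normality-extended-overline}) --- which characterises ordinals strictly below $E(\delta+1)$ in $\overline E(\rho+1)$ as those whose supports are contained in $\delta+1$ --- together with the hypothesis that $\delta$ itself is a limit, so as to rule out that the immediate ordinal successor of $\eta[\rho]$ introduces any new support point in the gap $[\delta,\rho)$.
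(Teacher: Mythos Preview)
Your treatment of parts (a)--(f) is correct and matches the paper's approach; in particular your argument for~(d) via the finite comparison in Definition~\ref{def:dil-extend} is a mild rephrasing of the paper's use of a global embedding $f:\pi\to\pi$, and your argument for~(e) is exactly the paper's.

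The gap is in part~(g). You have correctly located the obstacle: you need $(\eta[\rho]+1)^*\leq\delta$ to invoke~(e), and this does \emph{not} follow from normality or from $\delta$ being a limit. The normality condition~(\ref{eq:normality-extended-overline}) only tells you that an element lies below $E(\delta')$ iff \emph{all} its support components (including the last one,~$\rho$) are below~$\delta'$; since $\rho\geq\delta$, this gives no information about the non-top components of $\eta[\rho]+1$. In general, passing to the ordinal successor inside the block $[E(\rho),E(\rho+1))$ can introduce support points anywhere in $[0,\rho)$, so $(\eta[\rho]+1)^*$ may well land in $[\delta,\rho)$.

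The paper circumvents this entirely. Rather than controlling the support of a specific successor, it takes an \emph{arbitrary} $\beta$ with $\eta[\rho]<\beta<\gamma$ and then \emph{compresses} its support: writing $\beta\simeq(\sigma;\beta_0,\dots,\beta_{n-1},\rho)_E$ and setting $\xi:=\max\{\eta^*,\gamma^*\}<\delta$, one re-indexes the large components $\beta_i,\dots,\beta_{n-1}\in[\xi,\rho)$ to consecutive values $\xi,\xi+1,\dots$ via an order-embedding $f:\zeta+1\to\rho+1$ with $\zeta:=\xi+(n-i)<\delta$ (here $\delta$ being a limit is used). Functoriality of~$E$ produces a $\beta'\in[E(\zeta),E(\zeta+1))$ with $E(f)(\beta')=\beta$, and likewise $E(f)$ sends $\eta[\zeta],\gamma[\zeta]$ to $\eta[\rho],\gamma$; pulling back the strict inequalities gives $\eta[\zeta]<\beta'<\gamma[\zeta]$, whence $\eta=\eta[\zeta][\delta]<\beta'[\delta]<\gamma[\zeta][\delta]=\gamma[\delta]=\eta+1$, the desired contradiction. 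The point is that $\beta'$ is built to have $(\beta')^*\leq\zeta<\delta$ by construction, so no uncontrolled successor is ever needed.
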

\begin{proof}
Parts~(a) to~(c) are easy consequences of Proposition~\ref{prop:representations-exist-unique} and Definition~\ref{def:fund-seq}.

(d) Choose an additively closed ordinal number~$\pi>\max\{\delta,\rho\}$, consider the isomorphism $\eta_\pi:E(\pi)\to\overline E(\pi)$ from Definition~\ref{def:dil-ordinals}, and write
\begin{equation*}
\eta_\pi(\beta)=(\sigma;\beta_0,\dots,\beta_m;\pi)_E\quad\text{and}\quad\eta_\pi(\gamma)=(\tau;\gamma_0,\dots,\gamma_n;\pi)_E.
\end{equation*}
In view of Definition~\ref{def:representation} and Proposition~\ref{prop:representations-exist-unique} we have $\beta_m=\rho=\gamma_n$. Also note
\begin{equation*}
\eta_\pi(\beta[\delta])=(\sigma;\beta_0,\dots,\beta_{m-1},\delta;\pi)_E\quad\text{and}\quad\eta_\pi(\gamma[\delta])=(\tau;\gamma_0,\dots,\gamma_{n-1},\delta;\pi)_E.
\end{equation*}
Assume $\delta\leq\rho$ (the case $\rho<\delta$ being analogous), and define $f:\pi\to\pi$ by
\begin{equation*}
f(\alpha):=\begin{cases}
\alpha & \text{if $\alpha<\delta$},\\
\rho+\beta & \text{if $\alpha=\delta+\beta<\pi$}.
\end{cases}
\end{equation*}
In view of Definition~\ref{def:dil-extend} we get
\begin{equation*}
\overline E(f)\left(\eta_\pi(\beta[\delta])\right)=\eta_\pi(\beta)\quad\text{and}\quad\overline E(f)\left(\eta_\pi(\gamma[\delta])\right)=\eta_\pi(\gamma).
\end{equation*}
Now the claim follows since $\overline E(f)$ is an order embedding (cf.~\cite[Lemma~2.2]{freund-computable}).

(e) Part~(d) yields $\gamma[\delta]+1\leq(\gamma+1)[\delta]$. We thus have $\eta:=\gamma[\delta]+1<E(\delta+1)$ and hence $\eta^*\leq\delta\leq\rho$, by part~(b). By the latter and part~(c) we get $\eta=\eta[\delta]=\eta[\rho][\delta]$. We now derive a contradiction from $\gamma[\delta]+1<(\gamma+1)[\delta]$. The latter yields
\begin{equation*}
\gamma[\delta]<\eta[\rho][\delta]<(\gamma+1)[\delta].
\end{equation*}
As~(a) provides $E(\rho)\leq\eta[\rho]<E(\rho+1)$, we can invoke~(d) to get $\gamma<\eta[\rho]<\gamma+1$, which is indeed impossible.

(f) Recall that~$E$ comes with a natural transformation $\mu:I\Rightarrow E$ that induces a function $\mu_{\rho+1}:\rho+1\to E(\rho+1)$ with $\mu_{\rho+1}(\rho)=E(\rho)$, by Definition~\ref{def:normal-extend} and the discussion that follows it. We compute
\begin{equation*}
\eta_{\rho+1}(E(\rho))=\eta_{\rho+1}(\mu_{\rho+1}(\rho))=\overline\mu_{\rho+1}(\rho)=(\mu_1(0);\rho;\rho+1)_E.
\end{equation*}
This yields $E(\rho)\simeq(\mu_1(0);\rho)_E$, which makes the claims obvious.

(g) By the previous parts we get $E(\delta)=E(\rho)[\delta]<\gamma[\delta]<E(\delta+1)$. Aiming at a contradiction, assume that we have $\gamma[\delta]=\alpha+1$ with~$E(\delta)\leq\alpha<E(\delta+1)$. The latter yields $\alpha^*\leq\delta$, then $\alpha[\rho][\delta]=\alpha[\delta]=\alpha<\gamma[\delta]$, and finally $\alpha[\rho]<\gamma$. Now the assumption that~$\gamma$ is a limit allows us to pick an ordinal~$\beta$ with $\alpha[\rho]<\beta<\gamma$. Since~$\delta$ is a limit, we have $\xi:=\max\{\alpha^*,\gamma^*\}<\delta$. Write $\beta\simeq(\sigma;\beta_0,\dots,\beta_{n-1},\rho)_E$, and let $i\leq n$ be such that $\beta_{i-1}<\xi\leq\beta_i$ (where $\beta_{-1}:=-1<\xi$ and $\beta_n:=\rho>\xi$). Now set $\zeta:=\xi+n-i<\delta$, and define a strictly increasing $f:\zeta+1\to\rho+1$ by
\begin{equation*}
f(\alpha):=\begin{cases}
\alpha & \text{if $\alpha<\xi$},\\
\beta_{i+j} & \text{if $\alpha=\xi+j$ with $j<n-i$},\\
\rho & \text{if $\alpha=\xi+n-i=\zeta$}.
\end{cases}
\end{equation*}
For $\eta_{\zeta+1}:E(\zeta+1)\to\overline E(\zeta+1)$ as in Definition~\ref{def:dil-ordinals}, we define $\beta'<E(\zeta+1)$ by
\begin{equation*}
\eta_{\zeta+1}(\beta')=(\sigma;\beta_0,\dots,\beta_{i-1},\xi+i-i,\dots,\xi+n-i;\zeta+1)_E.
\end{equation*}
It is straightforward to verify $E(f)(\beta')=\beta$ (use $\eta_{\rho+1}\circ E(f)=\overline E(f)\circ\eta_{\zeta+1}$). In view of~$\alpha^*,\gamma^*\leq\xi$ we also get $E(f)(\alpha[\zeta])=\alpha[\rho]$ and $E(f)(\gamma[\zeta])=\gamma$. Since $E(f)$ is an order embedding, we can conclude $\alpha[\zeta]<\beta'<\gamma[\zeta]$ and then
\begin{equation*}
\alpha=\alpha[\delta]=\alpha[\zeta][\delta]<\beta'[\delta]<\gamma[\zeta][\delta]=\gamma[\delta],
\end{equation*}
which is the desired contradiction with~$\gamma[\delta]=\alpha+1$.
\end{proof}

Let us also relate the new notation to the functions $\xi_\alpha:D(\alpha)\to\Sigma D(\alpha+1)$ from Proposition~\ref{prop:D-into-SigmaD}. We note that $\xi_\alpha(\gamma)^*$ is computed with respect to~$E=\Sigma D$.

\begin{lemma}\label{lem:supp-star}
For each $\gamma<D(\alpha)$ we have $\xi_\alpha(\gamma)^*=\min\{\delta\in\ord\,|\,\supp^D_\alpha(\gamma)\subseteq\delta\}$.
\end{lemma}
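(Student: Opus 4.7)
The plan is to verify the equality by a direct computation, unfolding the relevant definitions. Fix $\gamma<D(\alpha)$ and write $\eta^D_\alpha(\gamma)=(\sigma;\alpha_0,\dots,\alpha_{n-1};\alpha)_D$ with $\alpha_0<\dots<\alpha_{n-1}<\alpha$. By Definition~\ref{def:dil-ordinals} this immediately gives $\supp^D_\alpha(\gamma)=\{\alpha_0,\dots,\alpha_{n-1}\}$, so the right-hand side of the claim is
$$\min\{\delta\in\ord\,|\,\{\alpha_0,\dots,\alpha_{n-1}\}\subseteq\delta\}=\sup\{\alpha_i+1\,|\,i<n\},$$
with the convention that the supremum of the empty set is $0$ (to cover $n=0$).

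Next I would compute the left-hand side. The construction of $\overline\xi_\alpha$ in the proof of Proposition~\ref{prop:D-into-SigmaD}, together with the defining equation $\eta^{\Sigma D}_{\alpha+1}\circ\xi_\alpha=\overline\xi_\alpha\circ\eta^D_\alpha$, yields
$$\eta^{\Sigma D}_{\alpha+1}(\xi_\alpha(\gamma))=(\Sigma D(n)+1+\sigma;\alpha_0,\dots,\alpha_{n-1},\alpha;\alpha+1)_{\Sigma D},$$
where $(\Sigma D(n)+1+\sigma,n+1)\in\tr(\Sigma D)$ was already verified in that same proof. By Definition~\ref{def:representation} (justified by Lemma~\ref{lem:normal-representation-independent}), this translates to the $\simeq$-representation $\xi_\alpha(\gamma)\simeq(\Sigma D(n)+1+\sigma;\alpha_0,\dots,\alpha_{n-1},\alpha)_{\Sigma D}$, whose final component is $\alpha$ and whose preceding $n$ components are $\alpha_0<\dots<\alpha_{n-1}$. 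Reading off Definition~\ref{def:fund-seq} with $E=\Sigma D$ gives $\xi_\alpha(\gamma)^*=\sup\{\alpha_i+1\,|\,i<n\}$, which is precisely the expression computed for the right-hand side above.

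There is no real obstacle here; the argument is essentially a definition chase. The only point requiring care is the indexing convention in Definition~\ref{def:fund-seq}: the supremum is taken over the components \emph{strictly preceding} the final one (which is the slot replaced in $\gamma[\delta]$). Once this is matched correctly with the observation that, in the $\Sigma D$-representation of $\xi_\alpha(\gamma)$, it is exactly the trailing entry~$\alpha$ that plays the role of that ``outer'' ordinal, both sides reduce to $\sup\{\alpha_i+1\,|\,i<n\}$ and the equality is immediate.
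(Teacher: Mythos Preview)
Your proof is correct and is essentially the same definition chase as the paper's. The only cosmetic difference is that you appeal to the explicit formula for $\overline\xi_\alpha$ from the proof of Proposition~\ref{prop:D-into-SigmaD} to write out the $\Sigma D$-representation of $\xi_\alpha(\gamma)$ directly, whereas the paper instead invokes the support identity $\supp^{\Sigma D}_{\alpha+1}(\xi_\alpha(\gamma))=\{\alpha\}\cup\supp^D_\alpha(\gamma)$ stated in that proposition; either way one arrives at $\xi_\alpha(\gamma)^*=\sup\{\alpha_i+1\mid i<n\}$.
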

\begin{proof}
In view of $\Sigma D(\alpha)<\xi_\alpha(\gamma)<\Sigma D(\alpha+1)$ we get
\begin{equation*}
\eta^{\Sigma D}_{\alpha+1}(\xi_\alpha(\gamma))=(\sigma;\gamma_0,\dots,\gamma_n;\alpha+1)_{\Sigma D}
\end{equation*}
with $\gamma_0<\ldots<\gamma_n=\alpha$ (cf.~Proposition~\ref{prop:representations-exist-unique}). According to Definition~\ref{def:dil-ordinals} we have
\begin{equation*}
\supp^{\Sigma D}_{\alpha+1}(\xi_\alpha(\gamma))=\{\gamma_0,\dots,\gamma_n\}=\{\gamma_0,\dots,\gamma_{n-1}\}\cup\{\alpha\}.
\end{equation*}
By Proposition~\ref{prop:D-into-SigmaD} we get $\supp^D_\alpha(\gamma)=\{\gamma_0,\dots,\gamma_{n-1}\}$, so that the claim follows from the definition of~$\xi_\alpha(\gamma)^*$.
\end{proof}

Finally, we establish the promised connection between patterns of resemblance and Bachmann-Howard fixed points. Recall that an ordinal~$\alpha$ is such a fixed point if there is a function~$\vartheta:D(\alpha)\to\alpha$ as in statement~(i) from Theorem~\ref{thm:from-freund-equivalence}.

\begin{theorem}\label{thm:patterns-to-collapse}
Let~$D$ be a dilator. If we have $\alpha\leq_1^{\Sigma D}\Sigma D(\alpha+1)$, then $\alpha$ is a Bachmann-Howard fixed point of~$D$.
\end{theorem}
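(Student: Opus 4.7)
The plan is to verify that the candidate map
\[
\vartheta(\gamma):=\min\bigl\{\eta<\alpha\bigm|\{\gamma_0,\dots,\gamma_{n-2}\}\subseteq\eta\text{ and }\eta\leq_1^{\Sigma D}(\sigma;\gamma_0,\dots,\gamma_{n-2},\eta)_{\Sigma D}\bigr\}
\]
sketched in the introduction satisfies the two conditions of Theorem~\ref{thm:from-freund-equivalence}. Here, for fixed $\gamma<D(\alpha)$, we use the unique representation $\xi_\alpha(\gamma)\simeq(\sigma;\gamma_0,\dots,\gamma_{n-1})_{\Sigma D}$ supplied by Proposition~\ref{prop:representations-exist-unique}. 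Proposition~\ref{prop:D-into-SigmaD} together with Lemma~\ref{lem:supp-star} ensures $n\geq 1$, $\gamma_{n-1}=\alpha$, and $\{\gamma_0,\dots,\gamma_{n-2}\}=\supp^D_\alpha(\gamma)$, so the first conjunct of the definition already secures condition~(b). Well-definedness will come from the $\Sigma_1$-formula
\[
\psi_\gamma(y_0,\dots,y_{n-2}):=\exists\eta\,\bigl[y_0<\eta\land\dots\land y_{n-2}<\eta\land\eta\leq_1^{\Sigma D}(\sigma;y_0,\dots,y_{n-2},\eta)_{\Sigma D}\bigr]:
\]
downward heredity of $\leq_1^{\Sigma D}$ (which is immediate from Definition~\ref{def:leq_1^D}), combined with $\alpha\leq\xi_\alpha(\gamma)\leq\Sigma D(\alpha+1)$, yields $\alpha\leq_1^{\Sigma D}\xi_\alpha(\gamma)$, so $\eta=\alpha$ witnesses $\psi_\gamma$ in $\Sigma D(\alpha+1)$; Proposition~\ref{prop:leq_1^D-Sigma_1-elem} then reflects $\psi_\gamma$ along $\alpha\leq_1^{\Sigma D}\Sigma D(\alpha+1)$ to produce a witness below~$\alpha$.

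For condition~(a), assume $\gamma<\delta<D(\alpha)$ and $\supp^D_\alpha(\gamma)\subseteq\vartheta(\delta)$, and write $\xi_\alpha(\delta)\simeq(\tau;\delta_0,\dots,\delta_{m-2},\alpha)_{\Sigma D}$ analogously. Lemma~\ref{lem:supp-star}, together with the hypothesis and the defining property of $\vartheta(\delta)$, yields $\vartheta(\delta)\geq\max\{\xi_\alpha(\gamma)^*,\xi_\alpha(\delta)^*\}$, so the substitutions $\xi_\alpha(\gamma)[\vartheta(\delta)]=(\sigma;\gamma_0,\dots,\gamma_{n-2},\vartheta(\delta))_{\Sigma D}$ and $\xi_\alpha(\delta)[\vartheta(\delta)]=(\tau;\delta_0,\dots,\delta_{m-2},\vartheta(\delta))_{\Sigma D}$ are legal. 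Since $\xi_\alpha$ is order-preserving, $\gamma<\delta$ gives $\xi_\alpha(\gamma)<\xi_\alpha(\delta)$, and both values lie in $[\Sigma D(\alpha),\Sigma D(\alpha+1))$, so Lemma~\ref{lem:fund-basic}(d) delivers $\xi_\alpha(\gamma)[\vartheta(\delta)]<\xi_\alpha(\delta)[\vartheta(\delta)]$. Applied once more to $\Sigma D(\alpha)<\xi_\alpha(\delta)$, with Lemma~\ref{lem:fund-basic}(f) evaluating $\Sigma D(\alpha)[\vartheta(\delta)]=\Sigma D(\vartheta(\delta))$, the same Lemma~\ref{lem:fund-basic}(d) gives $\Sigma D(\vartheta(\delta))<\xi_\alpha(\delta)[\vartheta(\delta)]$; normality of $\Sigma D$ then secures the crucial strict chain $\vartheta(\delta)\leq\Sigma D(\vartheta(\delta))<\xi_\alpha(\delta)[\vartheta(\delta)]$.

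To finish, the plan is to chain two reflections. The definition of $\vartheta(\delta)$ supplies $\vartheta(\delta)\leq_1^{\Sigma D}\xi_\alpha(\delta)[\vartheta(\delta)]$; combined with the estimate $\vartheta(\delta)\leq\xi_\alpha(\gamma)[\vartheta(\delta)]\leq\xi_\alpha(\delta)[\vartheta(\delta)]$ (where the left inequality follows from Lemma~\ref{lem:fund-basic}(a) and normality), downward heredity yields $\vartheta(\delta)\leq_1^{\Sigma D}\xi_\alpha(\gamma)[\vartheta(\delta)]$. Hence $\eta=\vartheta(\delta)$ witnesses $\psi_\gamma(\gamma_0,\dots,\gamma_{n-2})$ inside $\xi_\alpha(\delta)[\vartheta(\delta)]$---and here the \emph{strict} inequality $\vartheta(\delta)<\xi_\alpha(\delta)[\vartheta(\delta)]$ is indispensable. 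Reflecting along $\vartheta(\delta)\leq_1^{\Sigma D}\xi_\alpha(\delta)[\vartheta(\delta)]$ delivers some $\eta'<\vartheta(\delta)$ that witnesses $\psi_\gamma$, and minimality of $\vartheta(\gamma)$ then forces $\vartheta(\gamma)\leq\eta'<\vartheta(\delta)$. The main obstacle, I expect, is precisely the bookkeeping that secures the strict inequality $\vartheta(\delta)<\xi_\alpha(\delta)[\vartheta(\delta)]$; without it the reflection would yield only $\vartheta(\gamma)\leq\vartheta(\delta)$, which is too weak for~(a).
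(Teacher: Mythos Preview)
Your proof is correct and follows essentially the same route as the paper's: the same definition of~$\vartheta$, the same verification of condition~(b) via Lemma~\ref{lem:supp-star}, and for condition~(a) the same strict chain $\vartheta(\delta)\leq\Sigma D(\vartheta(\delta))\leq\xi_\alpha(\gamma)[\vartheta(\delta)]<\xi_\alpha(\delta)[\vartheta(\delta)]$ followed by a reflection along $\vartheta(\delta)\leq_1^{\Sigma D}\xi_\alpha(\delta)[\vartheta(\delta)]$. The only cosmetic difference is that you phrase the reflections in terms of the $\Sigma_1$-formula $\psi_\gamma$ via Proposition~\ref{prop:leq_1^D-Sigma_1-elem}, whereas the paper applies Definition~\ref{def:leq_1^D} directly to the finite sets $X=\{\gamma_0,\dots,\gamma_{n-2}\}$ and $Y=\{\vartheta(\delta),\xi_\alpha(\gamma)[\vartheta(\delta)]\}$; these are equivalent by that very proposition.
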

\begin{proof}
In order to construct a Bachmann-Howard collapse~$\vartheta:D(\alpha)\to\alpha$, we consider an arbitrary ordinal~$\gamma<D(\alpha)$. Let $\xi_\alpha:D(\alpha)\to\Sigma D(\alpha+1)$ be the function from Proposition~\ref{prop:D-into-SigmaD}. Due to~$\Sigma D(\alpha)<\xi_\alpha(\gamma)<\Sigma D(\alpha+1)$, Proposition~\ref{prop:representations-exist-unique} yields a representation of the form
\begin{equation*}
\xi_\alpha(\gamma)\simeq(\sigma;\gamma_0,\dots,\gamma_n)_{\Sigma D}\quad\text{with }\gamma_0<\ldots<\gamma_n=\alpha.
\end{equation*}
From $\alpha\leq_1^{\Sigma D}\Sigma D(\alpha+1)$ and $\alpha\leq\Sigma D(\alpha)<\xi_\alpha(\gamma)<\Sigma D(\alpha+1)$ we get~$\alpha\leq_1^{\Sigma D}\xi_\alpha(\gamma)$, as a glance at Definition~\ref{def:leq_1^D} reveals. Now we apply the latter to $\alpha\leq_1^{\Sigma D}\Sigma D(\alpha+1)$ and the sets $X=\{\gamma_0,\dots,\gamma_{n-1}\}\subseteq\alpha$ and $Y=\{\alpha,\xi_\alpha(\gamma)\}\in\Sigma D(\alpha+1)\backslash\alpha$. This yields a set $\widetilde Y=\{\delta,\eta\}\subseteq\alpha$ such that we have
\begin{equation*}
\eta\simeq(\sigma;\gamma_0,\dots,\gamma_{n-1},\delta)_{\Sigma D}\quad\text{and}\quad\delta\leq_1^{\Sigma D}\eta.
\end{equation*}
The first conjunct does, in particular, entail $\{\gamma_0,\dots,\gamma_{n-1}\}\subseteq\delta$. In the notation from Definition~\ref{def:fund-seq} we have $\delta\geq\xi_\alpha(\gamma)^*$ and $\eta=\xi_\alpha(\gamma)[\delta]$. We can thus set
\begin{equation*}
\vartheta(\gamma):=\min\{\delta<\alpha\,|\,\delta\geq\xi_\alpha(\gamma)^*\text{ and }\delta\leq_1^{\Sigma D}\xi_\alpha(\gamma)[\delta]\}.
\end{equation*}
Concerning the implementation in our base theory~$\textsf{ATR}_0^{\textsf{set}}$, we point out that $\xi_\alpha(\gamma)^*$ and $\xi_\alpha(\gamma)[\delta]$ can be computed with parameter $\eta^{\Sigma D}_{\alpha+1}:\Sigma D(\alpha+1)\to\overline{\Sigma D}(\alpha+1)$ (which is needed to determine representations). It remains to verify conditions~(a) and~(b) from Theorem~\ref{thm:from-freund-equivalence}. By Lemma~\ref{lem:supp-star} and the definition of~$\vartheta$ we get
\begin{equation*}
\supp^D_\alpha(\gamma)\subseteq\xi_\alpha(\gamma)^*\leq\vartheta(\gamma),
\end{equation*}
which is condition~(b). To establish condition~(a), consider ordinals $\gamma<\gamma'<D(\alpha)$ with~$\supp^D_\alpha(\gamma)\subseteq\vartheta(\gamma')$. The latter yields $\vartheta(\gamma')\geq\xi_\alpha(\gamma)^*$, again by Lemma~\ref{lem:supp-star}. Due to the definition of~$\vartheta$, we also have $\vartheta(\gamma')\geq\xi_\alpha(\gamma')^*$. Using properties of $\xi_\alpha$, we can derive $\Sigma D(\alpha)<\xi_\alpha(\gamma)<\xi_\alpha(\gamma')<\Sigma D(\alpha+1)$ and then
\begin{equation*}
\vartheta(\gamma')\leq\Sigma D(\vartheta(\gamma'))\leq\xi_\alpha(\gamma)[\vartheta(\gamma')]<\xi_\alpha(\gamma')[\vartheta(\gamma')],
\end{equation*}
by Lemma~\ref{lem:fund-basic}. Also by the definition of~$\vartheta$, we get $\vartheta(\gamma')\leq_1^{\Sigma D}\xi_\alpha(\gamma')[\vartheta(\gamma')]$ and then $\vartheta(\gamma')\leq_1^{\Sigma D}\xi_\alpha(\gamma)[\vartheta(\gamma')]$. Let us write the representation of $\xi_\alpha(\gamma)$ as above, so that we have
\begin{equation*}
\xi_\alpha(\gamma)[\vartheta(\gamma')]\simeq(\sigma;\gamma_0,\dots,\gamma_{n-1},\vartheta(\gamma'))_{\Sigma D}.
\end{equation*}
We now apply Definition~\ref{def:leq_1^D} to the relation $\vartheta(\gamma')\leq_1^{\Sigma D}\xi_\alpha(\gamma')[\vartheta(\gamma')]$ and the sets $X=\{\gamma_0,\dots,\gamma_{n-1}\}$ and $Y=\{\vartheta(\gamma'),\xi_\alpha(\gamma)[\vartheta(\gamma')]\}$. This yields a $\widetilde Y=\{\delta,\eta\}\subseteq\vartheta(\gamma')$ with $\eta\simeq(\sigma;\gamma_0,\dots,\gamma_{n-1},\delta)_{\Sigma D}$ and $\delta\leq_1^{\Sigma D}\eta$. Once again we have $\delta\geq\xi_\alpha(\gamma)^*$ as well as $\eta=\xi_\alpha(\gamma)[\delta]$. Minimality yields $\vartheta(\gamma)\leq\delta<\vartheta(\gamma')$, as condition~(a) demands.
\end{proof}

\section{From admissible sets to $\Sigma_1$-elementarity}\label{sect:admissible-to-elementarity}

In this section we show that $\Omega\leq_1^D D(\Omega+1)$ holds when $\Omega$ is the ordinal height of an admissible set that contains the normal dilator~$D$. Together with the result from the previous section (and the result of~\cite{freund-equivalence}), this will allow us to establish Theorem~\ref{thm:pattern-admissibles} from the introduction (see the proof at the end of this section).

Throughout the following we fix a normal dilator~$D$. The crucial idea is to consider the classes
\begin{equation*}
C_D(\gamma):=\{\delta\in\ord\,|\,\delta\geq\gamma^*\text{ and }\delta\leq_1^D\gamma[\delta]\},
\end{equation*}
for $\gamma\geq D(0)$ (using the notation from Definition~\ref{def:fund-seq}). For~$\Omega$ as in the previous paragraph, we will show that $C_D(\gamma)\cap\Omega$ is closed and unbounded (club) in~$\Omega$, by induction from~$\gamma=\Omega=D(\Omega)$ up to arbitrary~$\gamma<D(\Omega+1)$. The following yields the base case, as the fixed points of the normal function $\alpha\mapsto D(\alpha)$ do form a club.

\begin{lemma}\label{lem:clubs-basis}
If $\gamma=D(\rho)$ holds for some~$\rho$, we have $C_D(\gamma)=\{\delta\in\ord\,|\,D(\delta)=\delta\}$.
\end{lemma}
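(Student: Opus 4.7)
The plan is to unfold both sides using the hypothesis $\gamma = D(\rho)$. By Lemma~\ref{lem:fund-basic}(f), $D(\rho)^{\ast} = 0$ and $D(\rho)[\delta] = D(\delta)$, so the definition of $C_D(\gamma)$ collapses to $\{\delta \in \ord : \delta \leq_1^D D(\delta)\}$; the side condition $\delta \geq \gamma^{\ast}$ becomes vacuous. What then remains is to prove that $\delta \leq_1^D D(\delta)$ is equivalent to the fixed-point equation $D(\delta) = \delta$.

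The direction $(\Leftarrow)$ is immediate: if $D(\delta) = \delta$, then $\alpha \leq_1^D \alpha$ holds for any $\alpha$, because in Definition~\ref{def:leq_1^D} the set $Y \subseteq \alpha \setminus \alpha$ is forced to be empty, and the identity on $X$ is then an $\mathcal{L}_D$-isomorphism of the required kind. For $(\Rightarrow)$, I would argue by contradiction: suppose $\delta \leq_1^D D(\delta)$ while $\delta < D(\delta)$, which is the only alternative since $\alpha \mapsto D(\alpha)$ is normal and therefore dominates the identity. By Proposition~\ref{prop:representations-exist-unique}, $\delta$ has a unique representation $\delta \simeq (\sigma; \delta_0, \dots, \delta_{k-1})_D$, and the equivalence in the same proposition, applied with $\delta < D(\delta)$, shows that (in case $k > 0$) every $\delta_i$ lies strictly below $\delta$. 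Taking $X = \{\delta_0, \dots, \delta_{k-1}\} \subseteq \delta$ and $Y = \{\delta\} \subseteq D(\delta) \setminus \delta$ in Definition~\ref{def:leq_1^D} produces some $\eta < \delta$ together with an $\mathcal{L}_D$-isomorphism $f \colon X \cup \{\delta\} \to X \cup \{\eta\}$ that fixes $X$ and satisfies $f(\delta) = \eta$. Preservation of the $(k+1)$-ary representation relation under $f$ then forces $\eta \simeq (\sigma; \delta_0, \dots, \delta_{k-1})_D$, and the uniqueness part of Proposition~\ref{prop:representations-exist-unique} yields $\eta = \delta$, contradicting $\eta < \delta$.

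There is no serious obstacle; the essence is to deploy $\delta$ itself as a witness in $Y$, together with the arguments of its own representation as parameters in $X$, and let uniqueness of representations do the work. The only nuance worth checking is that $\delta < D(\delta)$ forces every argument $\delta_i$ of the representation of $\delta$ to be strictly below $\delta$, which is precisely the right-to-left direction of the equivalence in Proposition~\ref{prop:representations-exist-unique}.
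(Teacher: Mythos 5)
Your proposal is correct and follows essentially the same route as the paper: invoke Lemma~\ref{lem:fund-basic}(f) to reduce membership in $C_D(\gamma)$ to $\delta\leq_1^D D(\delta)$, observe that the backward direction is reflexivity, and for the forward direction feed $X=\{\delta_0,\dots,\delta_{k-1}\}$ and $Y=\{\delta\}$ into Definition~\ref{def:leq_1^D} to produce an $\eta<\delta$ with the same representation as $\delta$, contradicting uniqueness from Proposition~\ref{prop:representations-exist-unique}. The only cosmetic difference is that you spell out that uniqueness forces $\eta=\delta$ (where the paper simply says it contradicts uniqueness), and you explicitly note that normality gives $\delta\leq D(\delta)$; both are correct and faithful to the intended argument.
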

\begin{proof}
By Lemma~\ref{lem:fund-basic} we have $\gamma^*=0$ and $\gamma[\delta]=D(\delta)$. It thus remains to show that $D(\delta)=\delta$ is equivalent to $\delta\leq_1^D D(\delta)$. For the direction from left to right, it suffices to observe that $\leq_1^D$ is reflexive. To establish the other direction, we derive a contradiction from the assumption that we have $\delta\leq_1^D D(\delta)$ and $\delta<D(\delta)$. In view of the latter, Proposition~\ref{prop:representations-exist-unique} provides a representation
\begin{equation*}
\delta\simeq(\sigma;\delta_0,\dots,\delta_{n-1})_D\quad\text{with }\delta_0<\ldots<\delta_{n-1}<\delta.
\end{equation*}
Let us now apply Definition~\ref{def:leq_1^D} to $\delta\leq_1^D D(\delta)$ and the sets $X=\{\delta_0,\dots,\delta_{n-1}\}\subseteq\delta$ and $Y=\{\delta\}\subseteq D(\delta)\backslash\delta$ (invoking~$\delta<D(\delta)$ again). This yields a $\widetilde Y=\{\delta'\}\subseteq\delta$ with $\delta'\simeq(\sigma;\delta_0,\dots,\delta_{n-1})_D$, contradicting the uniqueness part of Proposition~\ref{prop:representations-exist-unique}.
\end{proof}

The following result (essentially an abstract version of~\cite[Lemma~3.11]{wilken-bachmann-howard}) is needed for the successor case. Let us recall that~$\delta>0$ is a limit point of a class $C\subseteq\ord$ if any $\beta<\delta$ admits a $\gamma\in C\cap\delta$ with $\beta<\gamma$. The assumption~$\delta\in C_D(\gamma)$ in the following result is in fact automatic, by Corollary~\ref{cor:club-closed} below.

\begin{proposition}\label{prop:club-successor}
Consider an ordinal~$\gamma\geq D(0)$. If $\delta$ is an element and a limit point of~$C_D(\gamma)$, then we have $\delta\leq_1^D\gamma[\delta]+1$.
\end{proposition}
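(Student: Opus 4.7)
The plan is to verify Definition 2.8 directly. Given finite $X \subseteq \delta$ and $Y \subseteq (\gamma[\delta]+1) \setminus \delta$, let $\gamma[\delta] \simeq (\sigma; \gamma_0, \ldots, \gamma_{n-1}, \delta)_D$ be the representation from Proposition 2.6; enlarging $X$ if needed, I take $\{\gamma_0, \ldots, \gamma_{n-1}\} \subseteq X$. If $\gamma[\delta] \notin Y$ then $Y \subseteq \gamma[\delta] \setminus \delta$, and the defining property $\delta \leq_1^D \gamma[\delta]$ of $\delta \in C_D(\gamma)$ yields the required $\widetilde Y$ and isomorphism immediately. The substantive case is $\gamma[\delta] \in Y$; I write $Y = Y_0 \cup \{\gamma[\delta]\}$ and, after adjoining $\delta$ to $Y_0$ if necessary, assume $\delta \in Y_0$.

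In this case I use the limit-point hypothesis to pick $\delta' \in C_D(\gamma) \cap \delta$ with $\delta' > \max(X)$ and $\delta' \geq \gamma^*$, then encode the $\mathcal L_D$-isomorphism type of $X \cup Y_0$ over $X$ in a quantifier-free $\mathcal L_D$-formula $\Delta(\vec\alpha, u, \vec v)$ (with $u$ playing the role of $\delta$ and $\vec v$ enumerating $Y_0 \setminus \{\delta\}$), and consider the $\Sigma_1$-formula
\begin{equation*}
\varphi(\vec\alpha) \;\equiv\; \exists u\, \exists z\, \exists \vec v\,\bigl[\,z \simeq (\sigma;\gamma_0,\ldots,\gamma_{n-1},u)_D \,\wedge\, u \leq_1^D z \,\wedge\, \Delta(\vec\alpha, u, \vec v)\,\bigr].
\end{equation*}
Choosing $u := \delta'$ and $z := \gamma[\delta']$ (both lying in $\gamma[\delta]$ by Lemma 4.5(d) together with the fixed-point observation below) and producing $\vec v$ inside $\gamma[\delta]$ by an auxiliary application of $\delta \leq_1^D \gamma[\delta]$ with enlarged parameters $X \cup \{\gamma_0, \ldots, \gamma_{n-1}, \delta', \gamma[\delta']\}$ shows $\gamma[\delta] \vDash \varphi(\vec\alpha)$. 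Reflection through $\delta \leq_1^D \gamma[\delta]$ then yields witnesses $\delta^*, \eta, \vec y < \delta$, and $\widetilde Y := \{\delta^*, \eta\} \cup \vec y$ together with the map $f$ defined by $f\!\restriction\!X = \mathrm{id}$, $f(\delta) := \delta^*$, $f(\gamma[\delta]) := \eta$, and $f$ sending $Y_0 \setminus \{\delta\}$ to $\vec y$ as dictated by $\Delta$ completes the construction; the uniqueness clause of Proposition 2.6 then forces $\eta = \gamma[\delta^*]$.

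A subsidiary but essential observation is that $\delta = D(\delta)$: for every $\alpha < \delta$ the $\Sigma_1$-sentence $\exists z\,[\,z \simeq (\mu_1(0); \alpha+1)_D\,]$ is satisfied in $\gamma[\delta]$ (because $D(\alpha+1) \leq D(\delta) \leq \gamma[\delta]$ by Lemmas 4.5(a) and (f)), and therefore, via $\delta \leq_1^D \gamma[\delta]$, in $\delta$ as well, so that $\delta$ is closed under $D$. Hence $\eta = \gamma[\delta^*] < D(\delta^*+1) < D(\delta) = \delta$, confirming $\widetilde Y \subseteq \delta$ as required.

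The principal obstacle I anticipate sits in two coupled places: first, packaging $\Delta$ so that $f$ becomes a genuine $\mathcal L_D$-isomorphism and not merely a $\{\leq,\leq_1^D\}$-isomorphism; second, producing the witness $\vec v$ inside $\gamma[\delta]$ even though the elements of $Y_0 \setminus \{\delta\}$ have no a priori copies in $(\delta', \gamma[\delta])$. I plan to address the first by passing first to the closure $\cl(X \cup Y_0)$ of the proof of Proposition 2.7 and then invoking the $(\mathrm{i}) \Leftrightarrow (\mathrm{iii})$ equivalence of that proposition, so that only the $\Rightarrow$-direction of the representation clauses enters $\Delta$; the second is precisely where the bookkeeping parameters $\delta'$ and $\gamma[\delta']$ in the auxiliary application of $\delta \leq_1^D \gamma[\delta]$ do their work, pinning down a finite $\mathcal L_D$-diagram whose realisations in $\gamma[\delta]$ must lie in the desired interval.
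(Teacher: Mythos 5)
Your proof has the right coarse outline — use the limit-point hypothesis to pick $\delta'\in C_D(\gamma)\cap\delta$, collapse the elements of $Y$ along the map $\beta\mapsto\beta[\delta']$, and use $D(\delta)=\delta$ to keep the images below $\delta$ — but the route you take through $\Sigma_1$-reflection via $\delta\leq_1^D\gamma[\delta]$ both introduces a genuine gap and, once that gap is repaired, does not actually save any work over the direct argument.

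The concrete gap is in Step~4, showing $\gamma[\delta]\vDash\varphi(\vec\alpha)$. You propose witnesses $u:=\delta'$, $z:=\gamma[\delta']$ and want to manufacture $\vec v$ by an auxiliary application of $\delta\leq_1^D\gamma[\delta]$ with enlarged lower set $X''=X\cup\{\gamma_0,\dots,\gamma_{n-1},\delta',\gamma[\delta']\}$. But any $\mathcal L_D$-isomorphism $g$ produced that way fixes $X''$ pointwise, so it maps the elements of $Y_0$ (all $\geq\delta>\gamma[\delta']$) into the interval $(\max X'',\,\delta)\supseteq(\gamma[\delta'],\delta)$. If $\Delta$ (or the extra conjuncts of $\varphi$) correctly encode $u<\vec v<z$ — which they must, or else the reflected witnesses need not give a $\{\leq\}$-isomorphism — then these images lie on the \emph{wrong side} of $z=\gamma[\delta']$, so $\Delta(\vec\alpha,\delta',\vec v)$ fails. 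The only witnesses that can work are the ones you get by direct collapse, $\vec v=\{\beta[\delta']:\beta\in Y_0\setminus\{\delta\}\}\subseteq(\delta',\gamma[\delta'])$, and verifying $\Delta$ for those is exactly the $\mathcal L_D$-isomorphism computation of the paper; at that point $\vec v$ is already below $\gamma[\delta']<\delta$, and the subsequent reflection through $\delta\leq_1^D\gamma[\delta]$ contributes nothing.

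You also omit a step that the direct verification cannot do without. To prove that $\alpha\leq_1^D\beta\Leftrightarrow\alpha\leq_1^D\beta[\delta']$ for $\alpha\in X$ and $\beta\in Y$, one needs to convert $\alpha\leq_1^D\delta'$ into $\alpha\leq_1^D\delta$, and for this $\delta'$ must be chosen above a finite set of ``negative witnesses'': for every $\alpha\in X$ with $\alpha\not\leq_1^D\delta$ one fixes $\alpha'\in(\alpha,\delta)$ with $\alpha\not\leq_1^D\alpha'$ and requires $\delta'>\alpha'$. This set $X'$ of negative witnesses is what makes the $\leq_1^D$-clause go through, and it is absent from your construction (your constraints on $\delta'$ are only $\delta'>\max X$ and $\delta'\geq\gamma^*$). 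Your observation that $D(\delta)=\delta$, on the other hand, is correct (the paper obtains it more briefly from the uniqueness of representations, but your $\Sigma_1$-closure argument works), as is the use of Proposition~2.7(iii) to reduce the representation clauses to one direction.

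In short: drop the $\Sigma_1$-formula $\varphi$ and the auxiliary reflection; instead choose $\delta'=\eta\in C_D(\gamma)\cap\delta$ above $X\cup X'\cup\{\beta^*:\beta\in Y\}$, set $\widetilde Y:=\{\beta[\eta]:\beta\in Y\}$, take $f$ to be the identity on $X$ and $\beta\mapsto\beta[\eta]$ on $Y$, and verify the clauses of Proposition~2.7(iii) directly. That verification is the actual content of the proof, and your sketch does not yet carry it out.
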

If the assumption of Lemma~\ref{lem:fund-basic}(e) is satisfied, then we have $\gamma[\delta]+1=(\gamma+1)[\delta]$, so that the proposition yields $\delta\in C_D(\gamma+1)$.
\begin{proof}
In order to show $\delta\leq_1^D\gamma[\delta]+1$, we will apply the criterion from part~(c) of Theorem~\ref{prop:leq_1^D-Sigma_1-elem}. To this end, let us consider finite sets $X\subseteq\delta$ and $Y\subseteq(\gamma[\delta]+1)\backslash\delta$. If~$\delta$ is a limit of ordinals~$\beta$ with $\alpha\leq_1^D\beta$, then we also have $\alpha\leq_1^D\delta$, as a glance at Definition~\ref{def:leq_1^D} reveals. For each $\alpha\in X$ with $\alpha\not\leq_1^D\delta$, we may thus pick an~$\alpha'>\alpha$ with $\alpha\not\leq_1^D\alpha'<\delta$. Let us set
\begin{equation*}
X':=\{\alpha'\,|\,\alpha\in X\text{ and }\alpha\not\leq_1^D\delta\}\subseteq\delta.
\end{equation*}
From $\delta\in C_D(\gamma)$ we can derive $\delta\leq_1^D D(\delta)\leq\gamma[\delta]$ and then $D(\delta)=\delta$, as in the proof of Lemma~\ref{lem:clubs-basis}. In particular we have $\beta>D(0)$ for any ordinal~$\beta\in Y$. Due to $D(\delta)=\delta\leq\beta\leq\gamma[\delta]<D(\delta+1)$ we get $\beta^*\leq\delta$, by Lemma~\ref{lem:fund-basic}. In fact, we even obtain $\beta^*<\delta$, as $\beta^*$ is never a limit (cf.~Definition~\ref{def:fund-seq}). Using the assumption of the proposition, we now choose an ordinal~$\eta$ with
\begin{equation*}
X\cup X'\cup\{\beta^*\,|\,\beta\in Y\}\subseteq\eta\in C_D(\gamma)\cap\delta.
\end{equation*}
Put $\widetilde Y:=\{\beta[\eta]\,|\,\beta\in Y\}$, and define $f:X\cup Y\to X\cup\widetilde Y$ by
\begin{equation*}
f(\beta):=\begin{cases}
\beta & \text{if $\beta\in X$},\\
\beta[\eta] & \text{if $\beta\in Y$}.
\end{cases}
\end{equation*}
It remains to verify the conditions from part~(c) of Theorem~\ref{prop:leq_1^D-Sigma_1-elem}. First observe that the elements of~$\widetilde Y$ satisfy $\beta[\eta]<D(\eta+1)\leq D(\delta)=\delta$. We now show
\begin{equation*}
\alpha\leq\beta\quad\Leftrightarrow\quad f(\alpha)\leq f(\beta)
\end{equation*}
for $\alpha,\beta\in X\cup Y$. If we have $\alpha\in X$ and $\beta\in Y$, the left side holds and we have
\begin{equation*}
f(\alpha)=\alpha<\eta\leq D(\eta)\leq\beta[\eta]=f(\beta).
\end{equation*}
The case of~$\alpha\in Y$ and $\beta\in X$ is covered by essentially the same argument. It remains to consider $\alpha,\beta\in Y$. Here we have $D(\delta)=\delta\leq\alpha,\beta\leq\gamma[\delta]<D(\delta+1)$. From Lemma~\ref{lem:fund-basic} we learn that $\alpha<\beta$ is equivalent to~$\alpha[\eta]<\beta[\eta]$. The same equivalence holds when both occurrences of~$<$ are replace by~$\leq$, as we are concerned with a linear order. Next, we establish
\begin{equation*}
\alpha\leq_1^D\beta\quad\Leftrightarrow\quad f(\alpha)\leq_1^D f(\beta).
\end{equation*}
We may focus on the case of~$\alpha<\beta$, as we have seen that~$f$ is an order isomorphism. Let us first assume $\alpha\in X$ and $\beta\in Y$. Then $\alpha\leq_1^D\beta$ implies $f(\alpha)=\alpha\leq_1^D f(\beta)$, as we have $\alpha<f(\beta)<\beta$. In the converse direction, $f(\alpha)\leq_1^D f(\beta)=\beta[\eta]$ and $\alpha=f(\alpha)<\eta\leq\beta[\eta]$ yield $\alpha\leq_1^D\eta$. The latter entails $\alpha\leq_1^D\delta$, since $\alpha\not\leq_1^D\delta$ would lead to~$\alpha\not\leq_1^D\alpha'<\eta$. From the assumption $\delta\in C_D(\gamma)$ we also get $\delta\leq_1^D\beta\leq\gamma[\delta]$. It is straightforward to see that~$\leq^D_1$ is transitive. The previous inequalities can thus be combined into $\alpha\leq_1^D\beta$, which was required. For~$\alpha,\beta\in Y$ we show
\begin{equation*}
\alpha\leq_1^D\beta\quad\Leftrightarrow\quad\alpha=\delta\quad\Leftrightarrow\quad\alpha[\eta]=\eta\quad\Leftrightarrow\quad\alpha[\eta]\leq_1^D\beta[\eta].
\end{equation*}
Concerning the middle equivalence, we observe that $D(\delta)=\delta$ and Lemma~\ref{lem:fund-basic}(f) yield $\delta^*=0$ and $\delta[\eta]=D(\eta)=\eta$ (where the last equation follows from~$\eta\in C_D(\gamma)$). This also shows that $\delta<\alpha$ implies $\eta<\alpha[\eta]$, as needed for the converse implication. The first equivalence is similar to the third, so we only provide details for the latter: From $D(\delta)=\delta\leq\alpha<\beta\leq\gamma[\delta]<D(\rho+1)$ we get $\alpha[\eta]<\beta[\eta]\leq\gamma[\delta][\eta]=\gamma[\eta]$, once again by Lemma~\ref{lem:fund-basic}. If we have $\alpha[\eta]=\eta$, we can thus invoke~$\eta\in C_D(\gamma)$ to get $\eta\leq_1^D\gamma[\eta]$ and then $\alpha[\eta]\leq_1^D\beta[\eta]$. To establish the converse implication, we derive a contradiction from the assumption that we have $\eta<\alpha[\eta]\leq_1^D\beta[\eta]$. Write $\alpha[\eta]\simeq(\sigma;\alpha_0,\dots,\alpha_n)_D$ 
with $\alpha_0<\ldots<\alpha_n=\eta$. Given $\eta<\alpha[\eta]<\beta[\eta]$, we can apply Definition~\ref{def:leq_1^D} to $\alpha[\eta]\leq_1^D\beta[\eta]$ and the sets $\{\alpha_0,\dots,\alpha_n\}\subseteq\alpha[\eta]$ and~\mbox{$\{\alpha[\eta]\}\subseteq\beta[\eta]\backslash\alpha[\eta]$}. This yields an $\widetilde\alpha<\alpha[\eta]$ with $\widetilde\alpha\simeq(\sigma;\alpha_0,\dots,\alpha_n)_D$, which contradicts the uniqueness part of Proposition~\ref{prop:representations-exist-unique}. Finally, we establish
\begin{equation*}
\alpha\simeq(\sigma;\alpha_0,\dots,\alpha_{n-1})_D\quad\Rightarrow\quad f(\alpha)\simeq(\sigma;f(\alpha_0),\dots,f(\alpha_{n-1}))_D
\end{equation*}
for arbitrary $\alpha,\alpha_0,\dots,\alpha_{n-1}\in X\cup Y$. Note that the criterion from Theorem~\ref{prop:leq_1^D-Sigma_1-elem}(c) does not require us to verify the converse implication (as the latter is automatic when $X\cup Y$ has suitable closure properties). In case $\{\alpha_0,\dots,\alpha_{n-1}\}\subseteq\delta$ we have $\alpha<D(\delta)=\delta$ (by Proposition~\ref{prop:representations-exist-unique}), so that $f:X\cup Y\to X\cup\widetilde Y$ does not move any of the relevant parameters. Also, $\alpha_{n-1}>\delta$ would entail $D(\delta+1)\leq\alpha\notin X\cup Y$. The only interesting case is thus $\alpha_{n-1}=\delta$ (if~$\delta\in Y$). Here we have $\alpha\in Y$ and
\begin{equation*}
f(\alpha)=\alpha[\eta]\simeq(\sigma;\alpha_0,\dots,\alpha_{n-2},\eta)_D.
\end{equation*}
For $i<n-1$ we have $\alpha_i<\alpha_{n-1}=\delta$, thus $\alpha_i\in X$ and $f(\alpha_i)=\alpha_i$. It remains to show $f(\delta)=\eta$. Due to $\delta,\eta\in C_D(\gamma)$ we have $D(\delta)=\delta$ and $D(\eta)=\eta$. By part~(f) of Lemma~\ref{lem:fund-basic} we now get $f(\delta)=\delta[\eta]=D(\delta)[\eta]=D(\eta)=\eta$, as needed.
\end{proof}

As promised, we can derive the following. Let us recall that a class is called closed if it contains all its limit points.

\begin{corollary}\label{cor:club-closed}
The class $C_D(\gamma)$ is closed for each~$\gamma\geq D(0)$.
\end{corollary}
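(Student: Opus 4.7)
The plan is to show that if $\delta$ is a limit point of $C_D(\gamma)$, then $\delta \in C_D(\gamma)$, and the proof will mirror that of Proposition~\ref{prop:club-successor} with two modifications: establishing $D(\delta)=\delta$ without presupposing $\delta\in C_D(\gamma)$, and targeting $\gamma[\delta]$ rather than $\gamma[\delta]+1$.

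First I would handle the hypothesis $\delta\geq\gamma^*$ and the identity $D(\delta)=\delta$. Any $\eta\in C_D(\gamma)$ satisfies $\eta\leq_1^D\gamma[\eta]$, and Lemma~\ref{lem:fund-basic}(a) gives $\eta\leq D(\eta)\leq\gamma[\eta]<D(\eta+1)$, so Definition~\ref{def:leq_1^D} forces $\eta\leq_1^D D(\eta)$; repeating the argument from Lemma~\ref{lem:clubs-basis} then yields $D(\eta)=\eta$. Thus $C_D(\gamma)$ is contained in the fixed point class of the normal function $\alpha\mapsto D(\alpha)$, which is closed, so $D(\delta)=\delta$. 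Moreover any witness $\eta\in C_D(\gamma)\cap\delta$ satisfies $\gamma^*\leq\eta<\delta$, giving $\gamma^*\leq\delta$.

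Next I would verify $\delta\leq_1^D\gamma[\delta]$ using the criterion of Proposition~\ref{prop:leq_1^D-Sigma_1-elem}(c). Given finite $X\subseteq\delta$ and $Y\subseteq\gamma[\delta]\backslash\delta$, for each $\alpha\in X$ with $\alpha\not\leq_1^D\delta$ I would pick $\alpha'<\delta$ witnessing this, collect them into a set $X'\subseteq\delta$, observe that each $\beta\in Y$ satisfies $D(\delta)\leq\beta<\gamma[\delta]<D(\delta+1)$ and thus $\beta^*<\delta$ (since $\beta^*$ is never a limit), and then use the fact that $\delta$ is a limit point of $C_D(\gamma)$ to pick
\begin{equation*}
\eta\in C_D(\gamma)\cap\delta\quad\text{with}\quad X\cup X'\cup\{\beta^*\mid\beta\in Y\}\subseteq\eta.
\end{equation*}
Setting $\widetilde Y:=\{\beta[\eta]\mid\beta\in Y\}\subseteq D(\eta+1)\leq\delta$ and $f(\alpha):=\alpha$ on $X$, $f(\beta):=\beta[\eta]$ on $Y$, I would check the three preservation properties (order, $\leq_1^D$, representation) exactly as in the proof of Proposition~\ref{prop:club-successor}; the cases $\alpha\in X,\beta\in Y$ and $\alpha,\beta\in Y$ transfer verbatim, using $D(\delta)=D(\eta)=\delta=\eta$ when needed, together with Lemma~\ref{lem:fund-basic}(d,f) for the order and representation clauses.

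The main obstacle, as in the proof of Proposition~\ref{prop:club-successor}, is the $\leq_1^D$-preservation for pairs $\alpha,\beta\in Y$: one needs the equivalence chain $\alpha\leq_1^D\beta\Leftrightarrow\alpha=\delta\Leftrightarrow\alpha[\eta]=\eta\Leftrightarrow\alpha[\eta]\leq_1^D\beta[\eta]$, the non-trivial direction being the implication from $\eta<\alpha[\eta]\leq_1^D\beta[\eta]$ to a contradiction via the uniqueness clause of Proposition~\ref{prop:representations-exist-unique}. This step is the only one that genuinely uses both the limit-point property and the normality of $D$, and it is essentially imported from Proposition~\ref{prop:club-successor}. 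The simplification in the corollary is that $\gamma[\delta]$ itself cannot appear in $Y$, so the case analysis is strictly easier; otherwise everything is bookkeeping.
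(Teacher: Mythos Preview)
Your approach has a genuine circularity in the $\leq_1^D$-preservation step. To verify that your map $f$ is an $\{\leq,\leq_1^D\}$-isomorphism, you must establish the equivalence $\alpha\leq_1^D\beta\Leftrightarrow f(\alpha)\leq_1^D f(\beta)$ for all $\alpha,\beta\in X\cup Y$. Take the case $\alpha\in X$ and $\beta\in Y$ (or equally $\alpha=\delta\in Y$ and $\beta\in Y$ with $\delta<\beta$). In the proof of Proposition~\ref{prop:club-successor}, the backward implication $f(\alpha)\leq_1^D f(\beta)\Rightarrow\alpha\leq_1^D\beta$ is obtained by first deducing $\alpha\leq_1^D\delta$ (via the $X'$ trick) and then invoking $\delta\leq_1^D\beta$, which is drawn from the standing hypothesis $\delta\in C_D(\gamma)$ (this gives $\delta\leq_1^D\gamma[\delta]$ and hence $\delta\leq_1^D\beta$ for every $\beta\leq\gamma[\delta]$). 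In your setting that hypothesis is exactly the conclusion you are after, so the step cannot be imported ``verbatim'': you would need $\delta\leq_1^D\beta$ for each $\beta\in Y$ before you can conclude $\delta\leq_1^D\gamma[\delta]$.

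The paper resolves this by a different mechanism: it proves $\delta\leq_1^D\eta$ by induction on $\eta$ from $\eta=\delta$ up to $\eta=\gamma[\delta]$. The successor step applies Proposition~\ref{prop:club-successor} not to $C_D(\gamma)$ but to $C_D(\eta)$; the required membership $\delta\in C_D(\eta)$ is supplied by the induction hypothesis (since $\eta[\delta]=\eta$), and the limit-point condition for $C_D(\eta)$ is derived from that for $C_D(\gamma)$ via Lemma~\ref{lem:fund-basic}. Thus the corollary is not a mere repetition of the proposition's argument with one element removed, but a genuine bootstrap that uses the proposition at every successor stage.
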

\begin{proof}
First observe that $\rho\in C_D(\gamma)$ entails $\rho\leq_1^D D(\rho)\leq\gamma[\rho]$ and then $D(\rho)=\rho$, as in the proof of Lemma~\ref{lem:clubs-basis}. Since $\alpha\mapsto D(\alpha)$ is a normal function, we obtain $D(\delta)=\delta$ for any given limit point $\delta$ of~$C_D(\gamma)$. To conclude, we establish $\delta\leq_1^D\eta$ by induction from~$\eta=\delta$ up to $\eta=\gamma[\delta]$. Base case and limit step are immediate by Definition~\ref{def:leq_1^D}. Let us now consider the step from~$\eta$ to $\eta+1\leq\gamma[\delta]<D(\delta+1)$. In view of $D(\delta)=\delta\leq\eta<D(\delta+1)$ we get $\delta\geq\eta^*$ and $\eta[\delta]=\eta$, by Lemma~\ref{lem:fund-basic}. For the induction step we must thus establish $\delta\leq^D_1\eta[\delta]+1$. Due to Proposition~\ref{prop:club-successor}, it suffices to show that $\delta$ is an element and a limit point of~$C_D(\eta)$. In view of~$\eta[\delta]=\eta$ we get $\delta\in C_D(\eta)$ from the induction hypothesis . Now consider an arbitrary~$\alpha<\delta$. We must find a $\beta\in C_D(\eta)$ with~$\alpha<\beta<\delta$. As~$\eta^*$ cannot be a limit (cf.~Definition~\ref{def:fund-seq}), we must have $\eta^*<\delta$. This allows us to pick a $\beta\supseteq\{\alpha,\eta^*\}$ in $C_D(\gamma)\cap\delta$, since~$\delta$ was assumed to be a limit point of this set. Using Lemma~\ref{lem:fund-basic}, we see that $D(\delta)\leq\eta<\gamma[\delta]<D(\delta+1)$ entails $\eta[\beta]<\gamma[\delta][\beta]=\gamma[\beta]$. Since $\beta\in C_D(\gamma)$ provides $\beta\leq_1^D\gamma[\beta]$, we can now conclude $\beta\leq_1^D\eta[\beta]$, as needed for~$\beta\in C_D(\eta)$.
\end{proof}

To formulate the limit step, we fix an ordinal~$\Omega\geq D(0)$. We will later assume that $\Omega$ is the height of an admissible set, but this is not required yet.

\begin{proposition}\label{prop:club-limit}
Let us consider a limit ordinal~$\gamma$ with $D(\Omega)<\gamma<D(\Omega+1)$. For each~$\eta<\Omega$, we put
\begin{equation*}
F_D(\gamma,\eta):=\bigcap\{C_D(\beta)\,|\,\Omega\leq\beta<\gamma\text{ and }\beta^*\leq\eta\}.
\end{equation*}
We then have $\delta\in C_D(\gamma)$ for any limit ordinal~$\delta$ that satisfies $\gamma^*\leq\delta=D(\delta)<\Omega$ as well as $\delta\in F_D(\gamma,\eta)$ for all~$\eta<\delta$.
\end{proposition}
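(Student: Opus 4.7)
The plan is to apply Definition~\ref{def:leq_1^D} after reducing the required $\delta\leq_1^D\gamma[\delta]$ to instances $\delta\leq_1^D\beta$ for cofinally many $\beta<\gamma[\delta]$. For this reduction to go through I first need $\gamma[\delta]$ to be a limit ordinal, which follows at once from Lemma~\ref{lem:fund-basic}(g) applied with $E=D$ and $\rho=\Omega$: the inequalities $D(\Omega)<\gamma<D(\Omega+1)$, $\Omega\geq\delta\geq\gamma^*$, and the fact that both $\gamma$ and $\delta$ are limits are all given.

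The core of the argument will establish $\delta\leq_1^D\beta$ for each $\beta$ with $\delta\leq\beta<\gamma[\delta]$. My plan is to replace the top parameter $\delta$ in the representation of $\beta$ by $\Omega$, forming $\beta':=\beta[\Omega]$, and then read off the elementarity from the hypothesis $\delta\in F_D(\gamma,\eta)$. Since $\delta=D(\delta)\leq\beta<\gamma[\delta]<D(\delta+1)$, Lemma~\ref{lem:fund-basic}(b) yields $\beta^*\leq\delta<\Omega$ and $\beta[\delta]=\beta$, so $\beta'$ is well-defined, and Lemma~\ref{lem:fund-basic}(c) then produces $\beta'[\delta]=\beta$ and $\beta'^*=\beta^*$. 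Lemma~\ref{lem:fund-basic}(a) gives $\Omega\leq D(\Omega)\leq\beta'$, and Lemma~\ref{lem:fund-basic}(d) (with top index $\rho=\delta$ and substitution ordinal $\Omega$), combined with $\gamma[\delta][\Omega]=\gamma[\Omega]=\gamma$ via parts (b) and (c) of Lemma~\ref{lem:fund-basic}, transfers $\beta<\gamma[\delta]$ into $\beta'<\gamma$. Because $\beta'^*=\beta^*$ is either zero or a successor while $\delta$ is a limit, the choice $\eta:=\beta^*<\delta$ places $\beta'$ among the indices contributing to $F_D(\gamma,\eta)$, so $\delta\in F_D(\gamma,\eta)\subseteq C_D(\beta')$, i.e., $\delta\leq_1^D\beta'[\delta]=\beta$.

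To complete the argument, given finite $X\subseteq\delta$ and $Y\subseteq\gamma[\delta]\backslash\delta$ I will pick any $\beta$ with $\max(Y)<\beta<\gamma[\delta]$, which exists because $\gamma[\delta]$ is a limit ordinal. Then $Y\subseteq\beta\backslash\delta$, and applying Definition~\ref{def:leq_1^D} to $\delta\leq_1^D\beta$ delivers the $\widetilde Y\subseteq\delta$ and $\mathcal L_D$-isomorphism required to conclude $\delta\leq_1^D\gamma[\delta]$, hence $\delta\in C_D(\gamma)$.

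The main obstacle is bookkeeping: every invocation of Lemma~\ref{lem:fund-basic} must be checked for its $\ast$-bound hypotheses, and one has to place $\beta'=\beta[\Omega]$ simultaneously in $[\Omega,\gamma)$ (so that it is eligible for the intersection $F_D(\gamma,\eta)$) and below the threshold $\eta$ demanded there. Once the substitution $\beta\mapsto\beta[\Omega]$ is seen to respect all the relevant inequalities, the passage from $\delta\in\bigcap_{\eta<\delta}F_D(\gamma,\eta)$ to the desired $\Sigma_1$-elementarity at the limit $\gamma[\delta]$ is essentially mechanical.
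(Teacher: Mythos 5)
Your proof is correct and follows essentially the same route as the paper's: lift each $\beta$ in $[\delta,\gamma[\delta])$ to $\beta[\Omega]\in[\Omega,\gamma)$, use the hypothesis $\delta\in F_D(\gamma,\beta^*)$ to get $\delta\leq_1^D\beta$, and then exploit the fact (from Lemma~\ref{lem:fund-basic}(g)) that $\gamma[\delta]$ is a limit to absorb any finite $Y\subseteq\gamma[\delta]\backslash\delta$ into some such $\beta$. You spell out the $*$-bound side conditions and the reduction to a cofinal sequence a bit more explicitly than the paper, but the argument is the same.
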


Before we prove the proposition, we sketch how it fits into our inductive argument (see below for details): By induction hypothesis, each of the sets $C_D(\beta)\cap\Omega$ will be club in~$\Omega$. The assumption $\beta^*\leq\eta$ ensures that $F_D(\gamma,\eta)\cap\Omega$ is the intersection of less than~$\Omega$ clubs, and hence club itself. From the proposition we learn that $C_D(\gamma)\cap\Omega$ contains (essentially) the diagonal intersection over the clubs $F_D(\gamma,\eta)\cap\Omega$. Hence $C_D(\gamma)\cap\Omega$ is unbounded in~$\Omega$, and club by Corollary~\ref{cor:club-closed}.

\begin{proof}
From Lemma~\ref{lem:fund-basic} we know that $\gamma[\delta]$ is a limit. Hence the desired conclusion $\delta\leq_1^D\gamma[\delta]$ reduces to $\delta\leq_1^D\alpha$ for $D(\delta)\leq\alpha<\gamma[\delta]$. For any such~$\alpha$ we have $\delta\geq\alpha^*$, which allows us to consider $\beta:=\alpha[\Omega]\geq\Omega$. Using Lemma~\ref{lem:fund-basic}, we compute
\begin{equation*}
\beta[\delta]=\alpha[\Omega][\delta]=\alpha[\delta]=\alpha<\gamma[\delta]
\end{equation*}
and infer $\beta<\gamma$. We also have $\beta^*=\alpha^*<\delta$ (since $\delta$ is a limit), so that we get
\begin{equation*}
\delta\in F_D(\gamma,\beta^*)\subseteq C_D(\beta).
\end{equation*}
In view of $\beta[\delta]=\alpha[\Omega][\delta]=\alpha[\delta]=\alpha$ this yields $\delta\leq_1^D\alpha$, as required.
\end{proof}

As mentioned above, we want to use induction from~$\gamma=\Omega=D(\Omega)$ up to arbitrary~$\gamma<D(\Omega+1)$ to show that $C_D(\gamma)\cap\Omega$ is club in~$\Omega$. If~$\Omega$ was a regular cardinal, this would follow from the previous propositions and standard facts (the limit points of each club form another club, and clubs are closed under diagonal intersections). In the following we recover these facts under the assumption that $\Omega=\mathbb A\cap\ord$ is the height of an admissible set~$\mathbb A\ni D$ (where $D:\nat\to\ord$ is the set-sized object from Definition~\ref{def:pre-dilator}, not its class-sized extension~$D:\ord\to\ord$). For this purpose, we would like to have a $\Delta$-definition of $C_D(\gamma)\cap\Omega$ in~$\mathbb A$, which should be uniform in~$\gamma$. If we take this desideratum literal, then it is impossible to satisfy, simply because we are interested in ordinals~$\gamma\notin\mathbb A$. However, we can get very close: The ordinals~$\gamma\in D(\Omega+1)\backslash D(\Omega)$ are those with representations
\begin{equation*}
\gamma\simeq(\sigma;\gamma_0,\dots,\gamma_{n-1},\Omega)_D
\end{equation*}
that have last entry~$\Omega$. Given that the latter is fixed, we can omit it and write
\begin{equation*}
\gamma\sim\langle\sigma;\gamma_0,\dots,\gamma_{n-1}\rangle_D.
\end{equation*}
This yields a bijection between $D(\Omega+1)\backslash D(\Omega)$ and the collection of expressions that appear on the right, which we denote by
\begin{equation*}
\mathbb D:=\{\langle\sigma;\gamma_0,\dots,\gamma_{n-1}\rangle_D\,|\,(\sigma,n+1)\in\tr(D)\text{ and }\gamma_0<\ldots<\gamma_{n-1}<\Omega\}.
\end{equation*}
We order~$\mathbb D$ so that our bijection $\mathbb D\cong D(\Omega+1)\backslash D(\Omega)$ becomes an isomorphism. Membership in and the order relation on~$\mathbb D$ can be decided by primitive recursive set functions with parameter~$D$ (cf.~Definition~\ref{def:dil-extend} and the discussion in~\cite[Section~2]{freund-computable}). In particular, we obtain a $\Delta$-definition of the order~$\mathbb D\subseteq\mathbb A$ in the admissible set~$\mathbb A$. This allows for an alternative approach to our club sets:

\begin{definition}\label{def:club-admissible}
For each element $\rho=\langle\sigma;\gamma_0,\dots,\gamma_{n-1}\rangle_D\in\mathbb D$, we define
\begin{equation*}
\overline C_D(\rho):=C_D(\gamma)\cap\Omega\quad\text{for }\gamma\sim\langle\sigma;\gamma_0,\dots,\gamma_{n-1}\rangle_D.
\end{equation*}
\end{definition}

As promised, we get the following:

\begin{proposition}\label{prop:clubs-Delta}
If~$D$ is a normal dilator and $\mathbb A\ni D$ an admissible set, then
\begin{equation*}
\{(\delta,\rho)\in\Omega\times\mathbb D\,|\,\delta\in\overline C_D(\rho)\}\subseteq\mathbb A^2
\end{equation*}
is $\Delta$-definable in~$\mathbb A$.
\end{proposition}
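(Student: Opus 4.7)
The overall strategy is to carry out, inside $\mathbb A$, the primitive-recursive definition of $\leq_1^D\!\restriction\!(\beta\times\beta)$ sketched after Definition~\ref{def:leq_1^D}, and then to rephrase the condition $\delta\in\overline C_D(\rho)$ in terms of it. The first ingredient is the $\Sigma$-totality on $\mathbb A\cap\ord$ of the maps $\delta\mapsto D(\delta)$ and $\delta\mapsto\eta_\delta$. Indeed $\overline D(\delta)$ is a primitive recursive set function of $\delta$ with $D$ as parameter and hence lies in $\mathbb A$; since $D$ is a dilator, $\overline D(\delta)$ is a well order, and its Mostowski collapse---obtained by $\Sigma$-recursion along this well order in $\mathbb A$---furnishes the ordinal $D(\delta)$ together with the isomorphism $\eta_\delta$. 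From $\Sigma$-replacement one obtains $D(\delta)<\Omega$ for every $\delta<\Omega$. With $\eta_\delta$ as a parameter, the representation relation $\gamma\simeq(\sigma;\gamma_0,\dots,\gamma_{n-1})_D$ is $\Delta$-definable in $\mathbb A$ for $\gamma,\gamma_i<D(\delta)$, because it amounts to the equation $\eta_\delta(\gamma)=(\sigma;\gamma_0,\dots,\gamma_{n-1};\delta)_D$ in $\overline D(\delta)$, whose truth value is independent of the choice of $\delta$ by Lemma~\ref{lem:normal-representation-independent}.

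The second ingredient is a $\Sigma$-recursive construction of $\beta\mapsto{\leq_1^D}\!\restriction\!(\beta\times\beta)$ on $\mathbb A\cap\ord$. At limit stages one takes the union of the previous stages; at the successor step from $\beta$ to $\beta+1$, the condition $\alpha\leq_1^D\beta$ from Definition~\ref{def:leq_1^D} is expressible, for each $\alpha\leq\beta$, by a $\Delta_0$-formula whose parameters are the previously computed restriction ${\leq_1^D}\!\restriction\!(\beta\times\beta)$, the representation relation on ordinals $\leq\beta$, and the set of finite subsets of $\beta+1$. Admissibility then yields a $\Sigma$-definable total function on $\mathbb A\cap\ord$ whose value at $\beta$ is ${\leq_1^D}\!\restriction\!(\beta\times\beta)\in\mathbb A$; hence the binary relation $\alpha\leq_1^D\beta$ on $\mathbb A\cap\ord$ is $\Delta$-definable in $\mathbb A$ (the complementary formula is produced by the same recursion, so both orientations are $\Sigma$).

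With these tools the proposition is almost immediate. Given $\rho=\langle\sigma;\gamma_0,\dots,\gamma_{n-1}\rangle_D\in\mathbb D$ and $\delta<\Omega$, the ordinal $\gamma^*=\sup\{\gamma_i+1\,|\,i<n\}$ is a primitive-recursive function of $\rho$, while $\gamma[\delta]$ is the unique ordinal below $D(\delta+1)<\Omega$ satisfying $\eta_{\delta+1}(\gamma[\delta])=(\sigma;\gamma_0,\dots,\gamma_{n-1},\delta;\delta+1)_D$, and hence a $\Delta$-definable function of $\delta$ and $\rho$ by the first step. The condition $\delta\in\overline C_D(\rho)$ unfolds to the conjunction ``$\delta\geq\gamma^*$ and $\delta\leq_1^D\gamma[\delta]$'', which is $\Delta$ by combining the two ingredients.

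The main obstacle is that each step of the recursion for $\leq_1^D$ consumes an entire previously-defined restriction ${\leq_1^D}\!\restriction\!(\beta\times\beta)$ as a parameter, so one genuinely needs the set-valued $\Sigma$-recursion provided by KP rather than a mere iteration of a primitive-recursive step; one also has to verify that the existential quantifiers over $\widetilde Y$ and the witnessing isomorphism $f$ in Definition~\ref{def:leq_1^D} range over sets already available in $\mathbb A$, so that the successor clause remains $\Delta_0$ in its set parameters. Once this bookkeeping is in place, the remainder of the proof is routine.
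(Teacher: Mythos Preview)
Your proposal is correct and follows essentially the same approach as the paper: reduce $\delta\in\overline C_D(\rho)$ to the conjunction $\delta\geq\gamma^*$ and $\delta\leq_1^D\gamma[\delta]$, then argue that the maps $\rho\mapsto\gamma^*$, $(\delta,\rho)\mapsto\gamma[\delta]$ and $\beta\mapsto{\leq_1^D}\!\restriction\!(\beta\times\beta)$ are $\Sigma$-definable and total in~$\mathbb A$, using that the collapse of a well order to an ordinal (hence $\delta\mapsto\eta_\delta$) is $\Sigma$-total in admissible sets. The paper's proof is terser about the recursion for $\leq_1^D$ (it simply points back to the discussion after Definition~\ref{def:leq_1^D}), whereas you spell out the successor step and the bookkeeping about set parameters, but the argument is the same.
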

\begin{proof}
If $\rho\in\mathbb D$ and $\gamma\in D(\Omega+1)$ are related as in Definition~\ref{def:club-admissible}, we set $\rho^+:=\gamma^*$ and $\rho\langle\delta\rangle:=\gamma[\delta]$ for $\rho^+\leq\delta<\Omega$. Then $\delta\in\overline C_D(\rho)$ is equivalent to the conjunction of $\delta\geq\rho^+$ and $\delta\leq_1^D\rho\langle\delta\rangle$. In order to obtain the claim from the proposition, it suffices to show that the functions $\rho\mapsto\rho^+$, $(\delta,\rho)\mapsto\rho\langle\delta\rangle$ and $\beta\mapsto{\leq_1^D\!\restriction\!(\beta\times\beta)}$ (with the obvious domains of definition) are $\Sigma$-definable and total in~$\mathbb A$. For the first function this is obvious, as $\rho^+=\sup\{\gamma_i+1\,|\,i<n\}$ can be read off from the expression~$\rho=\langle\sigma;\gamma_0,\dots,\gamma_{n-1}\rangle_D\in\mathbb D$. Concerning the second function, we observe that $\rho\langle\delta\rangle$ is characterized by
\begin{equation*}
\eta_{\delta+1}(\rho\langle\delta\rangle)=(\sigma;\gamma_0,\dots,\gamma_{n-1},\delta;\delta+1)_D,
\end{equation*}
for the unique isomorphism $\eta_{\delta+1}:D(\delta+1)\to\overline D(\delta+1)$ (cf.~Definitions~\ref{def:dil-ordinals} and~\ref{def:representation}). Crucially, the function that maps a well-ordered set~$X$ (in the sense of the universe) to its collapse $c:X\to\alpha$ onto an ordinal is $\Sigma$-definable and total in admissible sets (by~\cite[Theorem~4.6]{jaeger-admissibles}, cf.~also~\cite[Exercise~V.6.12]{barwise-admissible}). Together with the fact that~$\alpha\mapsto\overline D(\alpha)$ is primitive recursive, it follows that $\delta\mapsto\eta_\delta$ and hence $(\delta,\rho)\mapsto\rho\langle\delta\rangle$ is $\Sigma$-definable and total in~$\mathbb A$. The same is true for the function $\beta\mapsto{\leq_1^D\!\restriction\!(\beta\times\beta)}$, since its values are primitive recursive in~$\eta_\beta:D(\beta)\to\overline D(\beta)$, by the discussion that follows Definition~\ref{def:leq_1^D} above.
\end{proof}

The following theorem and its corollary are the main results of this section.

\begin{theorem}
Consider a normal dilator~$D$ and an admissible set~$\mathbb A\ni D$ with height $\Omega=\ord\cap\mathbb A$. For each ordinal $\gamma\in D(\Omega+1)\backslash\Omega$, the set $C_D(\gamma)\cap\Omega$ is closed and unbounded in~$\Omega$.
\end{theorem}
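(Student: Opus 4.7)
The plan is transfinite induction on $\gamma$ ranging over $[\Omega, D(\Omega{+}1))$, showing at each stage that $C_D(\gamma) \cap \Omega$ is closed and unbounded in $\Omega$. A preliminary but crucial observation is that $D(\alpha) < \Omega$ whenever $\alpha < \Omega$: the order $\overline D(\alpha)$ is primitive recursive in $D$ and $\alpha$, hence lies in $\mathbb A$, and the Mostowski collapse to its ordinal image is $\Sigma$-definable and total in admissibles (as used in the proof of Proposition~\ref{prop:clubs-Delta}). So $\delta \mapsto D(\delta)$ is a normal function mapping $\Omega$ into itself, and in particular $D(\Omega) = \Omega$.

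For the base case $\gamma = \Omega$, Lemma~\ref{lem:clubs-basis} identifies $C_D(\Omega)$ with the class of fixed points of this normal function; closure is continuity, and unboundedness in $\Omega$ follows by iterating $D$ in $\omega$ steps inside $\mathbb A$. For the successor step from $\beta$ to $\beta{+}1$ (with $\Omega \leq \beta < D(\Omega{+}1)$), I apply Proposition~\ref{prop:club-successor} to the club of limit points of $C_D(\beta) \cap \Omega$, which is contained in $C_D(\beta)$ by Corollary~\ref{cor:club-closed}. Since $\beta^*, (\beta{+}1)^* < \Omega$, a tail of such $\delta$ satisfies $\delta \geq \max\{\beta^*, (\beta{+}1)^*\}$; Proposition~\ref{prop:club-successor} gives $\delta \leq_1^D \beta[\delta]+1$, and Lemma~\ref{lem:fund-basic}(e) (with $\rho = \Omega$) identifies $\beta[\delta]+1$ with $(\beta{+}1)[\delta]$, so $\delta \in C_D(\beta{+}1)$. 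Closure of $C_D(\beta{+}1) \cap \Omega$ is another appeal to Corollary~\ref{cor:club-closed}.

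The limit step is the main obstacle. For a limit $\gamma \in [\Omega, D(\Omega{+}1))$, Proposition~\ref{prop:club-limit} reduces the task to showing that the diagonal set
\[
\Delta := \{\delta < \Omega : \delta \text{ limit},\ D(\delta) = \delta,\ \delta \geq \gamma^*,\ \delta \in F_D(\gamma,\eta) \text{ for all } \eta < \delta\}
\]
is unbounded in $\Omega$. Here Proposition~\ref{prop:clubs-Delta} is essential. The index set $\{\beta : \Omega \leq \beta < \gamma,\ \beta^* \leq \eta\}$ corresponds via the $\mathbb D$-representation to a subset of $\mathbb D$ whose members have all ordinal coordinates bounded by $\eta$, and it is a set in $\mathbb A$ by $\Delta$-separation. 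Combined with the $\Delta$-definability of $\overline C_D(\rho)$ and the inductive hypothesis, admissibility of $\mathbb A$ yields two standard club-preservation properties for $\Omega$: the intersection of a set-indexed, $\Delta$-definable family of clubs in $\Omega$ is a club, and the diagonal intersection of a $\Sigma$-definable $\Omega$-sequence of clubs is a club. Applying the first shows that each $F_D(\gamma, \eta) \cap \Omega$ is club, and the second, applied to the sequence $\eta \mapsto F_D(\gamma, \eta) \cap \Omega$, together with the trivial clubs encoding ``$\delta$ limit'', ``$D(\delta) = \delta$'', and ``$\delta \geq \gamma^*$'', yields that $\Delta$ is club in $\Omega$. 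Proposition~\ref{prop:club-limit} then places $\Delta$ inside $C_D(\gamma) \cap \Omega$, and Corollary~\ref{cor:club-closed} supplies closure. The delicate point, and where I expect the actual work to lie, is verifying the definability conditions carefully enough that admissibility of $\mathbb A$ truly delivers both club-preservation properties.
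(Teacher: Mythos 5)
Your proposal is correct and follows essentially the same route as the paper: the same transfinite induction from $\gamma=\Omega=D(\Omega)$ up to $D(\Omega+1)$, the same base case via Lemma~\ref{lem:clubs-basis}, the same successor step via Proposition~\ref{prop:club-successor} and Lemma~\ref{lem:fund-basic}(e), and the same limit step via Propositions~\ref{prop:club-limit} and~\ref{prop:clubs-Delta}. The only point worth flagging is that in the successor step you rely on ``the club of limit points of $C_D(\beta)\cap\Omega$'' being unbounded, which itself needs the $\Delta$-definability supplied by Proposition~\ref{prop:clubs-Delta} together with $\Sigma$-collection in $\mathbb A$ (just as in the limit step); the paper invokes this explicitly at the corresponding point, whereas you defer it to your closing remark, which otherwise correctly locates where the genuine admissibility work lies.
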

\begin{proof}
In the proof of Proposition~\ref{prop:clubs-Delta} we have seen that $\mathbb A$ is closed under the operation~$\alpha\mapsto D(\alpha)$. Due to the continuity of normal functions at limit ordinals, we can conclude $D(\Omega)=\Omega$. In particular, this yields~$\gamma\geq D(0)$ for any~$\gamma\geq\Omega$, which is needed to ensure that $\gamma^*$ and $C_D(\gamma)$ are defined (cf.~Definition~\ref{def:fund-seq}). From Corollary~\ref{cor:club-closed} we know that the set $C_D(\gamma)\cap\Omega$ is closed in~$\Omega$. To show that it is unbounded, we argue by induction from~$\gamma=\Omega=D(\Omega)$ up to arbitrary~\mbox{$\gamma<D(\Omega+1)$}. Let us point out that the induction statement is primitive recursive with the function $\eta_{\Omega+1}:D(\Omega+1)\to\overline D(\Omega+1)$ as parameter, by the proof of Proposition~\ref{prop:clubs-Delta}. In the base case of~$\gamma=\Omega=D(\Omega)$, we invoke Lemma~\ref{lem:clubs-basis} to get
\begin{equation*}
C_D(\Omega)\cap\Omega=\{\delta<\Omega\,|\,D(\delta)=\delta\}.
\end{equation*}
The usual argument that normal functions have arbitrarily large fixed points can be accommodated in our setting: Given an arbitrary~$\alpha_0<\Omega$, we set $\alpha_{n+1}:=D(\alpha_n)$ to get a function $\mathbb N\ni n\mapsto\alpha_n<\Omega$ that is $\Sigma$-definable in~$\mathbb A$. We take $D\in\mathbb A$ to entail $\mathbb N\in\mathbb A$ (as~$D$ is a functor with domain~$\nat$). By~$\Sigma$-collection in~$\mathbb A$ (cf.~\cite[Section~I.4.4]{barwise-admissible}) we then obtain $\alpha_\infty:=\sup\{\alpha_n\,|\,n\in\mathbb N\}<\Omega$. Unless we already have a fixed point $D(\alpha_n)=\alpha_n$ for some~$n\in\mathbb N$, the ordinal~$\alpha_\infty$ is a limit, so that we get
\begin{equation*}
D(\alpha_\infty)=\sup\{D(\alpha_n)\,|\,n\in\mathbb N\}=\sup\{\alpha_{n+1}\,|\,n\in\mathbb N\}=\alpha_\infty
\end{equation*}
by continuity of normal functions. For the successor step of our induction, we show
\begin{equation*}
\{\delta<\Omega\,|\,\text{$\delta\geq(\gamma+1)^*$ is a limit point of $C_D(\gamma)$}\}\subseteq C_D(\gamma+1)\cap\Omega.
\end{equation*}
Given any element~$\delta$ of the left side, Corollary~\ref{cor:club-closed} yields $\delta\in C_D(\gamma)$. We can apply Proposition~\ref{prop:club-successor} to get~$\delta\leq_1^D\gamma[\delta]+1$. By Lemma~\ref{lem:fund-basic} we have $\gamma[\delta]+1=(\gamma+1)[\delta]$, so that we obtain $\delta\in C_D(\gamma+1)$, as desired. Let us note that $\gamma+1<D(\Omega+1)$ entails $(\gamma+1)^*<\Omega$ (since~$\Omega$ is a limit). To complete the successor step of our induction, we construct arbitrarily large limit points of~$C_D(\gamma)\cap\Omega$, which is club by induction hypothesis. Crucially, Proposition~\ref{prop:clubs-Delta} ensures that $C_D(\gamma)\cap\Omega$ (which is equal to $\overline C_D(\rho)$ for the appropriate~$\rho\in\mathbb D$) is $\Delta$-definable in~$\mathbb A$. Once this is known, we can rely on the usual construction: Given an arbitrary start value~$\alpha_0<\Omega$, we inductively choose $\alpha_{n+1}>\alpha_n$ minimal with $\alpha_{n+1}\in C_D(\gamma)\cap\Omega$. Since this last set is $\Delta$-definable in~$\mathbb A$, we obtain a $\Sigma$-definable function $\mathbb N\ni\alpha\mapsto\alpha_n<\Omega$. As above, we get $\alpha_\infty:=\sup\{\alpha_n\,|\,n\in\mathbb N\}<\Omega$ by $\Sigma$-collection in~$\mathbb A$. The construction ensures that $\alpha_\infty$ is a limit point of~$C_D(\gamma)$. It remains to consider the case of a limit ordinal~$\gamma\in D(\Omega+1)\backslash(\Omega+1)$. By Proposition~\ref{prop:club-limit}, the set $C_D(\gamma)$ contains the intersection of the $\Delta$-definable club $\{\delta<\Omega\,|\,\delta=D(\delta)\geq\gamma^*\text{ is limit}\}$ with the set
\begin{equation}\label{eq:diag-intersect}
\{\delta<\Omega\,|\,\delta\in F_D(\gamma,\eta)\text{ for all }\eta<\delta\}.
\end{equation}
It suffices to show that the latter is a $\Delta$-definable club as well (since the intersection of two such clubs is easily seen to be club itself). Assume that $\gamma$ and $\rho\in\mathbb D$ are related as in Definition~\ref{def:club-admissible}, and let $\mathbb D\ni\pi\mapsto\pi^+\in\Omega$ be the function from the proof of Proposition~\ref{prop:clubs-Delta}. We then have
\begin{equation*}
F_D(\gamma,\eta)\cap\Omega=\bigcap\{\overline C_D(\pi)\,|\,\pi<_{\mathbb D}\rho\text{ and }\pi^+\leq\eta\}.
\end{equation*}
We write $\mathbb D(\eta):=\{\pi\in\mathbb D\,|\,\pi^+\leq\eta\}$ and observe
\begin{equation*}
\mathbb D(\eta)=\{\langle\sigma;\gamma_0,\dots,\gamma_{n-1}\rangle_D\,|\,(\sigma,n+1)\in\tr(D)\text{ and }\gamma_0<\ldots<\gamma_{n-1}<\eta\}.
\end{equation*}
Hence $\eta\mapsto\mathbb D(\eta)$ is a primitive recursive set function (with parameter~$D$), and in particular~$\Sigma$-definable and total in~$\mathbb A$. As $\delta\in F_D(\gamma,\eta)$ is equivalent to
\begin{equation*}
\forall \pi\in\mathbb D(\eta)\,[\pi<_{\mathbb D}\rho\to\delta\in\overline C_D(\pi)],
\end{equation*}
we can conclude that $\{(\delta,\eta)\in\Omega^2\,|\,\delta\in F_D(\gamma,\eta)\}\subseteq\mathbb A^2$ and the collection in~(\ref{eq:diag-intersect}) are $\Delta$-definable. Let us now show that each of the sets~$F_D(\gamma,\eta)\cap\Omega$ is club, by adapting the usual argument to our setting: From the induction hypothesis we know that $\overline C_D(\pi)$ is club for all $\pi<_{\mathbb D}\rho$. Starting with an arbitrary~$\alpha_0<\Omega$, we construct a $\Sigma$-definable function $\mathbb N\ni n\mapsto\alpha_n<\Omega$ as follows: In the step, consider the function
\begin{gather*}
f_n:\{\pi\in\mathbb D(\eta)\,|\,\pi<_{\mathbb D}\rho\}\to\Omega,\\
f_n(\pi):=\min\{\alpha>\alpha_n\,|\,\alpha\in\overline C_D(\pi)\}.
\end{gather*}
By $\Delta$-separation and $\Sigma$-replacement (see~\cite[Theorems~4.5 and~4.6]{barwise-admissible}) we get $f_n\in\mathbb A$. This allows us to set
\begin{equation*}
\alpha_{n+1}:=\sup\{f_n(\pi)\,|\,\pi\in\mathbb D(\eta)\text{ and }\pi<_{\mathbb D}\rho\}<\Omega.
\end{equation*}
Another application of $\Sigma$-collection yields $\alpha_\infty:=\sup\{\alpha_n\,|\,n\in\mathbb N\}<\Omega$. To see that $\alpha_\infty$ is a limit point (and hence an element) of each set~$\overline C_D(\pi)$, we consider an arbitrary ordinal $\alpha<\alpha_\infty$. We then have $\alpha\leq\alpha_n$ for some~$n\in\mathbb N$, so that we indeed get $\alpha<f_n(\pi)\in\overline C_D(\pi)\cap\alpha_{\infty}$. Finally, we deduce that the collection in~(\ref{eq:diag-intersect}) is club. As mentioned before, this collection is a diagonal intersection. The usual argument goes through in our setting: Start with an arbitrary~$\alpha_0<\Omega$. In the step, set
\begin{equation*}
\alpha_{n+1}:=\min\{\alpha<\Omega\,|\,\alpha>\alpha_n\text{ and }\alpha\in F_D(\gamma,\alpha_n)\}.
\end{equation*}
This is possible because $F_D(\gamma,\eta)\cap\Omega$ is club for~$\eta<\Omega$, as shown above. Since the relation $\alpha\in F_D(\gamma,\eta)$ is $\Delta$-definable, the resulting function $n\mapsto\alpha_n$ is \mbox{$\Sigma$-definable} in~$\mathbb A$. Once again we obtain $\alpha_\infty:=\sup\{\alpha_n\,|\,n\in\mathbb N\}<\Omega$. In order to see that $\alpha_\infty$ is contained in the diagonal intersection~(\ref{eq:diag-intersect}), we consider an arbitrary~$\eta<\alpha_\infty$. Pick an~$n\in\mathbb N$ with $\eta\leq\alpha_n$. For any~$N>n$ we get $\alpha_N\in F_D(\gamma,\alpha_{N-1})\subseteq F_D(\gamma,\eta)$, where the inclusion is an easy consequence of~$\eta\leq\alpha_{N-1}$. This shows that $\alpha_\infty$ is a limit point and hence an element of~$F_D(\gamma,\eta)$, as needed.
\end{proof}

It is straightforward to derive the following:

\begin{corollary}\label{cor:Omega-elementary}
Let~$D$ be a normal dilator. If $\Omega=\ord\cap\mathbb A$ is the ordinal height of an admissible set $\mathbb A\ni D$, then we have $\Omega\leq_1^D D(\Omega+1)$.
\end{corollary}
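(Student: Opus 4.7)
The plan is to verify the criterion of Definition~\ref{def:leq_1^D} for $\Omega \leq_1^D D(\Omega+1)$ directly, using the clubs produced by the theorem to build the required $\mathcal L_D$-isomorphisms. The global ingredient is the observation that for each $\gamma \in D(\Omega+1) \setminus \Omega$ the set $C_D(\gamma) \cap \Omega$ is unbounded in $\Omega$ by the theorem, so $\Omega$ is a limit point of $C_D(\gamma)$; since $\gamma^* < \Omega$ and $C_D(\gamma)$ is closed by Corollary~\ref{cor:club-closed}, this forces $\Omega \in C_D(\gamma)$. Using $\gamma[\Omega] = \gamma$ (immediate from Definition~\ref{def:fund-seq} when $\gamma > \Omega$, and from Lemma~\ref{lem:fund-basic}(f) when $\gamma = \Omega = D(\Omega)$), I obtain $\Omega \leq_1^D \gamma$ for every such $\gamma$. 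I will also use that $\Omega$ is a limit ordinal, since the height of an admissible set cannot be a successor.

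Given finite $X \subseteq \Omega$ and $Y \subseteq D(\Omega+1) \setminus \Omega$, I plan to adapt the construction from the proof of Proposition~\ref{prop:club-successor}. For each $x \in X$ with $x \not\leq_1^D \Omega$, fix witnessing finite sets $X_0 \subseteq x$ and $Y_0 \subseteq \Omega \setminus x$ and set $x' := \max(Y_0) + 1 < \Omega$; the same $X_0, Y_0$ then also witness $x \not\leq_1^D x'$. Collect these into a finite set $X' \subseteq \Omega$. Since $Y$ is finite and each $C_D(\gamma) \cap \Omega$ is club, the intersection $\bigcap_{\gamma \in Y} C_D(\gamma) \cap \Omega$ is club, so I can pick $\delta$ in it exceeding $\max(X \cup X')$ and all $\gamma^*$ for $\gamma \in Y$. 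Setting $\widetilde Y := \{\gamma[\delta] \mid \gamma \in Y\} \subseteq \Omega$ and defining $f : X \cup Y \to X \cup \widetilde Y$ by $f\!\restriction\!X = \operatorname{id}_X$ and $f(\gamma) := \gamma[\delta]$ gives the candidate $\mathcal L_D$-isomorphism.

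The routine verifications are preservation of $\leq$ (using Lemma~\ref{lem:fund-basic}(d) with $\rho = \Omega$ for pairs inside $Y$, and trivial otherwise) and preservation of the $\simeq$-relations (immediate from Definition~\ref{def:fund-seq} together with $\Omega[\delta] = D(\delta) = \delta$ via Lemma~\ref{lem:fund-basic}(f), noting that $\delta \in C_D(\gamma)$ forces $D(\delta) = \delta$). For $\leq_1^D$ between two elements of $Y$ I plan to reuse the uniqueness trick from Proposition~\ref{prop:club-successor}: for $\alpha < \beta$ both lying in $[D(\rho), D(\rho+1))$ with $\alpha > D(\rho)$, applying Definition~\ref{def:leq_1^D} to $\alpha \leq_1^D \beta$ with the parameters occurring in the representation of $\alpha$ manufactures a competitor to that representation, contradicting Proposition~\ref{prop:representations-exist-unique}. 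Applying this with $\rho = \Omega$ on the source side and $\rho = \delta$ on the image side reduces the equivalence $\gamma \leq_1^D \gamma' \iff \gamma[\delta] \leq_1^D \gamma'[\delta]$ (for $\gamma < \gamma'$ in $Y$) to the pair of facts $\Omega \leq_1^D \gamma'$ and $\delta \leq_1^D \gamma'[\delta]$, both of which hold by construction.

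The main obstacle I anticipate is the equivalence $x \leq_1^D \gamma \iff x \leq_1^D \gamma[\delta]$ for $x \in X$ and $\gamma \in Y$. The forward direction is immediate from downward absoluteness of $\leq_1^D$, because $\gamma[\delta] \leq \gamma$. For the converse I would dichotomise on whether $x \leq_1^D \Omega$: if it holds, transitivity of $\leq_1^D$ together with $\Omega \leq_1^D \gamma$ gives $x \leq_1^D \gamma$; if it fails, the witness $x' \in X'$ satisfies $x < x' < \delta \leq \gamma[\delta]$, so downward absoluteness applied to the hypothesis $x \leq_1^D \gamma[\delta]$ forces $x \leq_1^D x'$, contradicting the choice of $x'$. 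This is exactly where the auxiliary set $X'$ earns its keep, mirroring its role in the proof of Proposition~\ref{prop:club-successor}, and it is the subtlest step in the verification.
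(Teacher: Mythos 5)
Your argument is correct but takes a noticeably longer route than the paper. The paper's proof is four sentences: since $\leq_1^D$ passes to limits in the right argument (i.e.\ $\Omega\leq_1^D D(\Omega+1)$ follows once $\Omega\leq_1^D\gamma+1$ holds for every $\gamma$ with $D(\Omega)\leq\gamma<D(\Omega+1)$), it suffices to note that $\Omega$ is a limit point of $C_D(\gamma)$ (by the theorem), hence an element (by Corollary~\ref{cor:club-closed}), and then Proposition~\ref{prop:club-successor} together with $\gamma[\Omega]=\gamma$ from Lemma~\ref{lem:fund-basic}(b) gives $\Omega\leq_1^D\gamma+1$. You instead decline this reduction and verify the pattern condition for $\Omega\leq_1^D D(\Omega+1)$ head-on, which amounts to re-running the internals of the proof of Proposition~\ref{prop:club-successor} with $\delta$ ranging below $\Omega$ and $Y\subseteq D(\Omega+1)\setminus\Omega$: you build $X'$ from non-$\leq_1^D$ witnesses, choose $\delta$ in $\bigcap_{\gamma\in Y}C_D(\gamma)\cap\Omega$ above $\max(X\cup X')$ and all $\gamma^*$, map $\gamma\mapsto\gamma[\delta]$, and check $\leq$, $\leq_1^D$, and $\simeq$-preservation. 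All of these checks go through, and your treatment of the dichotomy on $x\leq_1^D\Omega$ for the mixed case is exactly the role $X'$ plays in Proposition~\ref{prop:club-successor}. The paper's route buys brevity and modularity by factoring the work through an already-proved lemma; yours is self-contained at the cost of duplicating that lemma's proof. One small imprecision worth fixing: you say you verify the criterion of Definition~\ref{def:leq_1^D}, but for the $\simeq$-relations you only establish the forward implication, not the full $\mathcal L_D$-isomorphism equivalence. You should invoke Proposition~\ref{prop:leq_1^D-Sigma_1-elem}(iii) instead, which is what the proof of Proposition~\ref{prop:club-successor} does for precisely this reason; alternatively, one can check that $X\cup Y$ can be taken closed so the implications upgrade, but that is extra work you do not need.
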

\begin{proof}
As in the previous proof we get $\Omega=D(\Omega)$. It suffices to show that we have $\Omega\leq_1^D\gamma+1$ whenever $D(\Omega)\leq\gamma<D(\Omega+1)$, no matter if $D(\Omega+1)$ is a limit or not. From the previous theorem we learn that $\Omega$ is a limit point of~$C_D(\gamma)$. By Corollary~\ref{cor:club-closed} this implies~$\Omega\in C_D(\gamma)$, so that Proposition~\ref{prop:club-successor} yields~$\Omega\leq_1^D\gamma[\Omega]+1$. To conclude, we note that $\gamma[\Omega]=\gamma$ holds by Lemma~\ref{lem:fund-basic}.
\end{proof}

Finally, we combine our results to establish the theorem from the introduction:

\begin{proof}[Proof of~Theorem~\ref{thm:pattern-admissibles}]
To show that~(i) implies~(ii), we make use of Theorem~\ref{thm:from-freund-equivalence} (originally proved in~\cite{freund-thesis,freund-equivalence,freund-categorical,freund-computable}). Aiming at statement~(i) of that theorem, we consider an arbitrary dilator~$D$. Form the normal dilator~$\Sigma D$ from Definition~\ref{def:Sigma-D}. By statement~(i) of  Theorem~\ref{thm:pattern-admissibles} we get an ordinal~$\Omega$ with~$\Omega\leq_1^{\Sigma D}\Sigma D(\Omega+1)$. From Theorem~\ref{thm:patterns-to-collapse} we learn that $\Omega$ is a Bachmann-Howard fixed point of~$D$, as needed to satisfy statement~(i) of Theorem~\ref{thm:from-freund-equivalence}. To establish the implication from~(ii) to~(i) in Theorem~\ref{thm:pattern-admissibles}, we consider a normal dilator~$D$. Invoking~(ii), we pick an admissible set~$\mathbb A\ni D$ (where $D$ denotes the set-sized object from Definition~\ref{def:pre-dilator}, rather than its class-sized extension due to Definition~\ref{def:dil-ordinals}). By Corollary~\ref{cor:Omega-elementary} we have $\Omega\leq_1^D D(\Omega+1)$ for~$\Omega=\ord\cap\mathbb A$, as required by statement~(i) of Theorem~\ref{thm:pattern-admissibles}.
\end{proof}

\bibliographystyle{amsplain}
\bibliography{Patterns_Bachmann-Howard}

\end{document}